\definecolor{cadmiumgreen}{rgb}{0.0, 0.42, 0.24}
\newtheorem{thm}{Theorem}[section]
\newtheorem{cor}[thm]{Corollary}
\newtheorem{lem}[thm]{Lemma}
\newtheorem{prop}[thm]{Proposition}
\theoremstyle{definition}
\newtheorem{defi}[thm]{Definition}
\newtheorem{exa}[thm]{Example}
\theoremstyle{remark}
\newtheorem{rmk}[thm]{Remark}
\newcommand{\RR}{\ensuremath{\mathbb{R}}}
\newcommand{\ZZ}{\ensuremath{\mathbb{Z}}}
\newcommand{\QQ}{\mathbb{Q}}
\newcommand{\NNplus}{{\mathbb{N}^+}}
\newcommand{\sA}{{\mathcal{A}}}
\newcommand{\sC}{\mathcal{C}}
\newcommand{\sD}{{\mathcal D}}
\newcommand{\sI}{{\mathcal I}}
\newcommand{\sK}{{\mathcal K}}
\newcommand{\sM}{{\mathcal M}}
\newcommand{\sP}{{\mathcal P}}
\newcommand{\sPdual}{{\mathcal P}^\ast}
\newcommand{\sQ}{{\mathcal Q}}
\newcommand{\sS}{{\mathcal S}}
\newcommand{\dd}{d}
\newcommand{\floor}[1]{\left\lfloor{#1}\right\rfloor}
\newcommand{\floorfrac}[2]{\floor{\frac{#1}{#2}}}
\newcommand{\angles}[1]{\langle {#1} \rangle}
\newcommand{\J}{{\Phi}} 
\newcommand{\AD}{{\,\preccurlyeq_{1}\,}}
\newcommand{\ADa}{{\,\preccurlyeq_{a}\,}}
\newcommand{\notAD}{{\,\not\preccurlyeq_{1}\,}}
\newcommand{\muAD}{{\mu_{1}}}
\newcommand{\muADa}{{\mu_{a}}}
\newcommand{\ADset}{\sQ}
\newcommand{\ADsetsmall}{\sQ^-}
\newcommand{\ADsetlarge}{\sQ^+}
\newcommand{\Dset}{\sD}
\newcommand{\ADmult}{\sM}
\newcommand{\cutset}[2]{\sK({#1},{#2})}
\newcommand{\divides}{\,\mid\,}
\newcommand{\Z}{{Z}}
\newcommand{\TC}{\mathrm{TC}} 
\numberwithin{equation}{section}
\begin{document}

\title{The floor quotient partial order}
\author{Jeffrey C. Lagarias}
\thanks{The research of the first author was supported by NSF grant
DMS-1701576}
\address{Dept.\ of Mathematics, University of Michigan, Ann Arbor MI 48109-1043} 
\email{lagarias@umich.edu}

\author{David Harry Richman}
\address{Dept.\ of Mathematics, University of Washington,
Seattle, WA  98195-4350}
\email{hrichman@uw.edu}

\subjclass[2020]{06A06, 11A05  (primary), 
06A07, 05A16, 11N80, 15B36, 39B72, 65G30, 97N20 (secondary)}

\keywords{partial order, floor function, M\"{o}bius function, incidence algebra}
\date{October 22 2023}

\begin{abstract}
A positive integer $d$ is a floor quotient of $n$ if there is a positive integer $k$ such that ${d = \floor{{n}/{k}}}$.
The floor quotient relation defines a partial order on the positive integers.
This paper studies the internal structure of this partial order and its M\"{o}bius function.
\end{abstract} 
\maketitle

\setcounter{tocdepth}{1}
\tableofcontents

\bibliographystyle{amsplain}

%
%
\section{Introduction}

Denote the positive integers $\NNplus = \{ 1, 2, 3, \ldots\}$.  
We call a partial order 
$\sP = (\NNplus, \preccurlyeq)$
on the positive integers  an
{\em approximate divisor order}  if:
\begin{enumerate}
\item
It refines the (multiplicative) divisor partial order $\sD =(\NNplus, \divides)$, i.e.
\[
\dd \divides n \qquad\Rightarrow\qquad \dd \preccurlyeq n,
\]
in which $\dd \divides n$  denotes $\dd$ divides $n$.
\item
It is refined by the additive total order $\sA = ( \NNplus, \le)$, i.e.
$$
\dd \preccurlyeq n \qquad\Rightarrow\qquad \dd \leq n .
$$
\end{enumerate}
Any such order is intermediate between the multiplicative order and the additive order. 

This paper studies an approximate divisor order, 
defined using  the floor function. 
The floor function $\floor{x}$ maps  a real number $x$ to the greatest integer no larger than $x$.

\begin{defi}
A positive integer $\dd$ is  a {\em floor quotient} of $n$, 
written $\dd \AD n$,  
if $\dd = \floorfrac{n}{k} $ for some positive  integer $k$.
\end{defi} 

We show that the binary relation $\AD$ on $\NNplus$ is an approximate divisor order, and study its structural properties,
with a particular emphasis on the behavior of its M\"{o}bius function.

%
%
\subsection{Floor quotient partial order}\label{subsec:10} 


The floor quotient relation $\dd \AD n$ asserts that an interval of length $n$ can be
 cut into $k$ equal pieces of length $\dd$ with 
a possible  extra piece, of length shorter than $k$, 
which is discarded. If the extra piece is absent, then $\dd$ divides $n$.
In computer science, the  operation $n \mapsto \floorfrac{n}{k}$ is called integer division or  floor division (by $k$) and the resulting value  $\dd$  is called a  floor quotient, see Koren \cite{Koren:02}.



\begin{thm}\label{thm:approx-order} 
 The floor quotient relation $ \AD $
 defines a partial order  $\ADset \coloneqq ( \NNplus, \AD)$.
 The partial order $\ADset$ is an approximate divisor order.
\end{thm}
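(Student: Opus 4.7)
The plan is to verify the three partial order axioms (reflexivity, antisymmetry, transitivity) for $\AD$ on $\NNplus$, then check the two approximate divisor order conditions.

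For reflexivity, I would take $k=1$ in the definition, giving $\floor{n/1} = n$, hence $n \AD n$. For antisymmetry, I would observe that $d \AD n$ forces $d = \floor{n/k} \le n/k \le n$ since $k \ge 1$; symmetrically, $n \AD d$ forces $n \le d$, so $d = n$. This same inequality also establishes condition (2) of the approximate divisor order (refined by $\le$). For condition (1), if $d \divides n$ then $n = dk$ for some $k \in \NNplus$, whence $d = n/k = \floor{n/k}$, so $d \AD n$.

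The heart of the argument is transitivity: if $d \AD e$ and $e \AD n$, write $d = \floor{e/j}$ and $e = \floor{n/k}$; I must produce an $m$ with $d = \floor{n/m}$. I would take $m = jk$, reducing the problem to the nested floor identity
\[
\floor{\tfrac{1}{j}\floor{\tfrac{n}{k}}} \;=\; \floor{\tfrac{n}{jk}}.
\]
The cleanest route is to apply the Euclidean division $n = qjk + r$ with $0 \le r < jk$, so that the right-hand side equals $q$. Writing $r = sk + t$ with $0 \le t < k$ (so $0 \le s < j$) gives $\floor{n/k} = qj + s$, and then $(qj+s)/j = q + s/j$ with $0 \le s/j < 1$, which yields $q$ on the left-hand side as well.

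I expect the only real obstacle to be this nested floor identity; once it is in hand, transitivity is immediate with $m = jk$, and every other verification reduces to a one-line check. The entire proof should fit in a few short paragraphs, with the nested floor identity either cited as standard or proved inline via Euclidean division as sketched above.
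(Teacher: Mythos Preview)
Your proposal is correct. The reflexivity, antisymmetry, and approximate-divisor-order verifications match the paper's essentially verbatim. The difference lies in transitivity: the paper first establishes the equivalence of the cutting property $d = \floor{n/k}$ with a \emph{covering property} (there exists $k$ with $k[d,d+1) \supseteq [n,n+1)$), and then transitivity is immediate because interval containment is transitive: from $[m,m+1) \subseteq k_1[d,d+1)$ and $[n,n+1) \subseteq k_2[m,m+1)$ one gets $[n,n+1) \subseteq k_1 k_2[d,d+1)$. You instead stay with the cutting definition and prove the nested floor identity $\lfloor \tfrac{1}{j}\lfloor \tfrac{n}{k}\rfloor\rfloor = \lfloor \tfrac{n}{jk}\rfloor$ directly via two applications of Euclidean division. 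Your route is self-contained and elementary, needing no auxiliary characterization; the paper's route is more geometric and is embedded in a broader result (six equivalent characterizations of $\AD$) that is reused elsewhere. The paper does record your identity separately as \eqref{eq:floor-dilation-commute} and attributes it to Cardinal, so either path is natural.
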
 

This result is proved as Theorem \ref{thm:approx-order2}, as  a consequence of a result    
showing the equivalence of six properties characterizing floor quotients (Theorem~\ref{thm:equiv-properties}), see Section~\ref{subsec:31}.  
The first two of these are: 
\begin{enumerate}
\item (Cutting property)  
 $\dd = \floor{ \frac{n}{k}}$ for some integer $k$. 

\item (Covering property)  
For the half-open unit interval $I_\dd= [\dd, \dd+1)$, there is some integer
$k$ such that the interval
$k I_\dd$ covers $I_n$,
i.e.
$ k  [\dd, \dd+1) \supseteq [n,n+1)$.
\end{enumerate}

The partial order axioms for $\sQ = (\NNplus, \AD\!)$ follow 
 easily from the covering property (2).
The cutting property (1) implies  that the {floor quotient} order refines the divisor partial order, 
and is refined by the additive total order.

The floor quotient order $\AD$ strictly refines the divisor order.
Figure~\ref{fig:11} gives the Hasse diagram of the floor quotient relation on all floor quotients of $16$,
which has extra vertices and extra edges compared with the Hasse diagram for divisors of $16$.
\begin{figure}[h]
\centering 
 \begin{tikzpicture}[xscale=1.2, yscale=0.8]
  \node (16) at (0,6) {$16$};
  \node (8) at (-1,5) {$8$};
  \node (5) at (1,4.5) {$5$};
  \node (4) at (-2,3) {$4$};
  \node (3) at (1,1.5) {$3$};
  \node (2) at (-1,1) {$2$};
  \node (1) at (0,0) {$1$};
  \draw (16) -- (8) -- (4) -- (2) -- (1);
  \draw (16) -- (5) -- (2);
  \draw[out=-10, in=60] (16) to (3);
  \draw (3) -- (1);
  \end{tikzpicture}
\caption{Hasse diagram for the interval $ \ADset[1,16] = \{ d : d \AD 16\}$.} 
\label{fig:11}  
\end{figure}
 
We introduce some terminology.  
If  $\dd = \floor{n/k}$ is a floor quotient of $n$, then 
we will call $n$ a {\em floor  multiple} of $d$; 
we will also call any associated value $k$ a {\em cutting length} of $(\dd,n)$.
There may be more than one cutting length  $k$ yielding a given floor quotient pair  $(\dd, n)$.

The first  object of this paper is to systematically describe the internal structure of the floor quotient poset.
 The  second object is to study the M\"{o}bius function of this poset.  It has a relation to the
 M\"{o}bius function of the divisor order, which is described by the classical number-theoretic M\"{o}bius function.

%
%
\subsection{Structural properties of floor quotient  poset}\label{subsec:12a} 
 
Structural aspects of the floor quotient poset concern 
the size and symmetries of  order intervals,
the behavior of the set of all floor multiples of an integer, which forms a numerical semigroup,
and a symmetry exchanging cutting length numbers and gap numbers between floor quotients in an initial interval. 
These aspects are studied in Section \ref{sec:structure}. 
 
The initial intervals $\ADset[1,n] = \{d : 1 \AD d \AD n\}$ of the floor quotient order have a structure in which 
both  additive structures and multiplicative structures are visible.  
In 2010 Cardinal~\cite[Proposition 4]{Cardinal:10} noted a self-duality property of the set  $\ADset[1,n]$ under an involution.
Cardinal showed that:
\begin{enumerate}[(1)]
\item
 The floor quotient initial interval
 $\ADset[1,n]$ is the union $\ADsetsmall(n) \bigcup \ADsetlarge(n)$,
where
\[
	\ADsetsmall(n) = \{1,2,3,\ldots, s \}
	\qquad\text{and}\qquad
	\ADsetlarge(n) 
	= \{ n,  \floor{\frac{n}{2} }, \floor{\frac{n}{3} }, \ldots, \floor{ \frac{n}{s} } \},
\]
letting $s = \floor{\sqrt{n}}$.
\item
One has
\[
	\ADsetsmall(n) \bigcap \ADsetlarge(n) = \begin{cases}
	\{s\} &\text{if } \,  s^2\le n < s(s+1), \\
	\emptyset &\text{if } \, s(s+1) \le  n < (s+1)^2.
	\end{cases}
\]
\item
The map $\J_n(k) = \floor{n/k}$ 
acts as an involution on the set $\ADset[1,n]$, which exchanges the sets
$\ADsetsmall(n)$ and $\ADsetlarge (n)$ bijectively. 
\end{enumerate}
Cardinal's result is proved here, for the sake of completeness, as Proposition \ref{prop:interval-complement}.

 The floor quotient order on the set $\ADsetsmall(n)$ is, by definition,  the floor quotient order restricted
to the {\em additive} interval  $\{1, \ldots, \floor{\sqrt{n}}\}$.
The following result, proved  as Theorem ~\ref{thm:upper-interval-struct}, 
shows that the  floor quotient order restricted to $\ADsetlarge(n)$ records the {\em multiplicative} structure of $\{1, \ldots, \floor{\sqrt{n}}\}$.

\begin{thm}
\label{thm:intro-upper-interval-struct}
For a positive integer $n$, let
$ \ADsetlarge(n) = \{ \floor{ n/k } \,:\, 1 \le k \le \floor{ \sqrt{n}} \} $.
For $1 \le j, \ell \le \floor{\sqrt{n}}$ we have
\begin{equation} 
\Big\lfloor{\frac{n}{j}}\Big\rfloor \AD \floor{\frac{n}{\ell}}
\qquad\text{if and only if}\qquad
\ell \divides j.
\end{equation}
That is: the map $k \mapsto \floor{n/k}$ 
defines an anti-isomorphism of posets from 
the set $\{1,2,\ldots, \floor{\sqrt{n}}\}$ equipped with the divisor order
to $\ADsetlarge(n)$  with the floor quotient order.
\end{thm}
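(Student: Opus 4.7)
The plan is to reduce the biconditional to a cancellation statement about fibers of the map $k \mapsto \floor{n/k}$ on a bounded range, with the iterated floor identity $\floor{\floor{x}/k} = \floor{x/k}$ (valid for $x \in \RR$, $k \in \NNplus$) doing the algebraic work. Bijectivity of $k \mapsto \floor{n/k}$ from $\{1,\ldots,s\}$ onto $\ADsetlarge(n)$, where $s = \floor{\sqrt n}$, is already granted by Cardinal's Proposition~\ref{prop:interval-complement}, so only the order-reversing property remains. Sufficiency is immediate: if $\ell \mid j$ with $j = \ell k$, applying the iterated floor identity with $x = n/\ell$ gives $\floor{n/j} = \floor{n/(\ell k)} = \floor{\floor{n/\ell}/k}$, which by definition exhibits $\floor{n/j} \AD \floor{n/\ell}$.

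For necessity, I would unwind $\floor{n/j} \AD \floor{n/\ell}$ into the existence of a positive integer $k$ with $\floor{n/j} = \floor{\floor{n/\ell}/k} = \floor{n/(\ell k)}$, once more by the identity. Setting $m = \ell k$, the conclusion $\ell \mid j$ reduces to the following \emph{cancellation property}: whenever $j \le s$ and $m \in \NNplus$ satisfy $\floor{n/j} = \floor{n/m}$, one must have $m = j$. I would establish this through two sub-claims. First, the map $k \mapsto \floor{n/k}$ is strictly decreasing on $\{1,\ldots,s\}$: for $1 \le k_1 < k_2 \le s$, the bounds $k_1 k_2 \le (s-1)s$ and $n \ge s^2$ force
\[
\frac{n}{k_1} - \frac{n}{k_2} \;=\; \frac{n(k_2-k_1)}{k_1 k_2} \;\ge\; \frac{n}{s(s-1)} \;>\; 1,
\]
hence $\floor{n/k_1} > \floor{n/k_2}$. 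Second, $\floor{n/s} > \floor{n/(s+1)}$, verified case-by-case via Cardinal's dichotomy: when $s^2 \le n < s(s+1)$ the two floors equal $s$ and $s-1$, while when $s(s+1) \le n < (s+1)^2$ the first floor is at least $s+1$ and the second is exactly $s$. Chaining these with the weak monotonicity of $k \mapsto \floor{n/k}$ yields $\floor{n/j} \ge \floor{n/s} > \floor{n/(s+1)} \ge \floor{n/m}$ whenever $j \le s < m$, so any $m$ equalizing the floors must lie in $\{1,\ldots,s\}$, and the strict decrease there then pins $m = j$.

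The main obstacle is the second sub-claim, $\floor{n/s} > \floor{n/(s+1)}$, because in the subcase $s^2 \le n < s(s+1)$ the gap between the two floors is exactly $1$, the minimum possible; this is the boundary regime where $\{s\} = \ADsetsmall(n) \cap \ADsetlarge(n)$ in Cardinal's proposition, so confirming that the floors still differ (rather than collapse) demands explicit tracking of both quantities. Beyond this, the argument is routine manipulation of the iterated floor identity.
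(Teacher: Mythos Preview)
Your argument is correct and follows the same overall architecture as the paper's proof of Theorem~\ref{thm:upper-interval-struct}: both directions rest on the iterated floor identity $\floor{\floor{n/\ell}/k} = \floor{n/(\ell k)}$, and the heart of the necessity direction is the \emph{cancellation property} that $\floor{n/j} = \floor{n/m}$ with $j \le s = \floor{\sqrt{n}}$ forces $m = j$---equivalently, that the cutting length set $\cutset{\floor{n/j}}{n}$ is the singleton $\{j\}$.

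The difference lies in how this singleton property is established. The paper invokes Lemma~\ref{lem:gap-multiplicity}(c), which deduces $K(d,n)=1$ for $d \in \ADsetlarge(n)$ from the gap--multiplicity interchange under the involution $\J_n$: since $\J_n(d) \in \ADsetsmall(n) = \{1,\ldots,s\}$ has gap $1$, the dual quantity $K(d,n)$ equals $1$. You instead prove the singleton property directly, via the explicit spacing bound $n/k_1 - n/k_2 \ge n/(s(s-1)) > 1$ on $\{1,\ldots,s\}$ together with the boundary check $\floor{n/s} > \floor{n/(s+1)}$. Your route is more self-contained and elementary---it does not require the involution machinery or Lemma~\ref{lem:gap-multiplicity}---while the paper's route situates the result within the broader duality framework of Section~\ref{subsec:gaps-cutting-mults}. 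Both are clean; yours would be preferable in a setting where Lemma~\ref{lem:gap-multiplicity} is not already available.
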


In Section~\ref{sec:2} we compare other aspects of the floor quotient order with the additive and multiplicative order.
We note two features. 
\begin{enumerate}
\item[(1)]
 The involution $\J_n$ acting on $\ADset[1,n]$
is order-reversing for the additive order.  
Consequently  the sets  $\ADsetsmall(n)$
and $\ADsetlarge(n)$ are reverse-order isomorphic for the additive
partial order.
The map  $\J_n$ is not order-reversing for the floor quotient order
and symmetry between the  floor quotient order structures on $\ADsetsmall(n)$
and $\ADsetlarge(n)$ is  broken. 

\item[(2)]
The multiplicative  divisor order on $\NNplus$ has a {scaling symmetry}, 
in the sense that the divisor intervals $\sD[d, n]$ and $\sD[ad, an]$ 
are order-isomorphic
under the linear map $x \to ax$, for any integer $a\ge 1$. 
The floor quotient order breaks
this  scaling symmetry; we quantify deviations from it in Section \ref{sec:interval-size}.
However, the family of floor quotient intervals $\ADset[ad, an]$ as $a$ varies does stabilize for sufficiently large $a$, thus having a ``limit scaling symmetry.''
\end{enumerate}

%
%
\subsection{M\"{o}bius function of the floor quotient partial order}
\label{subsec:12} 
The classical M\"{o}bius function $\mu$ on $\NNplus$ is given by
 \[
\mu(n) = \begin{cases} 
(-1)^k  &\text{if $n = p_1 p_2 \cdots p_k$ is squarefree, where $p_i$ are prime}, \\
0 &\text{if $n$ is not squarefree}.
\end{cases}
\]
It satisfies the fundamental identity
\begin{equation}
\label{eq:classical-mu-sum}
\sum_{d \divides n} \mu(d) = 0 \qquad\text{if } n \geq 2.
\end{equation}

For any locally finite partial order one can define a two variable M\"{o}bius function. 
Restricting to $\mathcal P = (\NNplus, \preccurlyeq)$ defined on the positive integers, 
the two variable M\"{o}bius function $\mu_{\sP} : \NNplus \times \NNplus \to \ZZ$ 
is defined by the relations (i) $\mu_{\sP}(d, d) = 1$ for all $d$, (ii) $\mu_{\sP}(d,n) = 0$ if $d \not\preccurlyeq n$, and (iii) $\sum_{d \preceq {e} \preceq n} \mu_{\sP}(d, e) = 0$ for all $d, n$.
For the divisor partial order
  $\sD = (\NNplus, \divides )$,
the   associated two-variable  M\"{o}bius function $\mu_{\sD}$ satisfies
\[
	\mu_{\sD}(m, n) = 
	\begin{cases}
	\mu(\frac{n}{m}) &\text{if }m \divides n, \\
	0 &\text{otherwise}.
	\end{cases}
\]

We let $\muAD$ denote the two-variable  M\"{o}bius function of the floor quotient order $\ADset= (\NNplus, \AD)$.
While $|\mu_\sD(m,n)| \leq 1$ for the divisor poset $\sD$, 
the  floor quotient M\"{o}bius values $\muAD(m, n)$  
do not appear to be bounded in magnitude, see Figures~\ref{fig:mobius-10e4} and \ref{fig:mobius-10e6}.

On part of its domain, namely for all 
$(d, n)$ with $d \geq \sqrt{n}$, 
the floor quotient M\"{o}bius function has a precise connection to the divisor M\"{o}bius function,
and on this domain $|\muAD(d, n)| \leq 1$ holds.
The following theorem is a consequence of Theorem \ref{thm:intro-upper-interval-struct}.

\begin{thm}
\label{thm:upper-interval-mobius2} 
Let $\muAD$ denote the two-variable M\"{o}bius function of the floor quotient poset.

\begin{enumerate}[(a)]
\item For any integer $k$ with $1 \leq  k   \leq {\sqrt{n}}$,
the function $\muAD$ satisfies
\begin{equation}
\label{eq:upper-mobius} 
\muAD \left(\floorfrac{n}{k} , {n} \right) = \mu(k)
 \end{equation}
 where $\mu(\cdot)$ is the classical M\"{o}bius function.  

\item For positive integers $d,n$ with $d \geq \sqrt{n}$,  
we have 
$|\muAD(d, n)| \leq 1.$

\end{enumerate}
\end{thm}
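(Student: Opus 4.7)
The proof plan leverages Theorem~\ref{thm:intro-upper-interval-struct}, which provides an anti-isomorphism $j \mapsto \floor{n/j}$ from the divisor poset on $\{1, \ldots, \floor{\sqrt{n}}\}$ onto $(\ADsetlarge(n), \AD)$.

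For part (a), I would identify the floor-quotient order interval $\ADset[\floor{n/k}, n]$ with the divisor interval $\sD[1, k]$ on the divisors of $k$. First, I would show this interval lies entirely in $\ADsetlarge(n)$: any $e$ with $\floor{n/k} \AD e \AD n$ satisfies $e \ge \floor{n/k} \ge \floor{\sqrt{n}}$ (using $k \le \sqrt{n}$), and Cardinal's decomposition $\ADset[1,n] = \ADsetsmall(n) \cup \ADsetlarge(n)$ (Proposition~\ref{prop:interval-complement}), together with $\ADsetsmall(n) \subseteq \{1, \ldots, \floor{\sqrt{n}}\}$, forces $e \in \ADsetlarge(n)$. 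Theorem~\ref{thm:intro-upper-interval-struct} then identifies the interval as $\{\floor{n/j} : j \mid k\}$ with its order corresponding (via $\floor{n/j} \leftrightarrow j$) to divisibility on the divisors of $k$. Applying the general fact that an anti-isomorphism of finite posets sends M\"obius values to M\"obius values with the endpoints swapped, I conclude $\muAD(\floor{n/k}, n) = \mu_{\sD}(1, k) = \mu(k)$.

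Part (b) follows quickly from part (a). If $d \notAD n$ then $\muAD(d, n) = 0$ by definition. Otherwise $d = \floor{n/k}$ for some positive integer $k$, and the hypothesis $d \ge \sqrt{n}$ forces $n/k \ge d \ge \sqrt{n}$, hence $k \le \sqrt{n}$. Part (a) then yields $|\muAD(d, n)| = |\mu(k)| \le 1$ by the classical bound on the M\"obius function.

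The main obstacle is the bookkeeping in part (a): one must carefully confirm that the interval $\ADset[\floor{n/k}, n]$, computed inside the full poset $\ADset$, coincides as a poset with the structure Theorem~\ref{thm:intro-upper-interval-struct} describes for $\ADsetlarge(n)$, so that no elements of $\ADsetsmall(n)$ sneak in and the restricted order relations match up. The additive refinement property of $\AD$, together with Cardinal's dichotomy, handles this step cleanly.
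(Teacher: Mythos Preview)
Your proposal is correct and follows essentially the same route as the paper. The paper packages part (a) as the special case $\ell=1$ of a slightly more general statement (Theorem~\ref{thm:upper-interval-mobius}, giving $\muAD(\floor{n/k},\floor{n/\ell})=\mu(k/\ell)$ for $1\le \ell\le k\le\sqrt{n}$), but the engine is identical: the anti-isomorphism of Theorem~\ref{thm:intro-upper-interval-struct} together with the general fact that M\"obius values are preserved (with endpoints swapped) under poset anti-isomorphism. Your explicit verification that the interval $\ADset[\floor{n/k},n]$ lies inside $\ADsetlarge(n)$ is a point the paper leaves implicit, and your derivation of part (b) from part (a) is exactly Corollary~\ref{cor:63}.
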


This theorem combines results given in Theorem \ref{thm:upper-interval-mobius} and Corollary~\ref{cor:63}.
The  results in Theorem \ref{thm:upper-interval-mobius2} cover the range $d \geq \sqrt{n}$,
which comprises the ``multiplicative'' part of the floor quotient poset.
When $d \leq \sqrt{n}$, 
the floor quotient M\"{o}bius function  $\muAD(d,n)$
behaves quite differently than the divisor M\"{o}bius function $\mu_\sD$.

The next result shows that the asymptotic growth of $\muAD(d,n)$ is at most polynomial as a function of the ratio ${n}/{d}$.
We note that the stated bound holds for all $d$ and $n$, and does not involve limits.
\begin{thm}
\label{thm:intro-mobius-bound} 
Let $\muAD: \NNplus \times \NNplus \to \ZZ$ 
denote the M\"{o}bius function of the  floor quotient order. 
Let $\alpha_0 \approx 1.729$ denote the unique positive real given by $\zeta(\alpha_0)=2$,
where $\zeta(s)$ is the Riemann zeta function.
Then the following upper bound holds:
\begin{equation}
\label{eq:intro-mobius-bound}
  |\muAD(d,n)| \leq  \left(\frac{n}{d}\right)^{\alpha_0} 
  \qquad \text{for all } d,n\in \NNplus .
\end{equation}
\end{thm}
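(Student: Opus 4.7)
The plan is to establish the bound by strong induction on $n$ with $d$ fixed. If $d \notAD n$ then $\muAD(d,n) = 0$ and there is nothing to prove, so assume $d \AD n$. The base case $n = d$ gives $|\muAD(d,d)| = 1 = (d/d)^{\alpha_0}$. For the inductive step with $n > d$, the defining recurrence for the two-variable M\"obius function yields
\[
  \muAD(d,n) \;=\; -\sum_{\substack{e \in \ADset[d,n]\\ e \neq n}} \muAD(d,e).
\]
Applying the triangle inequality together with the inductive hypothesis $|\muAD(d,e)| \leq (e/d)^{\alpha_0}$ for all $e < n$ (this holds vacuously when $d \notAD e$, since then $\muAD(d,e)=0$), and then enlarging the index set by dropping the constraint $d \AD e$, reduces the problem to the key estimate
\[
  \sum_{\substack{e \AD n \\ e \neq n}} e^{\alpha_0} \;\leq\; n^{\alpha_0}.
\]

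For this inequality I would overcount: every floor quotient $e$ of $n$ with $e \neq n$ can be written as $e = \floor{n/k}$ for some integer $k \geq 2$ (since $k=1$ produces only $e = n$), so each distinct value on the left contributes at least once to the indexed sum $\sum_{k \geq 2} \floor{n/k}^{\alpha_0}$. Using $\floor{n/k} \leq n/k$ and the definition of the Riemann zeta function,
\[
  \sum_{\substack{e \AD n \\ e \neq n}} e^{\alpha_0}
  \;\leq\; \sum_{k=2}^{\infty} \floor{n/k}^{\alpha_0}
  \;\leq\; n^{\alpha_0}\sum_{k=2}^{\infty} k^{-\alpha_0}
  \;=\; n^{\alpha_0}\bigl(\zeta(\alpha_0) - 1\bigr) \;=\; n^{\alpha_0},
\]
where the final equality invokes the defining property $\zeta(\alpha_0) = 2$. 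Dividing by $d^{\alpha_0}$ closes the induction.

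The step that must be chosen correctly is the exponent $\alpha_0$ itself: the induction goes through for any $\alpha$ satisfying $\zeta(\alpha) \leq 2$, and $\alpha_0$ is exactly the critical value at which this just holds. The main ``obstacle'' is therefore conceptual rather than technical; recognizing that the natural overcount of the distinct floor quotients of $n$ by the family $\{\floor{n/k}\}_{k \geq 2}$ produces a Riemann zeta sum is what identifies $\alpha_0$ as the best exponent this direct induction can yield. Any sharper bound would require exploiting cancellation in the M\"obius recurrence or a finer enumeration of the floor-quotient set.
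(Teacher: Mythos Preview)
Your proof is correct. The inductive argument and the use of $\zeta(\alpha_0)=2$ are exactly the mechanism the paper relies on, but you apply it more directly than the paper does.

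The paper's route is two-step: it first invokes Philip Hall's theorem to obtain $|\muAD(d,n)| \le \TC(d,n)$, where $\TC(d,n)$ is the total number of chains in $\ADset[d,n]$, and then separately proves (Theorem~\ref{thm:chain-upper-bound}) that $\TC(d,n) \le (n/d)^{\alpha_0}$ by the same induction you carry out, using the recursion $\TC(d,n) \le \sum_{k\ge 2}\TC(d,\floor{n/k})$. Your argument collapses these two steps: you apply the same induction directly to $|\muAD(d,n)|$ via the defining M\"obius recurrence and the triangle inequality, bypassing chains and Philip Hall entirely. What the paper's approach buys is the stronger intermediate statement that the \emph{chain count} itself is bounded by $(n/d)^{\alpha_0}$, which is of independent interest and is sharp (Remark~\ref{rmk:312}); your approach is shorter and more self-contained for the M\"obius bound alone.
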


\noindent 
This result  is proved
as Theorem \ref{thm:intro-mobius-bound2} in Section \ref{subsec:mobius-bounds}.
We prove the bound \eqref{eq:intro-mobius-bound} by first bounding the total number of chains
in the interval $\ADset[1,n]$, given in Section \ref{subsec:count-chains}.
Experimental data, given in Section \ref{subsec:mobius-data}, suggests that the asymptotic growth of $|\muAD(1,n)|$ is at least
polynomial, 
and that $ |\muAD(1,n)| > n^{0.6}$ for infinitely many positive integers $n$.

We remark that the constant $\alpha_0 \approx 1.729$ 
appeared earlier in the work of Kalm\'{a}r~\cite{Kalmar:1931} and Klazar--Luca~\cite{KlazarL:07},
who studied the asymptotic growth of the number of chains in divisor order intervals $\sD[1,n]$.
Chains in $\sD[1,n]$ are in bijection with ``ordered factorizations'' of $n$, and with ``perfect partitions'' of $n$.

The next result concerns sign changes in the floor quotient M\"{o}bius values $\muAD(1,n)$.
%
%
\begin{thm}
\label{thm:intro-mobius-sign-change0} 
There exists an infinite sequence of integers 
$\ell_1=2< \ell_2< \ell_3 < \cdots$ 
such that for each $j \ge 1$ the floor quotient M\"{o}bius function has
\begin{equation}
(-1)^j  \muAD(1, \ell_j) > 0, 
\end{equation}
with $\ell_j$ satisfying the bounds 
\begin{equation}
\label{eqn:sgn-bound} 
\ell_{j+1} \le 2\ell_j^2-2. 
\end{equation} 
Consequently the 
function $\muAD(n)= \muAD(1,n)$ has  infinitely many sign changes as $n$ varies.
\end{thm}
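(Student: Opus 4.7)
The plan is to induct on $j$. The base case $\ell_1 = 2$ is verified immediately by evaluating the defining Möbius relation at $n = 2$, which gives $\muAD(1,1) + \muAD(1,2) = 0$ and hence $\muAD(1,2) = -1$, so $(-1)^1 \muAD(1,\ell_1) = 1 > 0$.

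For the inductive step, let $\ell = \ell_j$ with $(-1)^j \muAD(1,\ell) > 0$ and set $N = 2\ell^2 - 2$. The division identity $N = (2\ell - 1)\ell + (\ell - 2)$ with $0 \le \ell-2 < 2\ell-1$ shows $\ell = \lfloor N/(2\ell-1)\rfloor$, hence $\ell \AD N$. Without loss of generality assume $\muAD(1,\ell) > 0$ (the other case is symmetric). I would argue by contradiction: suppose $\muAD(1,n) \ge 0$ for every $n \in (\ell, N]$, and derive that $\muAD(1,\ell) \le 0$.

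The main device is to sum the Möbius defining relation $\sum_{d \AD n}\muAD(1,d) = 0$ (valid for $n \ge 2$) over every $n$ in the set $T := \{n \in [1,N] : \ell \AD n\}$ of floor multiples of $\ell$ in $[1,N]$. Swapping the order of summation and using transitivity (any $d \AD \ell$ satisfies $d \AD n$ for each $n \in T$), the contributions from $d \AD \ell$ consolidate into $|T| \cdot \sum_{d \AD \ell}\muAD(1,d) = 0$, which vanishes. What remains is the identity
\[
\sum_{d \not\AD \ell}\muAD(1,d)\,G_d = 0, \qquad G_d := |\{n \in T : d \AD n\}|.
\]
Splitting this sum by the sign of $d - \ell$: the terms with $d > \ell$ are non-negative under the contradiction hypothesis (there $\muAD(1,d) \ge 0$ and $G_d \ge 0$), so the "exceptional small" contribution must satisfy $\sum_{d < \ell,\, d \not\AD \ell}\muAD(1,d)\,G_d \le 0$. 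Combining this with the Möbius expansion $\muAD(1,\ell) = -\sum_{d \AD \ell,\, d \ne \ell}\muAD(1,d)$ is meant to force $\muAD(1,\ell) \le 0$, the desired contradiction. The bound $N = 2\ell^2 - 2$ is tight because it is the smallest $N$ for which $\ell$ appears as a floor quotient of $N$ with cutting length as large as $2\ell - 1$, which makes $T$ rich enough to drive the balance argument.

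The principal obstacle will be the quantitative control of the exceptional sum $\sum_{d < \ell,\, d \not\AD \ell}\muAD(1,d)\,G_d$: the Möbius values $\muAD(1,d)$ for $d < \ell$ with $d \not\AD \ell$ are not pinned down by the induction hypothesis, yet they contribute to the identity. Closing the bound requires detailed combinatorial analysis of which small integers $d$ arise as floor quotients of floor multiples of $\ell$ in $[1,N]$, together with sharp estimates on the weights $G_d$. This step is the technical heart of the proof and is where the bulk of the work lies; everything else is either bookkeeping or a direct consequence of the structural results already established in the preceding sections.
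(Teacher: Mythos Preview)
Your approach has a genuine gap, and the paper's argument avoids it entirely by a much simpler device.

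First, the gap. After your summation over $T$ and the cancellation of the $d\AD\ell$ block, you obtain
\[
\sum_{d\not\AD\ell}\muAD(1,d)\,G_d=0,
\]
and under the contradiction hypothesis you deduce $\sum_{d<\ell,\ d\not\AD\ell}\muAD(1,d)\,G_d\le 0$. But this inequality carries no information about $\muAD(1,\ell)$: that value was absorbed into the $d\AD\ell$ block, which already summed to zero and was discarded. Your sentence ``Combining this with the M\"obius expansion $\muAD(1,\ell)=-\sum_{d\AD\ell,\,d\ne\ell}\muAD(1,d)$ is meant to force $\muAD(1,\ell)\le 0$'' is not a deduction; the two displayed relations involve disjoint index sets and there is no evident bridge between them. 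You correctly identify this as the ``principal obstacle,'' but it is not a technicality to be filled in later---it is the entire content of the theorem, and your framework gives no mechanism for handling the values $\muAD(1,d)$ with $d<\ell$, $d\not\AD\ell$, about which the induction hypothesis says nothing.

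The paper sidesteps the problem completely. Rather than summing the M\"obius relation over a family of floor multiples, it subtracts the single relation at $n=\ell^2-1$ from the single relation at $2n=2\ell^2-2$. Since $\ADset[1,\ell^2-1]\subset\ADset[1,2\ell^2-2]$, the difference involves only indices $m\in\ADset[1,2\ell^2-2]\smallsetminus\ADset[1,\ell^2-1]$. The key arithmetic observation is that any such $m$ satisfies $m>\sqrt{\ell^2-1}$, hence $m\ge\ell$; moreover $\ell$ itself belongs to this difference set because $\ell\notAD\ell^2-1$ (the remainder in $\ell^2-1=\ell(\ell-1)+(\ell-1)$ is not strictly less than the quotient). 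One obtains the clean identity
\[
\muAD(1,\ell)=-\sum_{\substack{\ell<m\le 2\ell^2-2\\ m\AD 2\ell^2-2,\ m\notAD \ell^2-1}}\muAD(1,m),
\]
in which every index on the right lies strictly in $(\ell,\,2\ell^2-2]$. The induction is then immediate: if the left side is nonzero, some term on the right must have the opposite sign. No ``exceptional small'' terms ever appear.
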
 
This result is proved as Theorem \ref{thm:mobius-sign-change} in Section \ref{subsec:64}.
To prove it
we use a recursion derived from a cancellation between  $\muAD(1, 2n)$ and $\muAD(1,n)$,
 see Proposition \ref{prop:mu-recursion0}.
The bound \eqref{eqn:sgn-bound} only guarantees a sign change over a distance which grows quadratically. 
Empirical  data indicates 
$\muAD(1,n)$  
does keep constant sign for very long intervals, see the numerical data in Section \ref{subsec:mobius-data}. 
This behavior is in sharp contrast with the sign changes of the classical M\"{o}bius function $\mu(n)$. 

A new feature of the floor quotient M\"{o}bius function is
the existence of   a second  recursion for computing some M\"{o}bius function values,
which arises from its relation
to the additive total order.   
It concerns  the  differenced M\"{o}bius function 
\[
  \Delta\muAD(1,n) \coloneqq \muAD(1,n)-\muAD(1, n-1).
\]
The differenced values $\Delta\muAD(1,n)$ satisfy a recursion of a novel type, formulated below,
which feeds back integrated values $\muAD(1,s)$ at special values $n$ of form $n=s^2$ and $n = s(s+1)$.
\begin{thm}
\label{thm:intro-mobius-recursion}
For $n\geq 3$, the differenced M\"{o}bius function satisfies the recursion 
\begin{equation} 
\label{eq:intro-diff-mu-sum} 
 \sum_{\substack{ d \divides n \\  d > \sqrt{n} }} \Delta \muAD(1,d) = \begin{cases}
\displaystyle
0 & \text{if }n \ne s^2\,\, \mbox{or}\,\, s(s+1),\\
\displaystyle
  - \muAD(1,s) & \text{if }n = s^2 \text{ or } s(s+1) .
 \end{cases}
\end{equation}
Note that $s = \floor{\sqrt{n}}$ when $n=s^2$ or $s(s+1)$.
\end{thm}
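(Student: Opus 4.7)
The plan is to derive the recursion by invoking the defining M\"{o}bius identity $\sum_{d \AD m} \muAD(1, d) = 0$ (valid for all $m \geq 2$) at $m = n$ and at $m = n - 1$, and subtracting. Since $n \geq 3$ guarantees $n - 1 \geq 2$, both instances are available; the subtraction yields
\[
  \sum_{d \,\in\, \ADset[1,n]\setminus \ADset[1,n-1]} \muAD(1,d) \;=\; \sum_{d \,\in\, \ADset[1,n-1]\setminus \ADset[1,n]} \muAD(1,d),
\]
so the proof reduces to describing how the floor quotient interval changes when $n$ is decremented by one.

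The main technical step is to establish the pair of set-theoretic identities
\begin{align*}
  \ADset[1,n] \setminus \ADset[1,n-1] &= \{\, d \in \NNplus : d \divides n \text{ and } d \geq \sqrt{n}\,\}, \\
  \ADset[1,n-1] \setminus \ADset[1,n] &= \{\, d \in \NNplus : (d+1) \divides n \text{ and } d > \sqrt{n}\,\}.
\end{align*}
To prove the first, I would fix $d$ and examine representations $n = kd + j$ with $0 \leq j < k$ that witness $d \AD n$. Any such representation yields $d \AD (n-1)$ via the pair $(k, j-1)$ whenever $j \geq 1$; hence $d \AD n$ together with $d \NAD (n-1)$ forces every representation to have $j = 0$, equivalent to $d \divides n$. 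A direct calculation, writing $n = md$, then shows that an alternative representation with $j \geq 1$ (obtained by decreasing $k$) exists precisely when $m \geq d + 1$, i.e., $d < \sqrt{n}$. The second identity is proved by the dual analysis: lifting a representation $n - 1 = kd + j$ to $n = kd + (j+1)$ fails to witness $d \AD n$ only when $j + 1 = k$, which forces $n = k(d+1)$, and this obstruction is not circumvented by any alternative cutting length exactly when $k < d$, which translates to $d > \sqrt{n}$. This combinatorial verification of the symmetric difference is where the main effort of the proof lies.

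Once these descriptions are in place, the remainder is bookkeeping. Writing $s = \floor{\sqrt{n}}$, the left-hand sum of the subtracted M\"{o}bius identity differs from $\sum_{d \divides n,\, d > \sqrt{n}} \muAD(1,d)$ by the extra boundary term $\muAD(1, s)$ in the case $n = s^2$ (where $d = s$ satisfies $d = \sqrt{n}$). Substituting $e = d + 1$ on the right-hand side shows it differs from $\sum_{d \divides n,\, d > \sqrt{n}} \muAD(1, d - 1)$ by omitting the term at $e = s + 1$, whose contribution $\muAD(1, s)$ appears precisely when $(s + 1) \divides n$; and since $(s+1) \divides n$ forces $n = s(s+1)$ as the unique multiple of $s + 1$ lying in $[s^2, (s+1)^2)$, rearranging the subtracted identity yields
\[
  \sum_{\substack{d \divides n \\ d > \sqrt{n}}} \Delta \muAD(1, d)
  \;=\;
  \begin{cases}
    -\muAD(1, s) & \text{if } n = s^2 \text{ or } n = s(s+1), \\
    0 & \text{otherwise},
  \end{cases}
\]
as claimed (using that $s^2$ and $s(s+1)$ are distinct values of $n$). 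The principal obstacle is the symmetric-difference lemma; once it is in hand, the subsequent algebra is routine.
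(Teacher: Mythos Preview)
Your proposal is correct and follows essentially the same approach as the paper: both difference the M\"{o}bius identity $\sum_{d \AD m} \muAD(1,d) = 0$ at $m = n$ and $m = n-1$, and then identify which floor quotients enter or leave. The only organizational difference is that the paper splits $\ADset[1,n]$ into $\ADsetsmall(n)\cup\ADsetlarge(n)$, parametrizes the large part by the cutting index $k$, and uses $\floor{n/k}-\floor{(n-1)/k}=[k\mid n]$ to detect changes (handling the boundary cases $n=s^2$ and $n=s(s+1)$ separately), whereas you compute the symmetric difference $\ADset[1,n]\,\triangle\,\ADset[1,n-1]$ directly from representations $n=kd+j$; your set identities are exactly the content of the paper's Lemma~\ref{lem:consecutive-interval-sets}, and the subsequent bookkeeping is the same.
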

This result is proved as Theorem \ref{thm:mobius-recursion} in Section \ref{subsec:diff-mobius}.
Using the recursion \eqref{eq:intro-diff-mu-sum} we show the  function $\Delta\muAD(1,n)$ takes many small values, and vanishes on 
a set of natural density $\approx 0.405$ (Proposition~\ref{th:65}). 
This recursion provides a potential approach to prove upper bounds on the growth rate of $\muAD(1,n)$. 
The analysis becomes  complicated, 
and we present only partial results.

There are many simple to state, unresolved problems about the behavior of the M\"{o}bius function $\muAD$,
see Section \ref{sec:concluding}. 

%
%
\subsection{Prior work}\label{subsec:14} 

This work was motivated  by 
work of  J.-P. Cardinal \cite{Cardinal:10}.
The main topic of Cardinal's paper is
the construction of a commutative algebra $\mathbf{\sA} = \mathbf{\sA}_n$ of integer matrices, 
 for each $n \ge 1$,  
whose  rows and columns  are indexed by the
floor quotients of $n$, which form  the initial interval $\ADset[1,n]$ of the floor quotient partial order. 
Cardinal  showed that a certain matrix  $\sM = \sM_n$ of the algebra $\sA_n$
 has explicit connections with the classical Mertens function  $M(n) = \sum_{j=1}^n \mu(j)$, 
 and he related asymptotic behavior of the norms $||\sM_n||$ of these matrices to the Riemann hypothesis \cite[Theorem 24]{Cardinal:10}. 
 The algebra ${\sA}_n$ can be identified with a subalgebra of the (noncommutative)
  incidence algebra $\sI(\sQ[1,n])$ of the initial interval 
 $\sQ[1,n]$ of the
 floor quotient partial order,  see Section \ref{subsec:82}.

 Cardinal proved  commutativity of the algebra $\mathbf{\sA}_n$ using 
a floor function identity which states the commutativity of 
certain dilated floor functions acting on the real line  
\cite[Lemma 6]{Cardinal:10}:
for positive integers $k, \ell $ one has 
the identities
\begin{equation}
\label{eq:floor-dilation-commute}
\floor{ \frac{1}{k} \floorfrac{n}{\ell} } 
= \floor{ \frac{1}{\ell} \floorfrac{n}{k} } 
= \floorfrac{n}{k\ell} \qquad\text{for all $n$}.
\end{equation}
These identities are the basis of the existence of the  floor quotient partial order;
 they imply  the transitivity property of the floor quotient relation. 
In \cite{LMR:16}, the authors together with T. Murayama classified  pairs of dilated floor functions $(\floor{\alpha x} , \floor{\beta x})$
that commute under composition as functions on $\RR$.
They showed that the identities \eqref{eq:floor-dilation-commute} are the only nontrivial cases
where the commutativity property of dilated floor functions occurs.
(The trivial cases are
(i) $\alpha=\beta$, 
(ii) $\alpha=0$,
and (iii) $\beta=0$.)

In the course of his analysis, 
Cardinal proved various essential properties of floor quotients,
in particular observing that $d \mapsto \bar{d} = \floor{n/d} $ 
acts as an involution on the set of floor quotients of $n$
\cite[Proposition 4]{Cardinal:10},  
given here as  Proposition \ref{prop:interval-complement}.
This involution lifts to a symmetry property of Cardinal's algebra. 
For  floor quotient intervals $\ADset[1,n]$ this symmetry is broken at the level of the partial order relation, i.e. the involution is generally not order-reversing for $\AD$, 
see Section \ref{subsec:24}.

There is much other literature related to floor quotients, and floor function identities. 
They appear in the counting estimates in  Dirichlet's hyperbola method~\cite{Dirichlet:1849}, see Figure~\ref{fig:integer-hyperbola} in Section~\ref{subsec:gaps-cutting-mults}.   
The result of Heyman~\cite{Heyman:19} treats from the hyperbola method  perspective the  cardinality
of the floor function interval $\ADset[1,n]$, already treated in \cite[Proposition 4, Remark 5]{Cardinal:10}.
Further convolution identities are studied by Cardinal and Overholt~\cite{CardinalO:20}.

Further  study of the floor quotient partial order  and its M\"{o}bius function may  shed light on  
both the divisor order and the algebra structure studied by Cardinal.

%
%
\subsection{Contents of paper}\label{subsec:13} 

 Section \ref{sec:2} presents an overview of structural properties of the floor quotient order,  
compared and contrasted with the divisor order on $\NNplus$ and the additive order on $\NNplus$,
for various statistics. 
It states various theorems of this paper without proof, viewed in this context.

The main part of the paper is Sections~\ref{sec:structure} through \ref{sec:mobius}.
 The  first objective of the paper  is to establish 
basic structural properties of the  floor quotient partial order, 
relating the quantities $n$, $d$ and $k$. 
This occupies Sections~\ref{sec:structure} through \ref{sec:interval-size}.
The second objective of the paper   is to study  the M\"{o}bius function 
$\muAD: \NNplus \times \NNplus \to \ZZ$ of the floor quotient poset.
This topic is treated in Section~\ref{sec:mobius}.
 Section~\ref{sec:concluding} raises four directions for further investigation.
 In more detail: 

\begin{itemize}
\item
Section~\ref{sec:structure} studies structural relations between the parameters
$(n, d, k)$ 
in the equation $d = \floor{n / k}$. 
Section \ref{subsec:31} gives six equivalent characterizations of the floor quotient relation in Theorem \ref{thm:equiv-properties}, 
and uses them to prove Theorem~\ref{thm:approx-order}.
Section \ref{subsec:32}  characterizes cutting length sets $\sK(d, n) = \{k : d = \floor{n / k}\}$ when $d$ is a floor quotient of $n$,
and studies the effect of the involution $\J_n$.
In Section~\ref{sec:floor-multiples} we show the set of all floor multiples of a given integer $d$
forms a numerical semigroup, and  characterize the resulting class of numerical semigroups. 

\item
Section~\ref{sec:initial-intervals} studies  the structure of initial intervals $\ADset[1,n]$.
Section~\ref{subsec:interval-involution} shows that the $n$-floor reciprocal map $\J_n: d \to \floor{n/d}$
is an involution, and proves Proposition~\ref{prop:interval-complement}.
The interval divides into two sets $\ADsetsmall(n)$ and $\ADsetlarge(n)$, having at most one element in common, 
which are exchanged by $\J_n$.
Section~\ref{subsec:gaps-cutting-mults} shows a duality relating gaps between successive floor quotients and cutting multiplicities; 
their values interchange under the involution $\J_n$. 
Section \ref{subsec:poset-interval-large-small} studies 
$\ADsetsmall(n)$ and $\ADsetlarge(n)$ as posets with the floor quotient relation,
and the action of $\J_n$ on these posets.
 Section \ref{subsec:consecutive-intervals} relates the poset structures of consecutive intervals $\ADset[1,n-1]$ and $\ADset[1, n]$. 
Section \ref{subsec:count-incidences} counts  the number of incidences of the floor quotient poset in an interval $\ADset[1,n]$,
and Section \ref{subsec:count-chains} counts the number of chains in intervals.

\item 
 Section \ref{sec:interval-size} studies the failure of scale-invariance for the floor quotient poset. 
Section \ref{subsec:51a} shows the size of a floor quotient interval 
$\ADset [d,n]$ is not scale invariant. 
The deviation from scale-invariance is measured as a function of the width  $w(d,n) \coloneqq {n}/{d}$ 
of an interval, a scale-invariant quantity.
 Section \ref{subsec:52a} presents universal upper bounds for the size of an interval in terms of its width,
 yielding Theorem \ref{thm:interval-bound0}. 
Section \ref{subsec:53a}  obtains lower bounds for its size, necessarily as functions of the parameters $n$ and $d$
separately.

\item
Section \ref{sec:mobius} treats the M\"{o}bius function of the floor quotient poset, starting with general facts.
Section \ref{subsec:62} shows that 
$\muAD(d,n)$ is expressible in terms of the classical M\"{o}bius function  when $d \geq \sqrt{n}$, as stated in Theorem \ref{thm:upper-interval-mobius2}. 
 Section \ref{subsec:mobius-bounds} finds upper bounds on the size of the M\"{o}bius function $\muAD(d,n)$. 
 Section \ref{subsec:64} shows that $\muAD(1,n)$ changes sign infinitely many times. 
 Section \ref{subsec:diff-mobius} studies the differenced M\"{o}bius function
$\Delta\mu_1(n) := \muAD(1,n)- \muAD(1, n-1)$ and proves the recursion in Theorem~\ref{thm:intro-mobius-recursion}.   
Section \ref{subsec:66} shows that the differenced M\"{o}bius vanishes at many values of $n$.
 Section \ref{subsec:mobius-data} presents numerical data. 
\end{itemize}

  Section~\ref{sec:concluding} raises four directions for further investigation.
\begin{itemize}
\item Asymptotic growth of the floor quotient M\"{o}bius
functions;
\item  Incidence algebra of the
floor quotient order;
\item   Generalized floor quotient partial orders:
the family of $a$-floor quotient orders $\ADa$ for integer $a \ge 1$,
with the floor quotient order $\ADset$ being the case $a=1$;
\item Approximate divisor orders interpolating
towards the additive total order.
\end{itemize} 

   \subsection*{Acknowledgments}
We thank the reviewer for helpful comments and improvements.
The first author was partially supported by  NSF grant DMS-1701576, 
by a Chern Professorship at MSRI in Fall 2018,
and by a 2018 Simons Fellowship in Mathematics, award 555520, in January 2019.
MSRI was supported in part by NSF award 1440140.
The second author was partially supported by
NSF grant DMS-1600223,
a Rackham Predoctoral Fellowship,
and an AMS--Simons Travel Grant.

%
%
\section{Floor quotient order versus divisor order}\label{sec:2} 

 In this section we  compare and contrast  the structure of  the floor quotient order with that of  the 
 additive order and the divisor order.  
We state relevant results, deferring proofs to subsequent sections.
To state results we set notation  for intervals of partial orders. 
Given a partial order $\sP = (\NNplus, \preccurlyeq_\sP)$ on the positive integers,
we let $\sP[m,n]$ denote the {\em  interval}  bounded by $m$ and $n$ in $\sP$, i.e.
\begin{equation}
\sP[m,n] = \{d \in \NNplus : m \preccurlyeq_\sP d \preccurlyeq_\sP n \}.
\end{equation}
We consider $\sP[m,n]$ as a poset with the induced order relation $\preccurlyeq_\sP$, unless otherwise specified.
The interval $\sP[m,n]$ is nonempty if and only if $m\preccurlyeq_\sP n$. 

For the additive order $\sA$, the divisor order $\sD$, and the floor quotient order
$\sQ$ on $\NNplus$ 
we have the set inclusions of initial  intervals
\[
\sD[1, n] \subseteq \ADset[1,n] \subseteq \sA[1, n] = \{1, \ldots, n\}.
\]
The additive order restricted to $\ADset[1,n]$ 
is a totally ordered linear extension of the floor quotient order.
The floor quotient order restricted to  $\sD[1,n]$
coincides with  the divisor order.

%
%
\subsection{The $n$-floor reciprocal map $\J_n$}
\label{subsec:20} 

The major  properties of floor quotients concern the action of its defining function
on initial intervals $\ADset[1,n]$.

\begin{defi}
For $n \ge 1$, the  {\em $n$-floor reciprocal  map} $\J_n: \sA[1,n] \to \sA[1,n]$ is given by
\begin{equation}\label{eqn:Phi-n}
\J_n(k) = \floor{ \frac{n}{k} }
  \quad \text{for} \quad 1\le k\le n.
\end{equation} 
By  definition of the floor quotient relation,
the floor quotient  interval  $\ADset[1,n]$ is precisely
the  range of  $\J_n$ on the domain 
 $\sA[1,n] = \{1,\ldots,n\}$. 
\end{defi} 

 An important feature of the $n$-floor reciprocal map $\J_n$ is that it leaves invariant each of the three domains 
$\sD[1,n]$,  $\ADset[1,n]$, and $\sA[1,n]$.
 The map $\J_n$ may therefore be iterated
on each of these domains.

We consider the action of this map $\J_n$  
on the intervals of the additive, multiplicative and floor quotient orders. 
\begin{enumerate}[(a)]
\item
The map $\J_n: \sD[1,n] \to \sD[1,n]$  on divisors of $n$ 
 is order-reversing for the divisor order.
We have $\J_n(d) = {n}/{d}$ for all $d \in \sD[1,n]$, 
and conversely $\J_n(d) = n/d$ only if $d \in \sD[1,n]$.
Furthermore, $\J_n$ is an involution on $\sD[1,n]$,
\begin{equation}
  \J_n^{\circ 2}(d) = d
\quad\text{for all}\quad d \in \sD[1,n],
\end{equation}
 hence $\J_n$ is 
 a bijection on $\sD[1,n]$.

 \item  
 The map $\J_n: \ADset[1,n] \to \ADset[1,n]$ 
is {\em never-order-preserving}
for the floor quotient order, meaning:
\begin{equation}
\text{if} \quad  d \AD e  \quad \text{then} \quad \J_n(d) \notAD \J_n(e),
 \end{equation} 
 i.e. either $\J_n(e) \AD \J_n(d)$ or else these two values are incomparable.
Furthermore $\J_n$ is an involution on $\ADset[1,n]$,
\begin{equation}
\label{eq:J-involution-fquo}
\J_n^{\circ 2}(d) =d \quad\text{for all}\quad d \in \ADset[1,n]
\end{equation}
 hence $\J_n$ is a bijection on  $\ADset[1,n]$.
 The floor quotient interval $\ADset[1,n]$ is  
 the maximal subset of $\{1, \ldots, n\}$ on which $\J_n$ acts as an involution,
 see Lemma \ref{lem:reciprocal-involution}.

\item
The map  $\J_n: \sA[1,n] \to \sA[1,n]$ is order-reversing for the additive order. 
 Its range is the floor quotient interval $\ADset[1,n]$.
Furthermore
\[
\J_n^{\circ 2}(k) \geq k \quad \mbox{for all} \quad k  \in \sA[1,n];
\]
 see Lemma \ref{lem:floor-reciprocal-cutting}.
 \end{enumerate}

We will  term the map $\J_n$ a {\em complementation map} specifically when its domain is $\ADset[1,n]$,
emphasizing the involution property.

%
%
\subsection{Lattice and rank properties}
\label{subsec:21}

The {divisor partial order} $\sD \coloneqq (\NNplus, \divides)$ is 
a  {\em distributive lattice} in the sense of Birkhoff. 
That is, it  has a  well-defined join function $\vee$ (least common multiple), 
and a well-defined meet function $\wedge$ (greatest common divisor),
which obey the distributive law
$x \wedge (y \vee z)= (x \wedge y) \vee (x \wedge z)$.
It also has a well-defined rank function,
 where the rank of an element $d$ is the length of any
set of covering relations to the minimal element $1$; 
the element $1$ is assigned rank $0$.
Each interval of the divisor partial order
is itself a ranked distributive lattice.

In contrast, the  floor quotient poset 
$\ADset  = (\NNplus, \AD)$
is not a lattice, and does not have a rank function.

\begin{enumerate}[(1)]
\item
 The floor quotient poset $\ADset  =(\NNplus, \AD) $ contains the 
 four element subposet on $\{2, 3, 6,7\}$ with Hasse diagram pictured
 in Figure \ref{fig:1}, in which all pictured edges are covering edges.
 

 \begin{figure}[h]  
 \[ \begin{tikzcd}
  6 \ar[dash]{d} \ar[dash]{dr} & 7 \ar[dash]{d} \ar[dash]{dl} \\
  2 & 3
 \end{tikzcd} \]
 \caption{Hasse diagram for the floor quotient relation on 
 $\{ 2, 3, 6,7\}$.} 
 \label{fig:1} 
 \end{figure}
 

 In Figure~\ref{fig:1}, elements $6$ and $7$ are both minimal common upper bounds for $2$ and $3$;
both $2$ and $3$ are maximal common lower bounds for $6$ and $7$.
 It follows that  the  floor divisor poset on $\NNplus$ does not  have unique least upper bounds 
 and does not have unique greatest lower bounds.  
 Thus it is neither a join semi-lattice nor a meet semi-lattice, hence not a lattice. 

\item
The floor quotient poset  $\ADset = (\NNplus, \AD)$ does not have a rank function. 
For any ranked poset  $P = (P, \preccurlyeq)$ with a minimal element, each finite interval  
of the  poset inherits  a rank function. 
But the  floor divisor poset $\ADset =(\NNplus, \AD)$ 
contains intervals not having a rank function. The  
 intervals $\ADset[1,9], \ADset[1,16]$
 and $\ADset[2, 20]$ each
  do not admit a rank function, as shown in Figure \ref{fig:2}.
  
\begin{figure}[h]
\centering
\raisebox{-0.5\height}{
 \begin{tikzpicture}[yscale=0.7]
 	\node (9) at (0,4) {$9$};
 	\node (4) at (-1,3) {$4$};
 	\node (3) at (1,2) {$3$};
 	\node (2) at (-1,1) {$2$};
 	\node (1) at (0,0) {$1$};
 	
 	\draw (9) -- (4) -- (2) -- (1);
 	\draw (9) -- (3) -- (1);
 \end{tikzpicture}
}
 \qquad\qquad
\raisebox{-0.5\height}{
 \begin{tikzpicture}[yscale=0.7]
  \node (16) at (0,6) {$16$};
  \node (8) at (-1,5) {$8$};
  \node (5) at (1,4.5) {$5$};
  \node (4) at (-2,3) {$4$};
  \node (3) at (1,1.5) {$3$};
  \node (2) at (-1,1) {$2$};
  \node (1) at (0,0) {$1$};
  \draw (16) -- (8) -- (4) -- (2) -- (1);
  \draw (16) -- (5) -- (2);
  \draw[out=-10, in=60] (16) to (3);
  \draw (3) -- (1);
  \end{tikzpicture}
}
 \qquad\qquad
\raisebox{-0.5\height}{
 \begin{tikzpicture}
 	\node (20) at (0,4) {$20$};
 	\node (10) at (-1,3) {$10$};
 	\node (6) at (1,2) {$6$};
 	\node (5) at (-1.2,1.4) {$5$};
 	\node (4) at (-2,1) {$4$};
 	\node (2) at (0,0) {$2$};
 	
 	\draw (20) -- (10) -- (5) -- (2);
 	\draw[out=180, in=90] (20) to (4);
 	\draw (4) -- (2);
 	\draw (20) -- (6) -- (2);
 \end{tikzpicture}
}
 \caption{Hasse diagrams for intervals $\ADset[1,9], \ADset[1,16]$
 and $\ADset[2, 20]$.}\label{fig:2} 
\end{figure} 
 
 
\end{enumerate} 

%
%
\subsection{Scaling-invariance properties}\label{subsec:23} 

The finite intervals $\sD[d,n]$ of the divisor poset $\sD = (\NNplus, \divides)$ have  a scaling-invariance property:
for each pair of endpoints $(d, n)$
and each scaling factor $a \ge 1$,
 there is an order-isomorphism of  divisor order intervals
\begin{equation}\label{eqn:scaling-inv}
\sD[d, n] \simeq \sD[ad, an],
\qquad\text{via}\qquad
m \mapsto am . 
\end{equation}
For  an interval $I= \sD[d,n]$ with endpoints $(d,n)$ we  call  $w(I) = w(\sD[d,n]) = {n}/{d}$ the {\em width} of the interval $I$.
The width is a scale-invariant quantity for the divisor poset: 
any  two intervals having the same width are order-isomorphic.
In particular, divisor intervals with the same width have the same cardinality. 

The  intervals $\ADset[d, n]$ of the  floor quotient order break scaling-invariance. 
That is, given $d \AD n$, the order interval
$\ADset [d,n]$ is generally not order-isomorphic to $\ADset [ad, an]$;
in particular the  cardinality of 
$\ADset [ad, an]$ generally varies with $a\ge 1$.
\begin{enumerate}[(i)]
\item
If $ d \AD n$ but $d \nmid n$, then $\ADset[d, n]$ is nonempty, while $\ADset[ad, an]$ is empty  for all large enough $a$. 
(See 
 Theorem \ref{thm:interval-lower-bounds}.)

\item
 The size of $\ADset[ad, an]$ can sometimes become much larger than
 $\ADset[d, n]$ for suitable $a$.
 \end{enumerate}
The size of the intervals $\ADset[ad, an]$, as $a$ varies, gives a quantitative measure of the extent to which scale-invariance is broken. 
We obtain  an upper bound on the size $|\ADset [d, n]|$ 
as  a function of a scale-invariant quantity, the  {\em width} $w(\ADset [d,n]) = \frac{n}{d}$.

\begin{thm}\label{thm:interval-bound0} 
For fixed $(d,n)$ and any $a \ge 1$, the  floor quotient interval $\ADset[ad,an]$ has size bounded by
\begin{equation}
\label{eqn:size-upper-bound0} 
	| \ADset[ad,an]| \leq \frac{3}{2} \left( \frac{n}{d} \right)^{2/3}   .
\end{equation} 
\end{thm}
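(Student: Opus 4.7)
Write $D = ad$ and $N = an$, so that the width $w \coloneqq n/d = N/D$ is scale-invariant. The plan is to bound $|\ADset[D,N]|$ by parameterizing each element via Cardinal's composition identity $\floor{\floor{N/j}/k} = \floor{N/(jk)}$. For $e \in \ADset[D,N]$, the canonical cutting length $j = \J_N(e) = \floor{N/e}$ is a divisor of some integer $m$ in the cutting length set $\cutset{D}{N}$: for any $k$ realizing $D = \floor{e/k}$, one has $m \coloneqq jk \in \cutset{D}{N}$. Since $\cutset{D}{N} \subseteq (N/(D+1), N/D]$, every such $m$ satisfies $m \leq w$, so the canonical $j$ divides some positive integer at most $w$. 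This yields
\[
	|\ADset[D,N]| \;\leq\; |T|, \qquad T \coloneqq \{j \in \NNplus : j \mid m \text{ for some } m \in \cutset{D}{N}\}.
\]

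To obtain the bound $\tfrac{3}{2} w^{2/3}$, I would split $T$ at the threshold $w^{1/3}$: divisors $j \leq w^{1/3}$ contribute at most $w^{1/3}$, while for $j > w^{1/3}$ in $T$ the cofactor $k = m/j$ satisfies $k < w^{2/3}$. Summing over such $k$ the number of multiples of $k$ lying in the interval $\cutset{D}{N}$ (of length at most $w/(D+1)$) gives a contribution bounded by $\sum_{k < w^{2/3}} \bigl( w/(kD) + 1 \bigr)$. An alternative, complementary route is to decompose via the splitting $\ADset[1,N] = \ADsetsmall(N) \cup \ADsetlarge(N)$ from Proposition~\ref{prop:interval-complement}: the ``small'' piece $\ADset[D,N] \cap \ADsetsmall(N)$ consists of floor multiples of $D$ in $[D, \sqrt{N}]$, counted via the semigroup structure $\bigcup_{k \geq 1}[Dk, Dk+k-1]$ developed in Section~\ref{sec:floor-multiples}, while the ``large'' piece $\ADset[D,N] \cap \ADsetlarge(N)$ is controlled using the divisor anti-isomorphism of Theorem~\ref{thm:intro-upper-interval-struct}.

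The main obstacle is extracting the precise constant $\tfrac{3}{2}$ rather than an $O(w^{2/3})$ bound disfigured by a logarithmic factor. Naive divisor summation over $T$ carries a $\log(w^{2/3})$ term, and removing it requires a careful case analysis comparing $D$ to $w^{1/3}$ (and, in the complementary decomposition, to $N^{1/4}$, where the transition from disjoint to overlapping floor-multiple intervals $[Dk, Dk+k-1]$ occurs). I expect the factor $\tfrac{3}{2}$ to emerge from a balanced accounting in which the small-side estimate (bounded by $w/(2D)$ when $D > N^{1/4}$ and by $\sqrt{N}$ otherwise) pairs with a large-side estimate of the same order, each contributing at most $\tfrac{3}{4} w^{2/3}$.
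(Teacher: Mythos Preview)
Your proposal has the right ingredients but misses the key simplification that delivers the constant $\tfrac{3}{2}$ cleanly. The paper does \emph{not} split $\ADset[D,N]$ at $\sqrt{N}$ (the natural cut for initial intervals) nor pass through divisor-counting in the cutting-length set. Instead it introduces a \emph{free integer parameter} $\beta \ge 1$ and uses the crude inclusion
\[
\ADset[D,N] \;\subset\; \{e : D \AD e,\ D \le e < \beta D\} \;\cup\; \{e : e \AD N,\ \beta D \le e \le N\}.
\]
The first set is bounded via the semigroup structure (at most $k$ floor multiples of $D$ in $[kD,(k+1)D)$), giving $1+2+\cdots+(\beta-1)=\tfrac12\beta^2-\tfrac12\beta$. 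The second set is bounded \emph{without} imposing the condition $D\AD e$ at all: any floor quotient of $N$ that is $\ge \beta D$ has the form $\floor{N/j}$ with $j\le N/(\beta D)$, so there are at most $w/\beta$ of them. Adding and optimizing the elementary expression $\tfrac12\beta^2-\tfrac12\beta+w/\beta$ over $\beta\in\NNplus$ gives the piecewise-linear envelope that sits below $\tfrac32 w^{2/3}$.

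Your divisor-set $T$ approach does encode the interval, but enumerating divisors of all $m$ in an interval of length $\le w/D$ unavoidably brings in an average-divisor log factor unless you throw away information---which is exactly what the paper does by dropping the constraint $D\AD e$ on the large side. Your alternative $\ADsetsmall/\ADsetlarge$ decomposition fixes the cut at $\sqrt{N}$, so on the large side you must still intersect with the floor-multiple condition via Theorem~\ref{thm:intro-upper-interval-struct}, and that is where your analysis stalls. The missing idea is simply: make the cutoff a tunable multiple $\beta D$ of the lower endpoint, rather than a fixed function of $N$; then the large side becomes a one-line count and the optimal $\beta\approx w^{1/3}$ balances the two pieces to produce $\tfrac32 w^{2/3}$ exactly.
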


This result is proved as Theorem \ref{thm:interval-bound}. 
Examples  show this upper bound is  the correct order of magnitude.
For the case $n/d = m^3$ and $a = 1$, we have
$|\ADset[1,m^3] |= 2 m^{3/2} +O(1)$; see Corollary \ref{cor:interval-size}.
However  taking $n/d = m^3$ and $a = m$ we have 
$|\ADset[ad, an]| =|\ADset[m, m^4]| =  \frac{3}{2}m^2 +O(m)$; see Proposition \ref{prop:410}.

%
%
\subsection{Self-duality structures: divisor poset intervals}
\label{subsec:24} 

For the divisor poset $\sD = (\NNplus, \divides)$, 
each of its initial intervals has two important symmetry properties:
a {\em self-duality as a set}, and {\em self-duality as a poset}.

\begin{enumerate}
\item[(1)]
The divisor interval $\sD[1,n]$ 
admits an involution $\J_n(d) = n/d$ which exchanges the lower and upper endpoints.  
We call this a {\em self-duality as a set}.
We split the order interval $\Dset[1,n]$  in two almost complementary parts
\[
	\Dset[1,n] = \Dset^{-}(n) \bigcup \Dset^{+}(n),
\]
in which
\[
	\Dset^{-}(n) = \{ d \in \Dset[1,n]: d \le \sqrt{n} \}
	\quad
	\text{and} 
	\quad
	\Dset^{+}(n) =  \{ d \in \Dset[1,n] :  \J_n(d) \leq \sqrt{n} \}.
\]
It is clear that 
$\J_n (\Dset^{+}(n) ) = \Dset^{-}(n)$
 and $\J_n (\Dset^{-}(n))  = \Dset^{+}(n)$, as sets.

Moreover, we have
\[
\Dset^{+}(n) \bigcap \Dset^{-}(n) 
= \begin{cases}
\{s\} &  \text{if }\, n = s^2, \\
\emptyset & \text{if }\, n \neq s^2  \quad \text{for any }\, s \ge 1.
\end{cases} 
\]
If  $n=s^2$ then the common element $s$ is the unique fixed point 
of the involution $\J_n$. 

\item[(2)] 
The involution $\J_n(d) = n/d$ is an {order-reversing} bijection on $\sD[1,n]$:
if $d \divides  e$ in the divisor order on $\sD[1,n]$ then $ (n/e) \divides  (n/d)$. 
We call this {\em self-duality as a poset}.

With respect to the splitting $\sD[1,n] = \sD^-(n) \cup \sD^+(n)$, 
$\J_n$ defines an anti-isomorphism from $\sD^+(n)$ to $\sD^-(n)$, as posets, and vice versa.
In other words, $\sD^+(n)$ is an ``order-reversed'' copy of $\sD^-(n)$, as posets.

Since $\J_n$ defines an anti-isomorphism from the whole interval $\sD[1,n]$ to itself, the interval $\sD[1,n]$ 
is an ``order-reversed'' copy of itself.
We can restate this in the language of dual partial orders.
Define the dual order $\mid^{\ast} $ on $\Dset[1,n]$ by
\[ 
	d \mid^{\ast} e \quad  \text{if and only if} \quad e \mid d. \]
Then the {$n$-reciprocal map} 
$\J_n(d) = {n}/{d}$ defines an isomorphism 
$
(\Dset[1,n], \divides) \xrightarrow{\sim} (\Dset[1,n], \;\mid^{\ast}\; )
$
and vice versa.
\end{enumerate}

%
%
\subsection{Self-duality structures: floor quotient intervals}
\label{subsec:24b} 

The initial floor quotient intervals  $\ADset[1,n]$ retain the first  of these self-duality structures (as {\em sets}), but not the second (as {\em posets}).

First, the $n$-floor quotient map $\J_n: \ADset[1,n] \to \ADset[1,n]$ is an involution, 
and the set  $\ADset[1,n]$ is the largest subset of $\{1, \ldots, n\}$ on which $\J_n$ restricts to be an involution
(Lemma \ref{lem:floor-reciprocal-cutting}). 
Cardinal \cite[Proposition 4]{Cardinal:10} noted there is a complementation operation 
on the floor quotient set $\ADset[1,n]$ 
which shows the self-duality property at the level of sets. 
We include a proof in Proposition \ref{prop:interval-complement}.
  
%
%
\begin{prop}
[Cardinal] 
\label{prop:interval-complement1}
Given  an integer $n \geq 1$,
let 
\begin{equation*}
\ADsetsmall(n) = \{d : d \AD n,\,  d \leq \sqrt{n} \}
\qquad\text{and}\qquad
\ADsetlarge(n) = \{d : d \AD n,\, \floorfrac{n}{d} \leq \sqrt{n} \}.
\end{equation*}
\begin{enumerate}[(a)]
\item 
We have
$ 
\ADset[1,n] = \ADsetsmall (n) \bigcup \ADsetlarge(n).
$
The  map $\J_n: d \mapsto \floor{n/d}$ defines a bijection sending $\ADsetlarge(n)$ to
$\ADsetsmall(n)$ 
and to a bijection sending $\ADsetsmall(n)$ to
$\ADsetlarge(n)$. 

\item 
Moreover, letting $s= \floor{\sqrt{n}}$,  one has
$$
\ADsetsmall(n) = \{ 1, 2, 3, \ldots, s\},
\qquad\text{and}\qquad
\ADsetlarge(n) 
= \{ n,  \floor{\frac{n}{2} }, \floor{\frac{n}{3} }, \ldots, 
\floor{ \frac{n}{s} } \}.
$$
The intersection of  $\ADsetsmall(n)$ and $\ADsetlarge(n)$ is
\begin{equation*}
\ADsetsmall(n) \bigcap \ADsetlarge(n) = \begin{cases}
\{s\} &\text{if } \, s^2 \le n < s(s+1) \\
\emptyset &\text{if } \,   s(s+1)\le n < (s+1)^2.
\end{cases}
\end{equation*}
\end{enumerate}
\end{prop}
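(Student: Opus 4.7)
The plan is to first establish the involution property $\J_n^{\circ 2}(d) = d$ for all $d \in \ADset[1,n]$, which is Lemma~\ref{lem:reciprocal-involution} in the excerpt. Writing $d = \floor{n/k}$ for some cutting length $k$ and setting $k' = \floor{n/d}$, the inequalities $k \le n/d$ (from $d \le n/k$) and $d \le n/k'$ (from $k' \le n/d$), combined with $k \le k'$, force both $\floor{n/k'} \ge d$ and $\floor{n/k'} \le \floor{n/k} = d$, hence $\J_n(\J_n(d)) = \floor{n/k'} = d$.

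With the involution in hand, I would prove Part (b) first. To show $\ADsetsmall(n) = \{1, 2, \ldots, s\}$ I need to verify that every integer $d$ with $1 \le d \le s$ is a floor quotient of $n$. The condition $d = \floor{n/k}$ is equivalent to $k$ lying in the half-open interval $(n/(d+1), n/d]$ of length $n/(d(d+1))$. For $d \le s-1$ this length exceeds $1$ (using $d(d+1) \le (s-1)s < s^2 \le n$), so an integer $k$ exists. For $d = s$ I split on whether $n < s(s+1)$: in the first case $k = s$ itself lies in the interval $(n/(s+1), n/s]$ and gives $\floor{n/s} = s$, while in the second case the length $n/(s(s+1)) \ge 1$ again produces an integer solution. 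The companion identification $\ADsetlarge(n) = \{\floor{n/k} : 1 \le k \le s\}$ follows from the involution: if $d \in \ADsetlarge(n)$ then $k := \floor{n/d} \le s$, so $d = \J_n(k) = \floor{n/k}$; conversely, for $1 \le k \le s$ we have $\floor{n/k} \ge \floor{n/s} \ge s$ and $\floor{n/\floor{n/k}} = k \le s$ by involution, placing $\floor{n/k}$ in $\ADsetlarge(n)$.

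For Part (a): the set equality $\ADset[1,n] = \ADsetsmall(n) \cup \ADsetlarge(n)$ holds because any floor quotient $d$ of $n$ either satisfies $d \le \sqrt{n}$ (so $d \in \ADsetsmall(n)$) or satisfies $d > \sqrt{n}$, forcing $\floor{n/d} \le n/d < \sqrt{n}$ and hence $d \in \ADsetlarge(n)$. The bijection $\J_n$ between the two sets is then immediate from the involution, since the defining conditions of $\ADsetsmall(n)$ and $\ADsetlarge(n)$ are exchanged by $d \leftrightarrow \floor{n/d}$. For the intersection formula, $s \in \ADsetlarge(n)$ iff $\floor{n/s} \le \sqrt{n}$, and since $s^2 \le n$ gives $\floor{n/s} \ge s$, this reduces to $\floor{n/s} = s$, i.e., $s^2 \le n < s(s+1)$. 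Therefore the intersection is $\{s\}$ when $s^2 \le n < s(s+1)$ and empty when $s(s+1) \le n < (s+1)^2$. The main technical hurdle throughout is the boundary analysis at $d = s$, where the cutting-length interval first becomes too short to contain an integer automatically and the case split on the size of $n$ compared to $s(s+1)$ is essential; everything else is bookkeeping with the involution.
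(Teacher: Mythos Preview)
Your argument is correct and follows the same overall architecture as the paper's proof (Proposition~\ref{prop:interval-complement}): establish the involution $\J_n^{\circ 2}=\mathrm{id}$ on $\ADset[1,n]$, use it to interchange $\ADsetsmall(n)$ and $\ADsetlarge(n)$, and verify directly that every $d\le s$ is a floor quotient of $n$. The one substantive difference is in this last step. You show $d\AD n$ for $d\le s$ by measuring the length $n/(d(d+1))$ of the cutting-length interval $(n/(d+1),\,n/d]$, which forces a case split at $d=s$ (where the length may drop to~$1$). The paper instead invokes the strong remainder characterization, Theorem~\ref{thm:equiv-properties}\,(4): writing $n=dk+r$ with $0\le r<d$ and $k=\floor{n/d}$, the hypothesis $d\le\sqrt{n}$ gives $k\ge\floor{\sqrt{n}}\ge d$, so $r<d\le k$ automatically, and $d\AD n$ follows with no boundary analysis. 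Your route is more self-contained (it does not appeal to the equivalences of Theorem~\ref{thm:equiv-properties}); the paper's route is shorter and avoids the $d=s$ case split that you flag as the main technical hurdle. One small point: in your intersection argument you only explicitly test whether $s\in\ADsetlarge(n)$; it would be worth stating that since every element of $\ADsetlarge(n)$ is $\ge\floor{n/s}\ge s$ and every element of $\ADsetsmall(n)$ is $\le s$, the intersection is contained in $\{s\}$ from the outset.
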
 

The  self-duality at the set level in Proposition \ref{prop:interval-complement1} is broken at the level of the partial order: 
the involution $\J_n(\cdot)$ is not order-reversing on $\ADset[1,n]$.
This failure happens in a ``one-sided'' sense: the map $\J_n: \ADsetlarge(n) \to \ADsetsmall(n)$ is order-reversing, but the map $\J_n: \ADsetsmall(n) \to \ADsetlarge(n)$ is not.
The next result shows that the partial order
can only be broken by becoming incomparable; 
it can never preserve order.  
We let $d \perp_1 n$ mean that $d$ is incomparable with $n$
in the floor quotient order.

%
%
\begin{thm}
[Broken self-duality as posets on initial floor quotient intervals]
\label{thm:interval-complement2}
Consider the sets 
\begin{equation*}
\ADsetsmall(n) = \{d : d \AD n,\, d \leq {\sqrt{n}} \}
\qquad\text{and}\qquad
\ADsetlarge(n) = \{d : d \AD n,\, \J_n(d) \leq \sqrt{n} \}.
\end{equation*}
\begin{enumerate}[(a)]
\item
 If $d \AD e$ in $\ADsetlarge(n)$, then $\J_n(e) \AD \J_n(d)$ in $\ADsetsmall(n)$,
 so $\J_n: \ADsetlarge(n) \to \ADsetsmall(n)$ is order-reversing for the floor quotient order.

\item
 If $d \AD e$ in $\ADsetsmall(n)$, then either $\J_n(e) \AD \J_n(d)$ in $\ADsetlarge(n)$
or $\J_n(e) \perp_1 \J_n(d)$ in $\ADsetlarge(n)$,
 so $\J_n: \ADsetsmall(n) \to \ADsetlarge(n)$ is never-order-preserving
 for the floor quotient order.
\end{enumerate}
\end{thm}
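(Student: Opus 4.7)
The plan is to derive both parts of Theorem 2.5 directly from Theorem 1.3 (the anti-isomorphism $k \mapsto \floor{n/k}$ from the divisor order on $\{1, \ldots, \floor{\sqrt{n}}\}$ to the floor quotient order on $\ADsetlarge(n)$), combined with the involution property of $\J_n$ on $\ADset[1,n]$ established in Proposition~\ref{prop:interval-complement1}. The single observation needed throughout is that whenever $k \le \sqrt{n}$, one has $k \in \ADsetsmall(n) \subseteq \ADset[1,n]$, and hence $\J_n(\floor{n/k}) = \J_n(\J_n(k)) = k$ by the involution property \eqref{eq:J-involution-fquo}.

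For part (a), given $d, e \in \ADsetlarge(n)$ with $d \AD e$, I would use Proposition~\ref{prop:interval-complement1}(b) to write $d = \floor{n/j}$ and $e = \floor{n/\ell}$ with $j, \ell \in \{1, \ldots, \floor{\sqrt{n}}\}$. By the observation above, $\J_n(d) = j$ and $\J_n(e) = \ell$. Theorem~\ref{thm:intro-upper-interval-struct} then translates $d \AD e$ into $\ell \divides j$, and Theorem~\ref{thm:approx-order} (the floor quotient order refines the divisor order) gives $\ell \AD j$, i.e.\ $\J_n(e) \AD \J_n(d)$, as required.

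For part (b), I would argue by contradiction to rule out the possibility $\J_n(d) \AD \J_n(e)$ when $d \AD e$ in $\ADsetsmall(n)$ with $d \ne e$. Since $\AD$ is refined by the additive order, $d < e$. Both $d$ and $e$ lie in $\{1, \ldots, \floor{\sqrt{n}}\}$, so $\J_n(d) = \floor{n/d}$ and $\J_n(e) = \floor{n/e}$ lie in $\ADsetlarge(n)$. Applying Theorem~\ref{thm:intro-upper-interval-struct} with the roles of $j$ and $\ell$ played by $d$ and $e$, the putative relation $\J_n(d) \AD \J_n(e)$ is equivalent to $e \divides d$, which forces $e \le d$ and contradicts $d < e$. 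Hence for $d \ne e$ the only remaining possibilities are $\J_n(e) \AD \J_n(d)$ or $\J_n(e) \perp_1 \J_n(d)$; the case $d = e$ is trivial.

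There is no significant obstacle: once Theorem~\ref{thm:intro-upper-interval-struct} is available, Theorem~\ref{thm:interval-complement2} is essentially a formal consequence obtained by transporting the divisor order on $\{1, \ldots, \floor{\sqrt{n}}\}$ to both halves of $\ADset[1,n]$ through $\J_n$. The asymmetry between (a) and (b) — order-reversing versus merely never-order-preserving — reflects the fact that $(\ADsetsmall(n), \AD)$ typically carries strictly more relations than the divisor order on $\{1, \ldots, \floor{\sqrt{n}}\}$ (for example $3 \AD 7$ in $\ADsetsmall(50)$, while $3 \nmid 7$), so the ``multiplicative copy'' $\ADsetlarge(n)$ cannot mirror all of them under $\J_n$.
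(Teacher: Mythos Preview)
Your proposal is correct and, for part (a), essentially identical to the paper's argument (given as Corollary~\ref{cor:410}): write $d=\floor{n/j}$, $e=\floor{n/\ell}$, invoke Theorem~\ref{thm:intro-upper-interval-struct} to obtain $\ell\mid j$, and conclude $\J_n(e)=\ell \AD j=\J_n(d)$ since $\AD$ refines divisibility.

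For part (b) there is a small but genuine difference in route. You rule out $\J_n(d)\AD\J_n(e)$ by invoking the anti-isomorphism again: $\floor{n/d}\AD\floor{n/e}$ would force $e\mid d$, contradicting $d<e$. The paper instead uses only the elementary fact that $\J_n$ is (strictly) order-reversing for the additive order on $\ADset[1,n]$: from $d<e$ one gets $\J_n(d)>\J_n(e)$, hence $\J_n(d)\notAD\J_n(e)$ since $\AD$ is refined by $\le$. The paper's version is slightly lighter---it does not need Theorem~\ref{thm:intro-upper-interval-struct} for (b) at all---while your version has the virtue of uniformity, deriving both halves from the same structural result. Either way the content is the same.
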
 

This theorem follows from results given in Proposition ~\ref{prop:interval-complement} and Corollary \ref{cor:410}. 
To see an example of the extent to which $\ADsetsmall(n)$ fails to be dual to $\ADsetlarge(n)$, see Figure~\ref{fig:hasse-168} in Section \ref{subsec:poset-interval-large-small}, which shows the Hasse diagram of the floor quotient interval for $n = 168$.

%
%
\subsection{Incidences of floor quotient intervals}
\label{subsec:25} 

The amount of symmetry breaking in the poset-level complementation symmetry
can be quantified in terms of the incidence structure of the floor quotient order for the sets $\ADsetlarge(n)$ and $\ADsetsmall(n)$. 

We count the number of incidences in floor quotient intervals.
We let $Z(S_1, S_2)$ denote the number of incidences between members of
subsets $S_1$ and $S_2$ in a partial order, and we let $Z(S) = Z(S,S)$.
The following result is proved as Theorem \ref{thm:interval-incidence-bound}.

\begin{thm}[Incidence counts in initial intervals]
\label{thm:intro-interval-incidence}
Given $n \ge 2$, let $\ADset[1,n]$ be an initial interval of the floor quotient order.
Then the following hold. 

\begin{enumerate}[(a)]
\item The total 
number of incidences in $\ADset[1,n]$  satisfies 
\begin{equation}\label{eq:total-edges0}
\Z(\ADset[1,n]) = \frac{16}{3} n^{3/4} + O \left( n^{1/2} \right).
\end{equation} 

\item The number of incidences in $\ADsetsmall(n)$ and $\ADsetlarge(n)$ satisfy the bounds
\begin{align}
\Z(\ADsetsmall(n) ) &= \frac{4}{3} n^{3/4}  + O \big(n^{1/2}\big) \label{eq:small-edges0} \\
\Z(\ADsetlarge(n) ) &= \frac{1}{2} n^{1/2}\log n  + (2 \gamma -1)n^{1/2} + O \big( n^{1/4} \big). \label{eq:large-edge0}
\end{align} 

\item The number of incidences between $\ADsetsmall(n)$ and $\ADsetlarge(n)$ satisfy the bounds
 \begin{equation}
 \label{eq:small-large-edges0}
\Z( \ADsetsmall(n), \ADsetlarge(n) )= 4 n^{3/4} +O\left(  n^{1/2} \log n\right)
 \end{equation}
 and $\Z(\ADsetlarge(n), \ADsetsmall(n)) = 0  \text{ or } 1.$

\end{enumerate}
\end{thm}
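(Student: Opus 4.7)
The plan is to compute the four incidence subcounts in (b) and (c) directly, then derive the total (a) by summing. The main computational work is the off-diagonal count $\Z(\ADsetsmall(n), \ADsetlarge(n))$ in (c); the remaining three are comparatively straightforward consequences of Proposition~\ref{prop:interval-complement1} and classical estimates.

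For $\Z(\ADsetlarge(n))$, Theorem~\ref{thm:intro-upper-interval-struct} identifies $\ADsetlarge(n)$ under $\AD$ with the opposite of $(\{1,\ldots,s\}, \divides)$ via $k \mapsto \floor{n/k}$, so incidences in $\ADsetlarge(n)$ are in bijection with divisibility pairs $(\ell, j)$ with $\ell \divides j$ and $1 \le \ell, j \le s$. Their total count is $\sum_{j \le s} \tau(j) = s\log s + (2\gamma-1)s + O(\sqrt{s})$ by Dirichlet's divisor sum, and substituting $s = \sqrt{n} + O(1)$ yields \eqref{eq:large-edge0}. For $\Z(\ADsetsmall(n))$, the identity $\ADsetsmall(n) = \{1,\ldots,s\}$ from Proposition~\ref{prop:interval-complement1} gives
\[
\Z(\ADsetsmall(n)) = \sum_{e=1}^s |\ADset[1,e]| = \sum_{e=1}^s \bigl(2\floor{\sqrt{e}} + O(1)\bigr) = \tfrac{4}{3} s^{3/2} + O(s) = \tfrac{4}{3} n^{3/4} + O(\sqrt{n}),
\]
using the cardinality formula $|\ADset[1,e]| = 2\floor{\sqrt{e}} + O(1)$ from Proposition~\ref{prop:interval-complement1}. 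For $\Z(\ADsetlarge(n), \ADsetsmall(n))$, any incidence $d \AD e$ with $d \in \ADsetlarge(n)$, $e \in \ADsetsmall(n)$ forces $s \le d \le e \le s$, combining $d \le e$ (since $\AD$ refines the additive order) with $d \ge \floor{n/s} \ge s$ and $e \le s$ from Proposition~\ref{prop:interval-complement1}. Hence $d = e = s$, and the count is $1$ when $s \in \ADsetlarge(n) \cap \ADsetsmall(n)$ (iff $s^2 \le n < s(s+1)$) and $0$ otherwise.

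The main step is the estimate of $\Z(\ADsetsmall(n), \ADsetlarge(n))$. Using distinctness of the values $m_k := \floor{n/k}$ for $1 \le k \le s$,
\[
\Z(\ADsetsmall(n), \ADsetlarge(n)) = \sum_{k=1}^s f(k), \qquad f(k) := |\ADset[1,m_k] \cap \{1,\ldots,s\}|.
\]
For $k \ge 2$ the bound $m_k \le n/2 \le s^2$ ensures $s_k := \floor{\sqrt{m_k}} \le s$, and Proposition~\ref{prop:interval-complement1} applied to $m_k$ decomposes $\ADset[1, m_k]$ as $\{1,\ldots, s_k\} \cup \{\floor{m_k/j} : 1 \le j \le s_k\}$ with overlap of size $\le 1$. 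Intersecting with $\{1,\ldots,s\}$ keeps all of the first piece (contributing $s_k$) and keeps those $j$ with $\floor{m_k/j} \le s$, equivalently $j > m_k/(s+1)$, which contribute $s_k - j_k^* + O(1)$ values for $j_k^* := \floor{m_k/(s+1)}$. Hence
\[
f(k) = 2 s_k - j_k^* + O(1) = 2\sqrt{n/k} - \sqrt{n}/k + O(1),
\]
and summing via $\sum_{k \le s} k^{-1/2} = 2\sqrt{s} + O(1)$ and $\sum_{k \le s} k^{-1} = \log s + O(1)$ gives
\[
\sum_{k=1}^s f(k) = 4\sqrt{n}\sqrt{s} - \sqrt{n}\log s + O(\sqrt{n}) = 4 n^{3/4} - \tfrac{1}{2} \sqrt{n}\log n + O(\sqrt{n}),
\]
establishing \eqref{eq:small-large-edges0}.

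Part (a) then follows by summing the four subcounts. Since $\ADsetsmall(n) \cap \ADsetlarge(n)$ has at most one element $s$, incidences involving $s$ are overcounted by at most $|\ADset[1,s]| + |\ADsetlarge(n)| = O(\sqrt{n})$. The $\pm \tfrac{1}{2} \sqrt{n}\log n$ contributions from $\Z(\ADsetlarge(n))$ and $\Z(\ADsetsmall(n), \ADsetlarge(n))$ cancel, and the remaining leading terms combine as $\tfrac{4}{3} n^{3/4} + 4 n^{3/4} = \tfrac{16}{3} n^{3/4}$, yielding \eqref{eq:total-edges0} with error $O(\sqrt{n})$. The principal obstacle is the bookkeeping in the small-large off-diagonal estimate: handling the possible overlap element $s_k$ and the $O(1)$ rounding errors in $f(k)$ uniformly in $k$, then identifying the cancellation of the subleading $\tfrac{1}{2}\sqrt{n}\log n$ term against the divisor-sum contribution from $\Z(\ADsetlarge(n))$ so that the total error in (a) is $O(\sqrt{n})$ rather than $O(\sqrt{n}\log n)$.
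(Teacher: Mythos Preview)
Your argument is correct, but it inverts the paper's logic. The paper computes the total $\Z(\ADset[1,n])$ \emph{directly}: writing $\Z(\ADset[1,n]) = \sum_{e \in \ADset[1,n]} |\ADset[1,e]|$ and splitting according to whether $e \in \ADsetsmall(n)$ or $e \in \ADsetlarge(n)$, the large-$e$ contribution is simply $\sum_{k\le s}(2\sqrt{n/k}+O(1)) = 4n^{3/4}+O(\sqrt n)$, with no need to intersect $\ADset[1,m_k]$ against $\{1,\dots,s\}$. Part (a) thus falls out with error $O(\sqrt n)$ immediately, and the cross-term in (c) is then obtained by \emph{subtraction} (inclusion--exclusion), which is why the paper only records the cruder error $O(\sqrt n\log n)$ there. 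Your route --- computing the cross-term first via the finer count $f(k)=|\ADset[1,m_k]\cap\{1,\dots,s\}|$ and then summing to recover (a) --- requires you to track the subleading $-\tfrac12\sqrt n\log n$ in the cross-term and verify it cancels against $\Z(\ADsetlarge(n))$; the payoff is that you actually obtain the sharper asymptotic $\Z(\ADsetsmall(n),\ADsetlarge(n)) = 4n^{3/4} - \tfrac12\sqrt n\log n + O(\sqrt n)$, which is stronger than what the theorem states. The paper's approach is shorter for (a); yours gives more for (c).
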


 Theorem \ref{thm:intro-interval-incidence} shows that the incidence structure on $\ADsetlarge(n)$ comprises 
 an infinitesimally small fraction of the total incidences in $\ADset[1,n]$, as $n \to \infty$.
 The crossover set $\Z( \ADsetsmall(n), \ADsetlarge(n) )$ contains asymptotically $3/4$ of the
 incidence relations in the partial order restricted to $\ADset[1,n]$.

%
%

We can quantitatively compare  floor quotient initial intervals with
the initial intervals of the divisor order and the additive order, 
measuring either the size of initial intervals (as sets)  
or the number of incidences (as a partial order). 

Comparing the sizes of initial intervals for the three partial orders, we have:
\begin{enumerate}
 \item divisor order  intervals have size
$|\sD[1,n]| = O( n^{\epsilon})$ for any fixed $\epsilon> 0$, as $n \to \infty$;
 \item floor quotient intervals have size $|\ADset[1,n]| = 2 n^{1/2} + O(1)$, as $n \to \infty$;
 \item additive order intervals have size $|\sA[1,n]| = n$, for any $n \ge 1$.
\end{enumerate}

Comparing total incidence counts  in initial intervals for the three partial orders, we have:
\begin{enumerate}
 \item divisor order intervals satisfy 
$\Z(\sD[1,n])= O( n^{\epsilon})$ for any fixed  $\epsilon> 0$, for $n \to \infty$;
 \smallskip
 \item floor quotient intervals satisfy $\displaystyle \Z(\ADset[1,n]) = \frac{16}{3} n^{3/4} + O(n^{1/2})$, for $n \to  \infty$;
 \smallskip
 \item additive  order  intervals satisfy $\displaystyle \Z(\sA[1,n]) = \frac12 n^2 + O(n)$, for $n \to \infty$.
\end{enumerate}


%
%
\section{Structural results}\label{sec:structure}

This section studies relations among the three parameters $n, k, d$ in the defining equation of
the floor quotient relation, $d= \floor{n/k}.$  It first presents
six equivalent characterizations of the floor quotient relation $d \AD n$,
and prove it defines a partial order. 
It then studies the set of cutting lengths $k$ certifying that 
$d$ is a floor quotient of $n$. 
Finally it shows that the set of floor multiples of a given $d$ is a numerical semigroup,
and determines generators of this semigroup. 

%
%

\subsection{Floor quotient characterizations} 
\label{subsec:31} 
The floor quotient property has several 
quite different looking 
characterizations. 

%
%
\begin{thm}[Characterizations of floor quotient relation]
\label{thm:equiv-properties}  
The following properties  on pairs $(d, n)$ of positive integers are equivalent: 
\begin{enumerate}[(1)]
\item {\em  (Cutting property)}   $\dd = \floor{ \frac{n}{k}}$ for some integer $k$. 

\item {\em (Covering property)}  
For the half-open unit interval $I_\dd= [\dd, \dd+1)$ of $\RR$, there is some integer
$k$ such that 
$k I_\dd$ covers $I_n$,
i.e.
$ k  [\dd, \dd+1) \supseteq [n,n+1).$

\item
{\em (Intersection property) }
For  the  half-open unit interval $I_\dd = [\dd, \dd+1)$ of $\RR$, there is some integer
$k $ such that $   k  [\dd, \dd+1) \cap [n, n+1) \ne \emptyset.$

\item
 {\em (Strong remainder property)}
There is a positive integer $k$ such that
$$
n = \dd k + r \quad\text{where the remainder $r$ satisfies } \,\, 0 \leq r < \min(\dd,k).
$$

\item
 {\em (Tipping-point property)}
There holds the strict inequality 
$ \floor{\frac{n}{\dd}} > \floor{ \frac{n}{\dd+1} }$.

\item
{\em (Reciprocal-duality property)}
$\dd = \floor{ \frac{n}{\floor{n/\dd}} }$.
\end{enumerate} 

\end{thm}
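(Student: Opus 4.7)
The plan is to prove the six characterizations equivalent by first establishing the ``geometric'' block (1)$\Leftrightarrow$(2)$\Leftrightarrow$(3) via direct rewriting, and then reducing each of (4), (5), (6) to (1) by exhibiting a canonical witness cutting length $k^\ast = \floor{n/d}$.

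First I would note that $d = \floor{n/k}$ is equivalent to the integer inequalities $kd \le n < k(d+1)$. Since $n$ and $k(d+1)$ are integers, this is the same as $[n,n+1) \subseteq [kd, k(d+1)) = k[d,d+1)$, giving (1)$\Leftrightarrow$(2). The implication (2)$\Rightarrow$(3) is immediate; for (3)$\Rightarrow$(1), a nonempty intersection yields some real $x$ with $kd \le x < k(d+1)$ and $n \le x < n+1$, which forces $kd < n+1$ and $n < k(d+1)$, and integrality upgrades the first inequality to $kd \le n$, recovering (1).

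Next I would isolate the technical heart of the argument, namely identifying the set of cutting lengths for a pair $(d,n)$. The equation $d = \floor{n/k}$ is equivalent to $k \in \bigl(\tfrac{n}{d+1},\, \tfrac{n}{d}\bigr]$, so the set of valid $k$ is a (possibly empty) integer interval, and whenever it is nonempty its maximum is $k^\ast \coloneqq \floor{n/d}$. From this observation several equivalences follow at once. Taking (1) as given with $k^\ast$ as witness, substitution yields (6) $d = \floor{n/\floor{n/d}}$; conversely (6) exhibits $k = \floor{n/d}$ as a cutting length, proving (1). For (5), the interval $(n/(d+1), n/d]$ contains an integer if and only if $\floor{n/d} > \floor{n/(d+1)}$, so (1)$\Leftrightarrow$(5). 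Finally, for (1)$\Rightarrow$(4) I would again pick $k = k^\ast$: the remainder $r = n - d\floor{n/d}$ is the ordinary Euclidean remainder of $n$ by $d$, hence $0 \le r < d$, while $r < k^\ast$ is automatic from $k^\ast$ being a valid cutting length; thus $r < \min(d, k^\ast)$. The reverse (4)$\Rightarrow$(1) is immediate: from $n = dk + r$ with $0 \le r < k$ one reads off $d = \floor{n/k}$.

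The main delicate point is the claim that $k^\ast = \floor{n/d}$ always lies in the cutting-length interval whenever that interval is nonempty, because this is what powers the simultaneous derivations of (4), (5), and (6) from (1). The verification is short — if any valid $k_0$ exists, then $k_0 \le n/d$ forces $k_0 \le \floor{n/d}$, while $k_0 > n/(d+1)$ forces $\floor{n/d} > n/(d+1)$ as well — but it is the single combinatorial fact on which the whole equivalence hinges, and I would state and prove it explicitly before deducing the remaining implications.
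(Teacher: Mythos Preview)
Your proposal is correct and follows essentially the same route as the paper: both establish (1)$\Leftrightarrow$(2)$\Leftrightarrow$(3) by direct rewriting, then identify the cutting-length set as the integer interval $\bigl(\tfrac{n}{d+1},\tfrac{n}{d}\bigr]$ with canonical witness $k^\ast=\lfloor n/d\rfloor$, and deduce (5) and (6) from this. The only organizational difference is that the paper links (4) to (6) via the contrapositive (if the Euclidean remainder $r\ge k$ then $\lfloor n/\lfloor n/d\rfloor\rfloor\ge d+1$), whereas you link (4) directly to (1) by choosing $k=k^\ast$ and reading off both $r<d$ and $r<k^\ast$; these are the same computation viewed from opposite sides.
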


\begin{rmk}
Each of the characterizations  in Theorem \ref{thm:equiv-properties} provides 
a useful viewpoint on properties  of the floor quotient order. 
For example,
\begin{enumerate}[(i)]
\item
The cutting property, Theorem~\ref{thm:equiv-properties} (1),
makes it clear that the floor quotient relation refines the divisibility relation $(\NNplus, \divides)$.

\item
The covering property, Theorem~\ref{thm:equiv-properties} (2), makes it clear that the floor quotient relation is transitive.

\item
The strong reminder property, Theorem~\ref{thm:equiv-properties} (4), makes it clear that floor quotients of $n$ 
occur naturally in symmetric pairs $(d,k)$.

\item
The tipping-point property and involution-duality property, Theorem~\ref{thm:equiv-properties} (5)-(6), 
provide easy direct tests for whether $d$ is a floor quotient of $n$.
\end{enumerate}

\end{rmk} 

\begin{proof}
Let  $[x,y)$ 
denote a half-open interval 
of $\RR$.

 $(1) \Leftrightarrow (2).$ 
Suppose  $d= \floor{ \frac{n}{k} }$ for some $k$, which is property (1).
Then $n= kd +r $ with $0 \le r \le k-1$,
which implies
 $[n, n+1) \subseteq [kd, kd+ k) =k[d, d+1)$.
This is the covering property (2) for $(d,n)$. 
These steps are all reversible, 
so the properties are equivalent.

$(2) \Leftrightarrow (3).$ 
The direction $(2) \Rightarrow (3)$ is immediate. For 
$(3) \Rightarrow (2)$ if $k[d , d+1)$ contains some $x \in [n,n+1)$ then it necessarily
contains the half-open interval $[n, n+1)$, 
because $[kd, kd+k)$ is a half-open interval with integer endpoints.

$(1) \Leftrightarrow (5)$.
For positive integers $\dd,n$, 
we define the {\em cutting length set} of $(d,n)$ as
$$
\cutset{\dd}{n} = \{ k \in \NNplus : \dd = \floor{n / k} \}.
$$
Property (1) says that this set is nonempty.
We have 
$$
\cutset{\dd}{n} = \{ k \in \NNplus : \dd \leq n/k < \dd + 1\} .
$$
The lower bound $\dd \leq \frac{n}{k} $
is equivalent to 
$k \leq \frac{n}{\dd}
\Leftrightarrow
k \leq \floor{ \frac{n}{\dd} }$.
The upper bound $\frac{n}{k} < \dd + 1$
is equivalent to
$ \frac{n}{\dd +1} < k
\Leftrightarrow \floor{ \frac{n}{\dd +1} } < k$.
Therefore
\begin{equation} 
\label{eqn:Kd-bound} 
\cutset{\dd}{n} = \{ k \in \NNplus : \floor{ \frac{n}{\dd +1}} < k \leq \floor{ \frac{n}{\dd} } \},
\end{equation} 
so $\cutset{\dd}{n}$ is nonempty if and only if $ \floorfrac{n}{\dd+1} < \floorfrac{n}{\dd}$,
which is the tipping point property (5).

$(1) \Leftrightarrow (6)$.
The direction $(6) \Rightarrow (1)$ is clear.
Now suppose (1) holds. 
The proof  of  $(1) \Leftrightarrow (5)$
established  \eqref{eqn:Kd-bound}, showing that 
$$
d = \floor{ \frac{n}{k} } \quad\text{is equivalent to }\quad 
\floor{ \frac{n}{d+1} } < k \leq \floor{ \frac{n}{d} } .
$$
Property (1) says that  
$\cutset{\dd}{n} = \{ k \in \NNplus : \dd = \floor{n / k} \}$
is nonempty, hence 
 it must contain $k=\floor{\frac{n}{d}}$,
which is  the involution-duality property (6).

$(4) \Leftrightarrow (6)$.
Suppose property (4) holds, then clearly $(4) \Rightarrow (1)$, and 
we have already shown $(1) \Rightarrow (6)$, so property (6) holds.

Now suppose property (4) does not hold for $k$;
we will show  property (6) does not hold. 
For a given $(d,n)$, applying division with remainder gives  
$n = dk + r$
for nonnegative integers $k,r$ with  $0 \leq r < d$.
In particular, $k = \floor{n/d}$ and  $r = n - d\floor{n/d}$.
Property (4) does not hold exactly when the remainder $r$
satisfies $r \geq k$.
In that case, we have  
$$
\frac{n}{ \floor{n/d}}
= \frac{dk + r}{k}
= d + \frac{r}{k} \geq d + 1,
$$
which implies
$\floorfrac{n}{\floor{n/d}} \geq d+1,$
so that property (6) does not hold. 
\end{proof}

We now 
show that the  equivalent properties 
in Theorem \ref{thm:equiv-properties} define 
an approximate divisor partial order on $\NNplus$,
as stated in Theorem \ref{thm:approx-order2}. 
 

\begin{thm}\label{thm:approx-order2} 
 The floor quotient relation $ \AD $
 defines a partial order  $\ADset \coloneqq ( \NNplus, \AD)$.
 The partial order $\ADset$ is an approximate divisor order.
\end{thm}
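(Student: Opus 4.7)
The plan is to deduce Theorem \ref{thm:approx-order2} directly from the equivalent characterizations already established in Theorem \ref{thm:equiv-properties}. I would verify reflexivity, antisymmetry, and transitivity for $\AD$, and then check the two axioms defining an approximate divisor order.

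First I would dispose of the easy axioms using the cutting property (1). Reflexivity $d \AD d$ is immediate by taking $k=1$, so that $d = \floor{d/1}$. For the refinement of the divisor order, if $d \divides n$ then $k = n/d$ is a positive integer and $d = \floor{n/k}$, so $d \AD n$. For the refinement by the additive order, if $d \AD n$ then $d = \floor{n/k} \leq n/k \leq n$ for some $k \geq 1$. Antisymmetry follows immediately: $d \AD n$ and $n \AD d$ give $d \leq n$ and $n \leq d$, so $d = n$.

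The main step is transitivity, and for this I would use the covering property (2), which behaves beautifully under composition with integer dilations. Suppose $d \AD e$ and $e \AD n$. Then there exist positive integers $k_1, k_2$ with
\[
  k_1 [d, d+1) \supseteq [e, e+1) \qquad \text{and} \qquad k_2 [e, e+1) \supseteq [n, n+1).
\]
Dilating the first containment by the positive integer $k_2$ preserves the inclusion, yielding
\[
  (k_1 k_2) [d, d+1) = k_2 \bigl( k_1 [d, d+1) \bigr) \supseteq k_2 [e, e+1) \supseteq [n, n+1).
\]
By the covering property (2) applied with cutting length $k_1 k_2$, this gives $d \AD n$, so $\AD$ is transitive.

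I expect no real obstacle: the equivalence of properties (1) and (2) in Theorem \ref{thm:equiv-properties} does all the work, since the covering property converts the potentially awkward nested-floor composition $\floor{\frac{1}{k_2}\floor{\frac{n}{k_1}}}$ into a clean inclusion of half-open intervals with integer endpoints. The one small thing to notice is that dilation by a positive integer preserves inclusions of such intervals, which is the reason the covering formulation, rather than the intersection formulation (3), is the right tool here. Combining the three partial order axioms with the two refinement properties completes the proof that $\ADset = (\NNplus, \AD)$ is an approximate divisor order.
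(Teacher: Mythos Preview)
Your proof is correct and essentially identical to the paper's own argument: both use $k=1$ for reflexivity, the covering property (2) for transitivity via the inclusion $(k_1 k_2)[d,d+1) \supseteq [n,n+1)$, and the cutting property (1) for the two approximate divisor axioms. The only cosmetic difference is that you derive antisymmetry from the already-established fact $d \AD n \Rightarrow d \le n$, whereas the paper states antisymmetry first and the refinement by the additive order afterward.
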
 

\begin{proof}
To show that $\ADset = (\NNplus, \AD)$
is a partial order, 
it must be checked that $\AD$ is reflexive, antisymmetric and transitive.
 Taking $k=1$ certifies that  $n \AD n$  so the relation $\AD$ is reflexive. 
 If  $d \AD n$ and $d\neq n$
 then it is clear that $d < n$ (additively),
so the relation $\AD$ is antisymmetric. 
Finally we verify the transitivity property, 
which  asserts that $ d \AD m $ and $m \AD n $
 implies $d \AD n $.  
By Theorem~\ref{thm:equiv-properties} (1) $\Leftrightarrow$ (2), the  assertions  $ d \AD m $ and $m \AD n $ imply
 there exist positive integers $k_1, k_2$ such that $[m, m +1) \subseteq k_1[d, d+1) $
 and $[n, n+1) \subseteq k_2[m, m+1)$. 
 It follows that  
 $[n, n+1) 
 \subseteq k_2 k_1[d, d+ 1)$,
 which certifies that  $(d, n)$ 
 satisfies Theorem~\ref{thm:equiv-properties} (2), proving transitivity.
 
 Finally we verify
 this  partial order on $\NNplus$ is an approximate divisor ordering. 
 We have $d \AD n$ implies $d \le n$, since $\floor{ \frac{n}{k} } \le n$ for any $k \geq 1$. 
 On the other hand if $d\divides n$,
 then $d = \frac{n}{k}$ for some integer $k$,
 hence $d \AD n $ since $d = \floorfrac{n}{k}$ for the same $k$.
\end{proof} 

%
%

\subsection{Cutting length sets}\label{subsec:32}


\begin{defi}\label{def:cut-length} 
The {\em cutting length set} $\cutset{d}{n}$ of positive integers $d$ and $n$ is the set
\begin{equation}\label{eq:cutset}
\cutset{d}{n} \coloneqq \{ k\in \NNplus :  \; d= \floor{ \frac{n}{k} } \} .
\end{equation} 
\end{defi}

In terms of the $n$-floor reciprocal map $\J_n(k) = \floor{n/k}$  \eqref{eqn:Phi-n}, we have
$$\cutset{d}{n} 
= \J_n^{-1}(d)
=\{ k \in \NNplus : \J_n(k) = d \}.
$$


\begin{lem}\label{lem:cutting-size}
The cutting length sets $\cutset{d}{n}$ for $d \in \ADset[1,n]$ have
the following properties.
\begin{enumerate}[(a)]
\item
The members of a cutting length set are given by
\[
k \in \cutset{d}{n}
\qquad\text{if and only if}\qquad 
\floorfrac{n}{d+1} < k \leq \floorfrac{n}{d}.
\]

\item
The cardinality of $\cutset{d}{n}$ 
is $|\cutset{d}{n}| = \floorfrac{n}{d} - \floorfrac{n}{d+1}$.

\item
As $d \in \ADset[1,n]$ varies, 
the sets $\cutset{d}{n}$ 
partition the additive interval $\sA[1,n]$. 
\end{enumerate}
\end{lem}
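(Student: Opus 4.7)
The plan is that all three parts follow quickly from work already carried out in the proof of Theorem~\ref{thm:equiv-properties}, so there is no serious obstacle. The main task is to organize these observations cleanly and to make the connection with the $n$-floor reciprocal map explicit.

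For part (a), I would simply extract and restate the computation already performed within the proof of the equivalence $(1)\Leftrightarrow(5)$ in Theorem~\ref{thm:equiv-properties}. Starting from the defining equality
\[
\cutset{d}{n} = \{k \in \NNplus : d \leq n/k < d+1\},
\]
the lower bound $d \leq n/k$ is equivalent (since $k \in \NNplus$) to $k \leq n/d$, which for integer $k$ is the same as $k \leq \floor{n/d}$. Symmetrically, the strict upper bound $n/k < d+1$ is equivalent to $k > n/(d+1)$, i.e. $k > \floor{n/(d+1)}$. Combining the two bounds yields the description in (a). This is precisely equation~\eqref{eqn:Kd-bound} already established.

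For part (b), the formula is an immediate consequence of (a): the number of integers $k$ lying in the half-open interval $(\floor{n/(d+1)}, \floor{n/d}]$ is $\floor{n/d} - \floor{n/(d+1)}$.

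For part (c), I would argue via the $n$-floor reciprocal map $\J_n: k \mapsto \floor{n/k}$. First observe that for $k \in \sA[1,n] = \{1,2,\ldots,n\}$ the value $\J_n(k) = \floor{n/k}$ is a well-defined positive integer, and by definition of the floor quotient interval it lies in $\ADset[1,n]$; conversely, every element of $\ADset[1,n]$ arises as $\J_n(k)$ for some $k \in \sA[1,n]$. Since $\cutset{d}{n} = \J_n^{-1}(d)$, the sets $\{\cutset{d}{n} : d \in \ADset[1,n]\}$ are exactly the (nonempty) fibers of the surjection $\J_n: \sA[1,n] \twoheadrightarrow \ADset[1,n]$, and hence they partition $\sA[1,n]$. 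One could alternatively derive (c) directly from (a) by noting that the intervals $(\floor{n/(d+1)}, \floor{n/d}]$ for $d = 1, 2, \ldots, n$ telescope to cover $(0, n] \cap \ZZ = \sA[1,n]$ disjointly, with nonempty pieces indexed precisely by $d \in \ADset[1,n]$ (by the tipping-point characterization, Theorem~\ref{thm:equiv-properties}(5)).
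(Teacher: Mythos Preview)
Your proposal is correct and follows essentially the same approach as the paper: parts (a) and (b) are extracted from the computation establishing \eqref{eqn:Kd-bound} in the proof of Theorem~\ref{thm:equiv-properties} $(1)\Leftrightarrow(5)$, and part (c) is deduced from (a). Your fiber-of-$\J_n$ formulation for (c) is a nice way to package the argument, and your telescoping alternative is exactly what the paper's terse ``follows from (a)'' intends.
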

\begin{proof}
The first two statements are verified in the proof of Theorem~\ref{thm:equiv-properties}, (1) $\Leftrightarrow$ (5).
Statement (c) follows from statement (a).
\end{proof}

%
%
\begin{lem}\label{lem:floor-reciprocal-cutting}  
\hfill
\begin{enumerate}[(a)]
\item 
Suppose $d \AD n$.
The  cutting length  set $\cutset{d}{n}$
contains exactly one element  $k^*$ which 
is a floor quotient of $n$. 
This element $k^*$  is the  largest  element
in $\cutset{d}{n}$ in the additive  order. 
One has  $k^* = \J_n(d) $.

\item 
The map  $\J_n^{\circ 2}= \J_n \circ\ \J_n$  on 
the additive interval $\sA[1,n] = \{1,\ldots,n\}$ sends each $k$ 
to the least floor quotient $k^*$  that is
no smaller than $k$ in the additive order. 
The image $\J_n^{\circ 2}(k) = k^*$ is the largest member of the cutting length set  $\cutset{\floorfrac{n}{k}}{n}$.
\end{enumerate}
\end{lem}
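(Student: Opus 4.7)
The plan is to derive both parts directly from Lemma \ref{lem:cutting-size} together with the reciprocal-duality characterization established in Theorem \ref{thm:equiv-properties} (6).

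For part (a), I would first apply Lemma \ref{lem:cutting-size} (a), which describes $\cutset{d}{n}$ as the interval of positive integers $\{k : \floorfrac{n}{d+1} < k \leq \floorfrac{n}{d}\}$, so that the largest element under the additive order is $k^{\ast} := \floorfrac{n}{d} = \J_n(d)$. Taking $d$ itself as a cutting length exhibits $k^{\ast}$ as $\floor{n/d}$, hence $k^{\ast} \AD n$. For uniqueness, suppose $k \in \cutset{d}{n}$ also satisfies $k \AD n$. Then the reciprocal-duality property applied to $k$ gives $k = \floorfrac{n}{\floorfrac{n}{k}}$, and since $\floorfrac{n}{k} = d$ by hypothesis, this collapses to $k = \floorfrac{n}{d} = k^{\ast}$.

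For part (b), given $k \in \sA[1,n]$, set $d = \J_n(k) = \floorfrac{n}{k}$, so that $k \in \cutset{d}{n}$ by definition. Then $\J_n^{\circ 2}(k) = \J_n(d) = \floorfrac{n}{d}$ coincides with $\max \cutset{d}{n}$, and part (a) identifies this as the unique floor quotient of $n$ lying in $\cutset{d}{n}$; in particular $\J_n^{\circ 2}(k) \geq k$. To show this is the \emph{least} floor quotient of $n$ that is $\geq k$, I would invoke Lemma \ref{lem:cutting-size} (c): the cutting length sets partition $\sA[1,n]$, each is an interval of consecutive integers, and by part (a) each contains exactly one floor quotient, namely its maximum. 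Any floor quotient strictly between $k$ and $\max \cutset{d}{n}$ would have to lie in $\cutset{d}{n}$ while differing from its maximum, contradicting uniqueness.

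Because the structure is forced by the two results cited, there is no serious obstacle. The only slightly subtle point is recognizing that the reciprocal-duality identity $k = \floorfrac{n}{\floorfrac{n}{k}}$, valid precisely when $k$ is itself a floor quotient of $n$, is what singles out the maximum of each cutting length set as its distinguished floor-quotient member; this observation simultaneously supplies existence in part (a), uniqueness in part (a), and the minimality claim in part (b).
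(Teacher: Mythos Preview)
Your argument is correct. Part (a) matches the paper's proof essentially verbatim: both identify $k^\ast = \floor{n/d}$ as the maximum of the cutting set via Lemma~\ref{lem:cutting-size}, observe it is a floor quotient by definition, and deduce uniqueness from the reciprocal-duality characterization Theorem~\ref{thm:equiv-properties}~(6).

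Part (b) differs slightly in the justification of minimality. The paper argues that $\J_n$ is order-reversing for the additive order, hence $\J_n^{\circ 2}$ is order-preserving and fixes floor quotients; from this, any floor quotient $m \geq k$ satisfies $\J_n^{\circ 2}(k) \leq \J_n^{\circ 2}(m) = m$. You instead invoke the partition of $\sA[1,n]$ into consecutive-integer cutting sets (Lemma~\ref{lem:cutting-size}~(c)), each with its unique floor quotient sitting at the top, so no floor quotient can lie strictly between $k$ and $\max \cutset{d}{n}$. Both arguments are short and rest on the same underlying structure; the paper's is slightly more functional in flavor, yours slightly more combinatorial, but neither buys anything the other does not.
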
  

\begin{proof}
(a) By the equivalence Theorem~\ref{thm:equiv-properties} (6) $\Leftrightarrow$ Theorem~\ref{thm:equiv-properties} (1),
there is exactly one floor quotient of $n$ in the cutting length set $\cutset{d}{n}$,
assuming $\cutset{d}{n}$ is nonempty.
If $\cutset{d}{n}$ is nonempty, its unique floor quotient element 
must be $k^* = \floor{n/d}$.
The statement that $k^*$ is the largest element of $\cutset{d}{n}$
follows from the tipping point property (5) of Theorem~\ref{thm:equiv-properties}.

(b) 
For any $k\leq n$, it is straightforward to check that
$\J_n^{\circ 2}(k) \geq k$.
The value $\J_n^{\circ 2}(k)$ is a floor quotient of $n$,
since the image of $\J_n$ consists of floor quotients of $n$.
Moreover, $\J_n^{\circ 2}(k)$ must be the least floor quotient of $n$ no smaller than $k$,
because $\J_n^{\circ 2}$ is order-preserving in the additive order,
and acts as the identity on the floor quotients $\ADset [1,n]$.

To verify the claim that $\J_n^{\circ 2}(k)$ is order-preserving in the additive order, 
it suffices to observe that $\J_n$ is order-reversing in the additive order,
in the sense that
$$
j \leq k \qquad\Rightarrow\qquad 
\J_n(j) \geq \J_n(k).
$$
Therefore 
$j \leq k $
implies $\J_n^{\circ 2}(j) \leq \J_n^{\circ 2}(k)$
as claimed.
\end{proof} 

%
%
\subsection{Floor multiples}\label{sec:floor-multiples}  
Given an integer $d$ we  say $n$ is a {\em floor multiple} of $d$ if $d$ is a floor quotient of $n$.

\begin{defi}\label{def:36}
 The set $\ADmult(d)$ of floor multiples of $d$ is 
\begin{equation}\label{eq:floor-mult}
\ADmult(d) \coloneqq \{ n\in \NNplus : d \AD n \}. 
\end{equation} 
It is  the semi-infinite interval
$
\ADset[d,\infty) 
$
in the floor quotient partial order.
\end{defi}

We show below  that the set of  floor multiples $\ADmult(d)$ 
forms a numerical semigroup.

 A   {\em numerical semigroup} is a subset of $\NNplus$
that is closed under addition
and   contains all but finitely many elements of $\NNplus$.
 The {\em Frobenius number}  of a numerical semigroup is the largest integer
not belonging to the semigroup. 
Given positive integers $m_1,\ldots, m_d$, we let $\angles{m_1,\ldots,m_d}$ denote the subset of $\NNplus$ consisting of all nonnegative integer combinations of $m_1, \ldots, m_d$; this forms a numerical semigroup if and only if $\gcd(m_1,\ldots,m_d) = 1$.
 For general information on  numerical semigroups and of the
 Frobenius number of a numerical semigroup, see  
 Ram\'{i}rez Alfons\'{i}n \cite{RamirezA:05} and Assi and Garcia-S\'{a}nchez \cite{AssiG:16}. 
 
We now establish the  numerical semigroup property of floor multiples, 
and also determine their  Frobenius numbers and give  minimal generating sets for them.

%
%
%
%
\begin{thm}[Floor multiple numerical semigroups]
\label{thm:floor-multiple-struct}
 The set $\ADmult(d)$ of floor multiples of  $d$ is 
 a numerical semigroup.
 It has the following properties.
 \begin{enumerate}[(a)]
 \item 
 The largest integer not in the numerical semigroup $\ADmult(d)$
 is $d^2-1$.

 \item
 There are exactly $\frac{1}{2} (d-1)(d+2) = \frac{1}{2} (d^2+d - 2) $ positive integers not in $\ADmult(d)$.

 \item
 The  minimal generating set of $\ADmult(d)$
 has $d$ generators, with $\ADmult(d)= \angles{ \gamma_1, \gamma_2, \ldots, \gamma_{d}}$
 where
 $\gamma_i= i(d+1) - 1 =  (i-1)(d+1)+d$ 
 for $1 \le i \le d.$
  \end{enumerate}
\end{thm}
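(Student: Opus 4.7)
The plan is to obtain a clean description of $\ADmult(d)$ as a union of arithmetic intervals, from which all three claims fall out directly. By the cutting property of Theorem \ref{thm:equiv-properties}, $n$ is a floor multiple of $d$ precisely when $n$ lies in the half-open interval $[dk, (d+1)k)$ for some positive integer $k$, so $\ADmult(d) = \bigcup_{k \geq 1} \bigl( [dk, (d+1)k) \cap \NNplus \bigr)$, and the $k$-th piece contributes exactly $k$ integers.

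First I would verify the numerical semigroup property. Closure under addition is immediate: if $m \in [dk_1, (d+1)k_1)$ and $n \in [dk_2, (d+1)k_2)$, then $m + n \in [d(k_1+k_2), (d+1)(k_1+k_2))$, so $m+n \in \ADmult(d)$ with cutting length $k_1+k_2$. For cofiniteness together with part (a), I would compare the right endpoint $(d+1)k - 1$ of the $k$-th interval with the left endpoint $d(k+1)$ of the $(k+1)$-st: these are adjacent or overlap exactly when $k \geq d$. Hence the intervals for $k \geq d$ merge into a single tail $[d^2, \infty)$, while for $k = 1, \ldots, d-1$ they are pairwise disjoint, the last one ending at $d^2 - 2$. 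Since $d^2 \in \ADmult(d)$, the Frobenius number is exactly $d^2 - 1$, proving (a). For part (b), the disjoint sets $[dk, dk+k)$ for $1 \leq k \leq d - 1$ contribute $1 + 2 + \cdots + (d-1) = \frac{d(d-1)}{2}$ elements in $\{1, \ldots, d^2 - 1\}$, so the number of positive nonelements is $(d^2 - 1) - \frac{d(d-1)}{2} = \frac{1}{2}(d-1)(d+2)$, giving (b).

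For part (c), I would analyze $\ADmult(d)$ through its residue classes modulo $\gamma_1 = d$, essentially computing the Ap\'ery set of $\ADmult(d)$ with respect to $d$. From the interval description, an element $qd + r$ with $0 \leq r \leq d - 1$ lies in $\ADmult(d)$ if and only if either $r = 0$ and $q \geq 1$, or $r \geq 1$ and $q \geq r + 1$. Hence the smallest element of $\ADmult(d)$ in residue class $r$ is $\gamma_1 = d$ when $r = 0$ and $(r+1)d + r = \gamma_{r+1}$ when $1 \leq r \leq d - 1$. This immediately yields $\ADmult(d) = \angles{\gamma_1, \ldots, \gamma_d}$, since every $n \in \ADmult(d)$ can be written as $\gamma_{r+1} + q'\gamma_1$ or $q'\gamma_1$ with $q' \geq 0$.

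The main obstacle is the minimality of this generating set: no $\gamma_i$ may be written as a sum of two positive elements of $\ADmult(d)$. I would handle this by a residue-class argument. Suppose $\gamma_i = a + b$ with $a, b$ positive in $\ADmult(d)$. If one summand, say $a$, has residue $0 \pmod d$, then $a \geq d$ and $b$ lies in the same class as $\gamma_i$, forcing $b \geq \gamma_i$ and $a + b > \gamma_i$, a contradiction. Otherwise both summands have nonzero residues $r_a, r_b \in \{1, \ldots, d-1\}$, so $a \geq \gamma_{r_a + 1}$ and $b \geq \gamma_{r_b + 1}$. The congruence $r_a + r_b \equiv i - 1 \pmod d$ forces $r_a + r_b = i - 1$ or $r_a + r_b = i - 1 + d$; plugging into $\gamma_j = j(d+1) - 1$ shows in both subcases that $\gamma_{r_a + 1} + \gamma_{r_b + 1} > \gamma_i$, completing the minimality proof and hence (c).
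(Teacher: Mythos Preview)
Your proof is correct and follows essentially the same route as the paper: the interval decomposition $\ADmult(d)=\bigcup_{k\ge 1}[dk,(d+1)k)$ is exactly the content of the paper's Lemmas~\ref{lem:multiples-add} and~\ref{lem:multiples-elem} (closure under addition and the strong-remainder characterization), and both arguments identify $\{\gamma_1,\dots,\gamma_d\}$ as the Ap\'ery set with respect to $d$. The only minor variation is in the minimality step for (c): the paper shows $\gamma_i\notin\langle\gamma_1,\dots,\gamma_{i-1}\rangle$ by observing $\gamma_i-\gamma_\ell\notin\ADmult(d)$, whereas you check irreducibility directly via a residue-class case split, but both are short once the Ap\'ery description is in hand.
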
 

Before proving the result, we give an example. 
%
%
\begin{exa}
\label{exa:57} 
The floor multiples of  $n=4$ are 
$\ADmult(4) = \{4\} \cup \{8,9\} \cup \{12, 13, 14\} \cup [16, \infty) $.
The numbers that are not 
in $\ADmult(4)$ are 
$\{ 1,2,3 \}\cup\{ 5, 6,7\}\cup\{ 10, 11\}\cup\{ 15\}.$
The minimal generating set of $\ADmult(4)$
is $\angles{4, 9, 14, 19}$. 
The Frobenius number of $\ADmult(4)$
is $15 = 4^2 - 1$.
\end{exa}

We prove Theorem \ref{thm:floor-multiple-struct} via two preliminary lemmas.

\begin{lem}\label{lem:multiples-add}
The set $\ADmult(d)$ of floor multiples of  $d$
 is closed under addition.
\end{lem}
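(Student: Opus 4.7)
The plan is to exploit the covering characterization of the floor quotient relation, Theorem~\ref{thm:equiv-properties} (1) $\Leftrightarrow$ (2), which converts the arithmetic statement $d \AD n$ into the geometric statement that some integer dilation $k[d,d+1)$ of the unit interval $[d,d+1)$ covers $[n,n+1)$. Closure of covering under addition will follow because the additive combination of two such coverings is again a covering.

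Concretely, given $m, n \in \ADmult(d)$, I would first produce positive integers $k_1, k_2$ (cutting lengths) satisfying
\[
  k_1 [d, d+1) \supseteq [m, m+1)
  \qquad\text{and}\qquad
  k_2 [d, d+1) \supseteq [n, n+1).
\]
I then claim that the sum $k := k_1 + k_2$ witnesses $d \AD m+n$. To verify the claim I would unpack each covering into its two half-line inequalities, namely $k_i d \le \,\cdot\,$ and $\,\cdot\, + 1 \le k_i(d+1)$, and add the pair for index $1$ to the pair for index $2$. This yields $(k_1+k_2)d \le m+n$ and $(m+n)+1 \le (k_1+k_2)(d+1)$ (in fact with slack $1$ in the second inequality, since the naive sum gives $m+n+2 \le (k_1+k_2)(d+1)$). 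These two inequalities together are exactly the assertion $(k_1+k_2)[d,d+1) \supseteq [m+n, m+n+1)$, so a final application of Theorem~\ref{thm:equiv-properties} concludes $d \AD m+n$, i.e. $m+n \in \ADmult(d)$.

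No step presents a genuine obstacle; the whole argument amounts to the observation that the integer-endpoint interval covering relation in Theorem~\ref{thm:equiv-properties} (2) is stable under addition of the dilation factors. Alternatively, one could run the proof directly from characterization (1) via division-with-remainder, writing $m = dk_1 + r_1$ and $n = dk_2 + r_2$ with $0 \le r_i < k_i$ and checking that $r_1 + r_2 < k_1 + k_2$, but the covering formulation is the most transparent and makes the additivity manifest.
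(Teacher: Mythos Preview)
Your proof is correct and is essentially the same as the paper's: both arguments show that if $k_1$ and $k_2$ are cutting lengths witnessing $d\AD m$ and $d\AD n$, then $k_1+k_2$ is a cutting length witnessing $d\AD m+n$. The paper phrases this via the mediant inequality $\frac{m}{k_1}\le\frac{m+n}{k_1+k_2}\le\frac{n}{k_2}$ applied to characterization (1), whereas you add the endpoint inequalities of characterization (2) directly; these are the same computation up to clearing denominators.
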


\begin{proof}
Suppose $n_1$ and $n_2$ are  floor multiples of $d$.
Then there  exist positive integers $k_1$ and $k_2$ such that
\[ 
d = \floor{ \frac{n_1}{k_1}} = \floor{ \frac{n_2}{k_2} }.
\]
Without loss of generality, assume that
$ \frac{n_1}{k_1} \leq \frac{n_2}{k_2}$.
Then 
\[ \frac{n_1}{k_1} \leq \frac{n_1 + n_2}{k_1 + k_2} \leq \frac{n_2}{k_2},\]
so it follows that 
$ d = \floor{\frac{n_1+n_2}{k_1+k_2}}$.
Thus $n_1+n_2$ is a floor multiple of $d$ as desired.
\end{proof}

The next lemma determines the Frobenius set $\NNplus \smallsetminus \ADmult(d)$  of this numerical semigroup.
We show that the Frobenius set of $\ADmult(d)$ is contained
between $0$ and $d^2$.

%
\begin{lem}
\label{lem:multiples-elem} 
Fix $d \geq 1$ and let $\ADmult(d)$ be the set of floor multiples of $d$.

\begin{enumerate}[(a)]
\item 
If  $n\geq d^2$, then $n\in \ADmult(d)$.

\item 
If $1 \leq n < d^2$, 
let $n = dk + j$ for integers $k,j$ satisfying $0 \leq k < d$
and $0 \leq j < d$.
Then $n \in \ADmult(d)$
if and only if $j < k$.
\end{enumerate}
\end{lem}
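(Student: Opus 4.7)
The plan is to leverage the \emph{strong remainder property} from Theorem~\ref{thm:equiv-properties}(4): $n \in \ADmult(d)$ if and only if there exists a positive integer $k'$ with $n = dk' + r$ where $0 \le r < \min(d,k')$. Both parts then reduce to finding such a $k'$, and the candidate $k' = \floor{n/d}$ is natural in view of Lemma~\ref{lem:floor-reciprocal-cutting}(a).

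For part (a), assuming $n \ge d^2$, I would set $k' = \floor{n/d}$ and let $r = n - dk'$, so that $0 \le r < d$ by the standard division algorithm. Since $n \ge d^2$ gives $k' \ge d$, the remainder satisfies $r < d \le k'$, hence the strong remainder property holds and $n \in \ADmult(d)$.

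For part (b), write $n = dk + j$ with $0 \le k, j < d$, so that $k = \floor{n/d}$ and $j$ is the usual remainder. I would show that the only candidate cutting length producing a valid strong-remainder representation is $k' = k$. Indeed, any valid $k'$ must satisfy $n \ge dk'$, forcing $k' \le \floor{n/d} = k$. On the other hand, if $k' < k$, then the remainder $r = n - dk' = d(k-k') + j \ge d$, violating $r < d$. So only $k' = k$ is possible, giving $r = j$, and the condition $r < \min(d,k') = \min(d,k) = k$ (since $k < d$) becomes exactly $j < k$. This proves the equivalence.

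There is no serious obstacle here; the main subtlety to address carefully is the edge case $k = 0$, which corresponds to $n < d$. In that case the condition $j < k$ becomes $j < 0$, which is never satisfied, consistent with $n < d$ never being a floor multiple of $d$ (indeed $n \in \ADmult(d)$ implies $d \le n$). I would note this explicitly so the statement of part (b) covers all of $1 \le n < d^2$ uniformly.
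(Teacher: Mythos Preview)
Your proof is correct. You use the strong remainder property (Theorem~\ref{thm:equiv-properties}(4)) throughout, whereas the paper's own proof computes $\floor{n/k}$ and $\floor{n/(k+1)}$ directly from the cutting property; the paper in fact notes your route as an alternative in Remark~\ref{rmk:46}. Your argument has the slight advantage that the ``only if'' direction in (b) is handled by a uniqueness-of-representation argument rather than by separately checking that $\floor{n/(k+1)} < d$, which makes the logic a bit cleaner; the paper's direct computation, on the other hand, avoids appealing to the equivalence in Theorem~\ref{thm:equiv-properties}.
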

\begin{proof}
(a) Suppose  $n \ge d^2$. 
Then we  can express
$n = k d+ j$ with 
$0  \le j < d$
and 
$k = \floorfrac{n}{d} \ge d$. 
We 
then have
$
\floor{ \frac{n}{k} } = d + \floor{ \frac{j}{k} } = d
$
because $j < d \le k$.
This shows that $d \AD n$.

(b)
Suppose $n = kd + j$ where $0 \leq j,k < d$.
If $k=0$, then $n = j < d$ so $d \notAD n$.
In this case $n \not\in \ADmult(d)$ and $j \geq k$.

Now suppose that $k \geq 1$.
If $j < k$, 
then  we have $\floorfrac{n}{k} = d + \floorfrac{j}{k} = d$,
so $d \AD  n$.
On the other hand if $j \geq k$, 
then $\floorfrac{n}{k} = d + \floorfrac{j}{k} \geq d+1$,
while 
$\floorfrac{n}{k+1}  = \floor{d - \frac{d-j}{k+1}} < d$.
This shows there is no integer $k$ such that $ \floorfrac{n}{k} =d$,
so $d \notAD n$.
\end{proof}

\begin{rmk}\label{rmk:46} 
Lemma~\ref{lem:multiples-elem} can alternatively be verified using the strong remainder property, Theorem~\ref{thm:equiv-properties} (4).
\end{rmk}

\begin{proof}[Proof of Theorem \ref{thm:floor-multiple-struct}]
Lemma~\ref{lem:multiples-add} shows $\ADmult(d)$ is an additive semigroup and Lemma~\ref{lem:multiples-elem} (a)
then shows it  is a  numerical semigroup.

(a) 
The condition in Lemma~\ref{lem:multiples-elem} (b) implies that $d^2 -1 = d(d-1) + (d-1)$ 
is not in $\ADmult(d)$.
On the other hand, Lemma~\ref{lem:multiples-elem} (a) implies that $\ADmult(d)$ contains all integers greater than $d^2 - 1$.

(b) 
To count the size of $\NNplus \setminus \ADmult(d)$,
we apply the condition in Lemma~\ref{lem:multiples-elem} (b).
There are $d-1$ positive integers less than $d$ which are all not in $\ADmult(d)$.
For $n$ in the range $k d \le n < (k +1) d$, 
for $k = 1,2,\ldots, d-1$, 
there  are exactly $d- k$ integers 
not in $\ADmult(d)$,
namely $n = k d + j$
with 
$k \leq j \le d-1$. 
The total size of $\NNplus \setminus \ADmult(d)$ is
\[
	d-1 + \sum_{k=1}^{d-1} (d - k) = d-1 + \frac12(d-1)d = \frac{1}{2} {(d-1)(d+2)} .
\]

(c)
Each generator $\gamma_i = id + (i-1)$ is in $\ADmult(d)$ 
by Lemma~\ref{lem:multiples-elem},
so we have the containment 
$\angles{ \gamma_1,\ldots, \gamma_d } \subset \ADmult(d)$.
The reverse containment $\ADmult(d) \subset \langle \gamma_1,\ldots, \gamma_d \rangle$
 holds because 
the generating set
$\gamma_1,\gamma_2,\ldots,\gamma_d$ contains the minimal element of $\ADmult(d)$ in each residue class modulo $d$,
which also follows from Lemma~\ref{lem:multiples-elem}.

It remains to prove  that $\gamma_1,\ldots,\gamma_d$ is  minimal as a generating set  
of $\ADmult(d)$,
i.e. that no strict subset of $\{\gamma_1,\ldots, \gamma_d\}$ generates $\ADmult(d)$. 
Since $\gamma_1 < \gamma_2 < \cdots < \gamma_d$,
it suffices to show that $\gamma_i$ is not contained in the numerical semigroup $\langle \gamma_1, \ldots, \gamma_{i-1} \rangle$.

Suppose for the sake of contradiction that $\gamma_i\in \langle \gamma_1, \ldots, \gamma_{i-1} \rangle$.
Then for some $\ell < i$, 
we have 
$
\gamma_i- \gamma_\ell \in \langle \gamma_1, \ldots, \gamma_{i-1} \rangle.
$
However, 
we have the inclusion
$\langle \gamma_1, \ldots, \gamma_{i-1} \rangle \subset \ADmult(d)$,
and 
Lemma~\ref{lem:multiples-elem} implies that 
\[
\gamma_i - \gamma_\ell  
= (i-\ell)d - (i-\ell)
 \not\in \ADmult(d) .
\]
This contradiction implies that $\gamma_i \not \in \langle \gamma_1, \ldots, \gamma_{i-1} \rangle$,
so $\gamma_1,\ldots,\gamma_d$ is the minimal  generating set  
of $\ADmult(d)$ as claimed.
\end{proof}

%
%
\section{Floor quotient initial intervals \texorpdfstring{$\ADset[1,n]$}{Q[1,n]}}
\label{sec:initial-intervals}

The initial intervals $\ADset[1,n]$ of the floor quotient order are preserved as a set by the  $n$-floor reciprocal map $\J_n$.
This interval subdivides into a set union of  two pieces $\ADsetsmall(n)$ and $\ADsetlarge(n)$
having very different properties.

%
%
\subsection{Complementation action  $\J_n$ on $\ADset[1,n]$}\label{subsec:interval-involution} 

%
\begin{lem}\label{lem:reciprocal-involution} 

The $n$-floor reciprocal map $\J_n(d) = \floor{n/d}$  acts as an involution when  restricted to the domain
$\ADset[1,n]$ of all floor quotients of $n$, i.e.
\begin{equation}\label{eq:37a}
\J_n^{\circ 2} (d) = d \quad \mbox{for all} \quad  d \in \ADset[1,n].
\end{equation}
The set $\ADset[1,n]$ is the largest subset of $\{1,\ldots,n\}$ on which $\J_n$ acts as an involution. 
\end{lem}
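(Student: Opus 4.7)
The plan is to derive both claims directly from the equivalence Theorem~\ref{thm:equiv-properties}, in particular the reciprocal-duality characterization (6) of the floor quotient relation.

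For the involution claim \eqref{eq:37a}, I would start from an arbitrary $d \in \ADset[1,n]$. By definition $d$ is a floor quotient of $n$, so by Theorem~\ref{thm:equiv-properties} (1) $\Leftrightarrow$ (6) the reciprocal-duality identity
\[
d = \floor{\frac{n}{\floor{n/d}}}
\]
holds. Since $\J_n(d) = \floor{n/d}$, the right-hand side is exactly $\J_n(\J_n(d))$, so $\J_n^{\circ 2}(d) = d$. This handles part one.

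For the maximality claim, I would argue the converse direction. Suppose $k \in \{1,\ldots,n\}$ is any integer satisfying $\J_n^{\circ 2}(k) = k$. Setting $m = \J_n(k) = \floor{n/k}$, the fixed-point condition reads $k = \J_n(m) = \floor{n/m}$. Thus $k$ is exhibited as a floor quotient of $n$, certifying $k \in \ADset[1,n]$. Combining with part one, $\ADset[1,n]$ is precisely the fixed set of $\J_n^{\circ 2}$, and in particular no subset strictly larger than $\ADset[1,n]$ inside $\{1,\ldots,n\}$ can have $\J_n$ act as an involution on it.

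I do not expect any step to be a serious obstacle: the heart of the argument is already packaged in Theorem~\ref{thm:equiv-properties}, and the maximality assertion is essentially the contrapositive of the implication (1) $\Rightarrow$ (6) together with the fact that if $\J_n^{\circ 2}(k) = k$ then $k$ lies in the image of $\J_n$. The one small point worth double-checking is that the definition of $\J_n$ in \eqref{eqn:Phi-n} makes sense on the full domain $\sA[1,n]$, so that the composition $\J_n^{\circ 2}(k)$ is well-defined for every candidate $k$; this is immediate since $1 \le \floor{n/k} \le n$ whenever $1 \le k \le n$.
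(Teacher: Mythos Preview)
Your proposal is correct and follows essentially the same approach as the paper: both parts are immediate consequences of the equivalence $(1)\Leftrightarrow(6)$ in Theorem~\ref{thm:equiv-properties}. Your maximality argument, observing that $\J_n^{\circ 2}(k)=k$ exhibits $k$ as lying in the image of $\J_n$, is a slightly more hands-on phrasing of what the paper compresses into citing the implication $(6)\Rightarrow(1)$, but the content is the same.
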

\begin{proof}
Here \eqref{eq:37a} is  the involution-duality property (6) of Theorem~\ref{thm:equiv-properties};
in particular, we showed that Theorem~\ref{thm:equiv-properties} (1) $\Rightarrow$ Theorem~\ref{thm:equiv-properties} (6).
The reverse implication 
Theorem~\ref{thm:equiv-properties} (6) $\Rightarrow$ Theorem~\ref{thm:equiv-properties} (1)
shows that $\J_n$ is not an involution on any set strictly larger than $\ADset[1,n]$.
\end{proof}

The involution property  of $\J_n$ permits
separation  of $\ADset[1,n]$  into two halves, consisting of ``small floor quotients'' and ``large floor quotients,''
which may overlap in one element. Let
\begin{equation}
\label{eq:interval-halves}
\ADsetsmall(n) = \{d : d \AD n,\, d \leq \sqrt{n} \}
\qquad\text{and}\qquad
\ADsetlarge(n) = \{d : d \AD n,\, \J_n(d) \leq \sqrt{n} \}.
\end{equation}

The following  result 
 gives a set-theoretic complementation operation
relating the sets $\ADsetsmall(n)$ and $\ADsetlarge(n)$ under the map $\J_n$.
It is due to Cardinal \cite[Proposition 4]{Cardinal:10} (in a different notation). 

%
%
\begin{prop}
[Almost complementation for initial floor quotient intervals]
\label{prop:interval-complement}
Given  an integer $n \geq 1$,
let $\ADsetsmall(n)$ and $\ADsetlarge(n)$ denote the sets \eqref{eq:interval-halves}.
\begin{enumerate}[(a)]
\item
We have
$ 
\ADset[1,n] = \ADsetsmall (n) \bigcup \ADsetlarge(n).
$
The  map $\J_n: k \mapsto \floor{n/k}$  restricts to a bijection sending $\ADsetlarge(n)$ to
$\ADsetsmall(n)$ 
and to a bijection sending $\ADsetsmall(n)$ to
$\ADsetlarge(n)$. 

\item
Moreover, letting $s= \floor{\sqrt{n}}$,  one has
$$
\ADsetsmall(n) = \{ 1, 2, 3, \ldots, s\},
\qquad\text{and}\qquad
\ADsetlarge(n) 
= \{ n,  \floorfrac{n}{2}, \floorfrac{n}{3}, \ldots, 
\floorfrac{n}{s} \}.
$$
The intersection of  $\ADsetsmall(n)$ and $\ADsetlarge(n)$ is
\begin{equation*}
\ADsetsmall(n) \bigcap \ADsetlarge(n) = \begin{cases}
\{s\} &\text{if } \, s^2 \le n < s(s+1) \\
\emptyset &\text{if } \,   s(s+1)\le n < (s+1)^2.
\end{cases}
\end{equation*}
\end{enumerate}
\end{prop}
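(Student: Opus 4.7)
The plan is to prove Proposition \ref{prop:interval-complement} in two stages matching parts (a) and (b), using Lemma \ref{lem:reciprocal-involution} (the involution property of $\J_n$ on $\ADset[1,n]$) and the equivalent characterizations of Theorem \ref{thm:equiv-properties} as the main tools.

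For part (a), I would first verify the set union $\ADset[1,n] = \ADsetsmall(n) \cup \ADsetlarge(n)$ by a dichotomy on $d \AD n$ based on whether $d \le \sqrt{n}$: if $d \AD n$ and $d > \sqrt{n}$, then $n/d < \sqrt{n}$, hence $\J_n(d) = \floor{n/d} < \sqrt{n}$, placing $d \in \ADsetlarge(n)$. For the bijection claim, if $d \in \ADsetlarge(n)$ then $\J_n(d)$ lies in $\ADset[1,n]$ automatically (as the image of $\J_n$ consists of floor quotients of $n$) and $\J_n(d) \le \sqrt{n}$ by definition, so $\J_n(d) \in \ADsetsmall(n)$. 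Conversely, if $d \in \ADsetsmall(n)$ and $e = \J_n(d)$, then Lemma \ref{lem:reciprocal-involution} gives $\J_n(e) = d \le \sqrt{n}$, putting $e \in \ADsetlarge(n)$. The involution property $\J_n^{\circ 2} = \mathrm{id}$ on $\ADset[1,n]$ then makes these two maps mutually inverse bijections.

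For part (b), I would establish $\ADsetsmall(n) = \{1, 2, \ldots, s\}$ with $s = \floor{\sqrt{n}}$ by showing every $d$ with $1 \le d \le s$ is a floor quotient of $n$, applying the strong remainder property Theorem \ref{thm:equiv-properties}(4). Writing $n = dk + r$ with $k = \floor{n/d}$ and $0 \le r < d$, the bound $d^2 \le n$ forces $k \ge d$, so $r < d \le k$, which certifies $d \AD n$ via property (4). The explicit form $\ADsetlarge(n) = \{n, \floor{n/2}, \ldots, \floor{n/s}\}$ then follows immediately from part (a), since $\ADsetlarge(n) = \J_n(\ADsetsmall(n))$.

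For the intersection formula, $d \in \ADsetsmall(n) \cap \ADsetlarge(n)$ iff $d \le s$ and $\floor{n/d} \le s$, and the latter rewrites as $d > n/(s+1)$, so the intersection is counted by integers in the half-open interval $(n/(s+1),\, s]$. In the range $s^2 \le n < s(s+1)$ one checks $s - 1 < n/(s+1) < s$, leaving only $d = s$; in the range $s(s+1) \le n < (s+1)^2$ one has $n/(s+1) \ge s$, so the interval is empty. I do not foresee a serious obstacle: the argument is essentially bookkeeping once the involution property and the strong remainder characterization are in hand, and the only step demanding care is the boundary analysis in the intersection formula, distinguishing the two subranges of $n$ between consecutive squares.
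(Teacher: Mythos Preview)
Your proposal is correct and follows essentially the same approach as the paper: the set-union dichotomy, the involution property of $\J_n$ for the bijection, the strong remainder characterization (Theorem~\ref{thm:equiv-properties}(4)) to show every $d \le s$ is a floor quotient, and an elementary boundary analysis for the intersection. The only cosmetic difference is in the intersection step, where the paper compares $s$ with $s^* = \floor{n/s}$ directly while you count integers in $(n/(s+1),\,s]$; these are equivalent formulations of the same calculation.
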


%
%
\begin{rmk} \label{rmk:involution-fixed-point} 
\begin{enumerate}[(i)]
\item The map $\J_n$ from $\ADset[1,n]$ to itself
 is order-reversing for the additive order, but 
 is not order-reversing for the floor quotient relation $\AD$.
Applied to elements $d \AD e$ in $\ADset[1,n]$,  either $\J_n(e) \AD \J_n(d)$ or $\J_n (e) \perp_1 \J_n (d)$.
 Corollary \ref{cor:410} gives a more precise statement.

\item Proposition  \ref{prop:interval-complement} (b) determines the fixed points of 
the floor reciprocal map $\J_n$ on $\ADset [1,n]$. 
It has a unique fixed point $s = \floor{\sqrt{n}}$
if $s^2 \le n < s(s+1)$, and it has no fixed point if
$s(s+1) \le n < (s+1)^2$.
\end{enumerate}
\end{rmk}

\begin{proof}
[Proof of Proposition~\ref{prop:interval-complement}]
(a) 
We first verify that $\ADset[1,n] = \ADsetsmall(n) \bigcup \ADsetlarge(n)$.
Suppose that $d \AD n$ and 
$d \not\in \ADsetsmall(n)$.
By definition of $\ADsetsmall(n)$, we have
$d > \sqrt{n}$ which implies 
\[
\frac{n}{d} <\sqrt{n} 
\qquad\text{hence}\qquad
\J_n(d) \coloneqq \floorfrac{n}{d} < \sqrt{n} .
\]
Thus $d \in \ADsetlarge(n)$.

Next we verify that $\J_n$ defines a bijection from $\ADsetlarge(n)$ to $\ADsetsmall(n)$.
It is clear from the definition of floor quotient that 
$\J_n(k) \AD n$ for any $k\leq n$.
By construction of $\ADsetlarge(n)$,
$\J_n$ sends $\ADsetlarge(n)$ 
to  $\ADsetsmall(n) = \{ k : k\AD n,\,  k \leq \sqrt{n}\}$.
Moreover, $\J_n^{\circ 2}$ restricts to the identity map on $\ADsetlarge(n)$
by the involution-duality property, Theorem~\ref{thm:equiv-properties} (6).
Since the composition
\[
\ADsetlarge(n) \xrightarrow{\J_n} \ADsetsmall(n) 
 \xrightarrow{\J_n} \ADsetlarge(n)
\]
is a bijection,
this implies 
\[
\ADsetlarge(n) \xrightarrow{\J_n} \ADsetsmall(n)
\quad\text{is injective,}\quad\text{and}
\quad
\ADsetsmall(n) \xrightarrow{\J_n} \ADsetlarge(n)
\quad\text{is surjective}.
\]
Similarly, the fact that $\J_n^{\circ 2}$ restricts to the identity map on $\ADsetsmall(n)$ implies that
$$
\ADsetsmall(n) \xrightarrow{\J_n} \ADsetlarge(n)
\quad\text{is injective,}
\quad\text{and}
\quad
\ADsetlarge(n) \xrightarrow{\J_n} \ADsetsmall(n)
\quad\text{is surjective.}
$$
We conclude that $\J_n$ induces bijections 
$\ADsetsmall(n) \to \ADsetlarge(n)$
and 
$\ADsetlarge(n) \to \ADsetlarge(n)$
as desired.

(b) We first show that 
if $d \leq \sqrt{n}$, then $d \AD n$.
Applying division with remainder to the pair $(d,n)$, we have
$$
n = dk + r
$$
for positive integers $k,r$ satisfying $0 \leq r < d$
and $k = \floorfrac{n}{d}$.
By the strong remainder property, Theorem~\ref{thm:equiv-properties} (4), 
it suffices that  
$r < \min(d,k)$
to conclude $d \AD n$.
But since  $d\leq \sqrt{n}$,
\[
k = \floorfrac{n}{d} \geq \floor{\sqrt{n}} \geq d.
\]
Thus the bound $r < d$
also implies $r < k$.
This verifies that $d \AD n$ as claimed,
so 
\[
\ADsetsmall(n) =\{ d : d \leq \sqrt{n} \}
=  \{1,2,\ldots, \floor{\sqrt{n}} \}.
\]
In (a), we showed that $\J_n : \ADsetsmall(n)\to \ADsetlarge(n)$ defines a bijection. It follows that
\[
\ADsetlarge(n) = \{\floorfrac{n}{k} : k = 1,2,\ldots,\floor{\sqrt{n}}\}.
\]

It remains to check when the  two sets  $\ADsetsmall(n)$ and $\ADsetlarge(n)$ overlap.
Let 
\[
s = \floor{\sqrt{n}} = \max\{d : d \in \ADsetsmall(n) \}
\qquad\text{and}\qquad 
s^* = \floorfrac{n}{s} = \min\{ d : d \in \ADsetlarge(n) \} .
\]
It is straightforward to verify that 
$s\leq s^*$. 
If $s < s^*$, then $\ADsetsmall(n)$ and $\ADsetlarge(n)$ are disjoint,
which occurs exactly when $n \geq s(s+1)$.
On the other hand if $s = s^*$, then their intersection is exactly $\{s\}$,
which occurs when $n < s(s+1)$.
\end{proof}


\begin{cor}[Size of  floor quotient initial intervals]
\label{cor:interval-size} 
Let $n$ be a positive integer.
The initial floor quotient interval $\ADset[1,n]$ 
has size bounded by
\begin{equation*}
\label{eqn:initial-bound} 
2\sqrt{n} - 2 < | \ADset[1,n] | < 2\sqrt{n} .
\end{equation*} 
\end{cor}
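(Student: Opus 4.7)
The plan is to deduce the corollary directly from Proposition~\ref{prop:interval-complement}, which pins down $|\ADset[1,n]|$ exactly in terms of $s = \floor{\sqrt{n}}$. Part (b) of that proposition gives $|\ADsetsmall(n)| = s$ and $|\ADsetlarge(n)| = s$, while part (a) and the intersection formula in (b) tell us that these two sets overlap in a single element precisely when $s^2 \le n < s(s+1)$, and are disjoint when $s(s+1) \le n < (s+1)^2$. So by inclusion-exclusion,
\begin{equation*}
|\ADset[1,n]| = \begin{cases} 2s - 1 & \text{if } s^2 \le n < s(s+1), \\ 2s & \text{if } s(s+1) \le n < (s+1)^2. \end{cases}
\end{equation*}

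For the upper bound, I would argue as follows. If $n = s^2$ is a perfect square then $|\ADset[1,n]| = 2s - 1 = 2\sqrt{n} - 1 < 2\sqrt{n}$. Otherwise $\sqrt{n} > s$ strictly, and then $|\ADset[1,n]| \le 2s < 2\sqrt{n}$. Either way the strict upper bound $|\ADset[1,n]| < 2\sqrt{n}$ holds.

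For the lower bound, I would split on the two cases above. In the first case, $n < s(s+1) < (s+\tfrac{1}{2})^2$ (using $(s+\tfrac{1}{2})^2 = s^2 + s + \tfrac{1}{4}$), so $\sqrt{n} < s + \tfrac{1}{2}$, which rearranges to $2s - 1 > 2\sqrt{n} - 2$. In the second case, $n < (s+1)^2$ gives $\sqrt{n} < s+1$, hence $2s > 2\sqrt{n} - 2$. In both cases we obtain $|\ADset[1,n]| > 2\sqrt{n} - 2$.

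The only mild subtlety is making sure the $(s+\tfrac{1}{2})^2$ comparison in the first case is sharp enough to yield the constant $-2$ on the right-hand side; no serious obstacle is expected, since everything reduces to elementary manipulations of the interval $s \le \sqrt{n} < s+1$ together with the case analysis supplied by Proposition~\ref{prop:interval-complement}.
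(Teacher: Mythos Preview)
Your proposal is correct and follows essentially the same approach as the paper: both derive the exact formula $|\ADset[1,n]| = 2s-1$ or $2s$ from Proposition~\ref{prop:interval-complement} and then verify the inequalities by elementary manipulations. Your use of $n < s(s+1) < (s+\tfrac12)^2$ in the first case is equivalent to the paper's inequality $2\sqrt{x(x+1)} < 2x+1$, and you are slightly more explicit than the paper about the upper bound and the second lower-bound case.
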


\begin{proof}
This bound follows from Proposition \ref{prop:interval-complement}, which implies 
\[
| \ADset[1,n] | = \begin{cases}
2s - 1  & \text{if } s^2 \leq n < s(s+1)\\
2s &\text{if } s(s+1) \leq n < (s+1)^2,
\end{cases}
\]
where $s = \floor{\sqrt{n}}$.
In particular, the bound $2\sqrt{n}-2 < 2s-1$ for $n < s(s+1)$ follows from the inequality
\[
2\sqrt{x(x+1)} < 2x + 1
\qquad\text{for all }x > 0. \qedhere
\]
\end{proof}

%
%
\subsection{Spacing of floor quotients in intervals: gaps and multiplicities}
\label{subsec:gaps-cutting-mults} 

For fixed $n$, we introduce  two more  invariants to floor quotients
$\ADset[1,n]$, called {\em gaps} and {\em (cutting) multiplicities}.

\begin{defi}
\label{def:gap-mult} 
Let $n$ be fixed. 
For each $d \AD n$, the   {\em gap} $G(d, n)$ of  $d$ is 
$$ 
G(d, n) \coloneqq d- d^{-},
$$
where $d^{-}$ is the largest floor quotient of $n$ strictly smaller than $d$
in the additive ordering, using the convention that $1^{-} \coloneqq 0$.
\end{defi} 
\begin{defi}\label{def:gap-mult2}
 For each $d \AD n$ the  {\em multiplicity}
 ({\em cutting multiplicity})
  $K(d, n)$ of $d$ is 
$$
K(d, n) \coloneqq | \cutset{d}{n}|,\
$$ 
where $\cutset{d}{n}\coloneqq \{ k\in \NNplus :   d= \floor{ \frac{n}{k}} \}$
 is the {\em cutting length  set}
of all $k$ that  certify  $d$ is a floor quotient of $n$. 
\end{defi} 
\begin{figure}[h]
\centering
\begin{tikzpicture}[scale=0.6]
\draw (0,0) -- (11,0);
\draw (0,0) -- (0,11);

\draw[smooth,samples=100,domain=0.9:11] plot (\x,{(10/\x)});

\foreach \i in {1,...,11}{
	\foreach \j in {1,...,11}{
		\ifnum \numexpr \i*\j > 10 
			{\draw (\i,\j) circle (0.1);}
		\else
			{\draw[fill] (\i,\j) circle (0.1);}
		\fi
	};
};
\draw (1,10) circle (0.2);
\draw (2,5) circle (0.2);
\draw (3,3) circle (0.2);
\draw (5,2) circle (0.2);
\draw (10,1) circle (0.2);
\end{tikzpicture}
\caption{Integer points under the hyperbola $xy = 10$.
Each floor quotient $d \AD 10$ corresponds to a corner point $(\floor{{10}/{d}}, d)$.}
\label{fig:integer-hyperbola}
\end{figure}

The next result  shows that gaps and multiplicities interchange under the involution $\J_n$
 restricted to  the domain  $\ADset [1, n]$.

%
\begin{lem}[Gaps and multiplicities interchange under $\J_n$]
\label{lem:gap-multiplicity} 
Given $n \ge 1$, set $s= \floor{ \sqrt{n}}$. 
\begin{enumerate}[(a)]
\item The map $\J_n$ interchanges gaps and cutting multiplicities.
That is, for  each $d \AD n$, 
one has  
$G( \J_n(d), n) = K(d, n) $ and $K(\J_n(d), n) = G(d,n)$. 

\item If $d$  is in $\ADsetsmall(n)$, i.e.  $1\le d \le s$,
then $d$ has gap 
$G(d, n) = 1$
and  cutting multiplicity 
$K(d,n) =  \floor{ \frac{n}{d} }- \floor{ \frac{n}{d+1} }$.

\item
If $d$ is in $\ADsetlarge(n)$,
i.e.   $d  = \floor{ \frac{n}{k} }$ 
 for $1 \le k \le s$,
 then $d$ has gap 
$G(d, n) =  \floorfrac{n}{k} - \floorfrac{n}{k+1}$
and  multiplicity $K(d, n)= 1$.
\end{enumerate}
\end{lem}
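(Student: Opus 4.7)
\emph{Plan.} The three parts are interlocked, so I would prove them in the order (b), (a), (c), since (c) is formal once (a) and (b) are in hand.

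\emph{Part (b).} By Proposition~\ref{prop:interval-complement}(b), the set $\ADsetsmall(n) = \{1,2,\ldots,s\}$ consists of consecutive integers, so for any $d$ with $1 \le d \le s$ every integer from $1$ to $d$ is a floor quotient of $n$. In particular the additive predecessor in $\ADset[1,n]$ is $d^- = d-1$ (or $0$ when $d=1$), yielding $G(d,n) = 1$. The multiplicity formula $K(d,n) = \floorfrac{n}{d} - \floorfrac{n}{d+1}$ is then immediate from Lemma~\ref{lem:cutting-size}(b).

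\emph{Part (a).} The heart of the argument is to identify the immediate additive predecessor in $\ADset[1,n]$ of the floor quotient $\J_n(d) = \floorfrac{n}{d}$, for any $d \in \ADset[1,n]$. By Lemma~\ref{lem:cutting-size}(a), the cutting length set $\cutset{d}{n} = \{j : \floorfrac{n}{d+1} < j \le \floorfrac{n}{d}\}$ is an integer interval of length $K(d,n)$ whose largest element is $\J_n(d)$. I claim that the next floor quotient of $n$ strictly below $\J_n(d)$ in the additive order is exactly $\floorfrac{n}{d+1}$, with the convention $0$ in the boundary case $d = n$. Indeed, any floor quotient $m = \floorfrac{n}{j}$ with $m < \floorfrac{n}{d}$ forces $j > d$ by monotonicity of $j \mapsto \floorfrac{n}{j}$, hence $j \ge d+1$, hence $m \le \floorfrac{n}{d+1}$; and since $\floorfrac{n}{d+1}$ is itself a floor quotient of $n$, it realizes the predecessor. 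Consequently
\[
G(\J_n(d), n) = \J_n(d) - \floorfrac{n}{d+1} = \floorfrac{n}{d} - \floorfrac{n}{d+1} = K(d,n).
\]
Substituting $\J_n(d)$ for $d$ and invoking the involution identity $\J_n^{\circ 2} = \mathrm{id}$ on $\ADset[1,n]$ from Lemma~\ref{lem:reciprocal-involution} gives the companion identity $K(\J_n(d), n) = G(d,n)$.

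\emph{Part (c).} For $d = \floorfrac{n}{k}$ with $1 \le k \le s$, the reciprocal-duality property gives $\J_n(d) = k$, and $k \in \ADsetsmall(n)$ by Proposition~\ref{prop:interval-complement}(b). Applying (b) to $k$ yields $G(k,n) = 1$ and $K(k,n) = \floorfrac{n}{k} - \floorfrac{n}{k+1}$, and the interchange from (a) then translates these into $K(d,n) = G(k,n) = 1$ and $G(d,n) = K(k,n) = \floorfrac{n}{k} - \floorfrac{n}{k+1}$.

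\emph{Main obstacle.} The only non-routine ingredient is the claim in (a) that no floor quotient lies strictly between $\floorfrac{n}{d+1}$ and $\floorfrac{n}{d}$. The monotonicity argument resolves this cleanly, but some care will be needed at the boundary cases, namely $d = n$ (where $\J_n(d) = 1$ is the additive minimum of $\ADset[1,n]$, requiring the convention $1^- = 0$) and the overlap point $s \in \ADsetsmall(n) \cap \ADsetlarge(n)$ when $s^2 \le n < s(s+1)$, to verify the interchange formulas remain consistent there.
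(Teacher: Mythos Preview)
Your proposal is correct and follows essentially the same route as the paper. The only cosmetic difference is that the paper proves (a) first by establishing $G(d,n) = K(\J_n(d),n)$ via the predecessor formula $d^- = \floorfrac{n}{\J_n(d)+1}$, whereas you establish the symmetric identity $G(\J_n(d),n) = K(d,n)$ via $\J_n(d)^- = \floorfrac{n}{d+1}$; both reduce to the same monotonicity/tipping-point argument and then invoke the involution $\J_n^{\circ 2} = \mathrm{id}$ to get the companion identity.
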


\begin{rmk}\label{rmk:316} 
The assertions of Lemma \ref{lem:gap-multiplicity} are illustrated in Figure~\ref{fig:integer-hyperbola}.
In this figure  $\ADset[1,10] = \{ 1, 2, 3, 5, 10\}$ and $s= \floor{\sqrt{10}} = 3$.
We make the following observations about Figure \ref{fig:integer-hyperbola}. 
\begin{enumerate}[(i)]
\item
Each floor quotient $d$ of $10$ corresponds to a corner point 
$(\floorfrac{10}{d}, d)$
of the set of integer points under the hyperbola $xy = 10$.

\item
The gap  $G(d,10)$ at $d$ is equal to the vertical distance from $(\floorfrac{10}{d},d)$ to the next corner in the South-East direction.

\item
The multiplicity $K(d,10)$ at $d$ is equal to the horizontal distance from $(\floorfrac{10}{d},d)$ to the next corner in the North-West  direction.

\item 
The figure is symmetric under interchanging the $x$- and $y$-axes.
\end{enumerate}
For these observations to hold for $d = 1$ and $ n$, we consider the hyperbola figure as having two additional points $(\infty, 0)$ and $(0, \infty)$ on either side of the boundary points $(n, 1)$ and $(1,n)$.
\end{rmk}

\begin{proof}[Proof of Lemma \ref{lem:gap-multiplicity}] 
(a)
Let $d \in \ADset [1,n].$
From Lemma~\ref{lem:cutting-size}, we have $K(d,n) = \floor{\frac{n}{d}} - \floor{\frac{n}{d+1}}$.
If we verify that 
$G(d,n) = K(\J_n(d), n)$ for all $d\AD n$,
it would follow that
$G(\J_n(d),n) = K(\J_n^{\circ 2}(d), n) = K(d,n)$
by the involution-duality property of floor quotients of $n$.
Thus it suffices to show that for each $d\AD n$
 the gap $G(d,n) = d - d^-$ satisfies
\begin{equation}\label{eq:38}
G(d,n) = \floor{\frac{n}{\J_n(d)}} - \floor{\frac{n}{\J_n(d)+1}} .
\end{equation}

The assertion
$d = \floor{\frac{n}{\J_n(d)}} $ 
is the involution-duality property, given as Theorem~\ref{thm:equiv-properties} (6).
Recall that $d^-$ denotes the largest floor quotient of $n$ strictly smaller than $d$.
To be a floor quotient of $n$, we need $d^- = \floor{\frac{n}{k}}$ for some $k$, 
and to have $d^- = \floor{\frac{n}{k}} < d$ we need $k >  \J_n(d)$. 
We claim that 
\begin{equation}\label{eq:d-minus} 
d^- =  \floorfrac{n}{\J_n(d)+1}
= \floor{ \frac{n}{\floor{n/d} + 1}} 
\qquad \text{for any } d \in \ADset [1,n].
\end{equation} 
To verify the formula \eqref{eq:d-minus} it  suffices to check that the strict inequality $\floor{\frac{n}{\J_n(d)+1}} < d $ holds.
But the inequality
$$
\floor{\frac{n}{\J_n(d)+1}} < \floor{\frac{n}{\J_n(d)}} = d,
$$
is  exactly the tipping-point property, Theorem~\ref{thm:equiv-properties} (5), applied to the floor quotient $\J_n(d)$.
This verifies the claim \eqref{eq:d-minus}, so \eqref{eq:38} follows.

(b) 
Suppose $d \in \ADsetsmall(n)$.
By Proposition~\ref{prop:interval-complement} $\ADsetsmall(n) = \{1, 2, \ldots, s \}$,
so $d^- = d-1$,
and the gap at $(d,n)$ is $G(d,n) = d - (d-1) = 1$.

(c) 
Suppose $d \in \ADsetlarge(n)$.
By Proposition~\ref{prop:interval-complement}, we have $d = \J_n(k)$ for some $k \in \ADsetsmall(n)$.
Then it follows from (a) and (b) that
\[
K(d,n) = K(\J_n(k),n)
=  G(k, n) = 1
\]
and
\[
G(d,n) = G(\J_n(k),n) 
= K(k,n) = \floorfrac{n}{k} - \floorfrac{n}{k+1},
\]
as asserted.
\end{proof}

%
%
\subsection{Floor quotient poset restricted to $\ADsetlarge(n)$ and $\ADsetsmall(n)$}
\label{subsec:poset-interval-large-small} 

We  consider the  floor quotient order relation $\AD$ 
restricted to the sets  $\ADsetlarge(n)$ and $\ADsetsmall(n)$. 
In general neither $\ADsetlarge(n)$ nor $\ADsetsmall(n)$ is an interval of the floor quotient partial order.
The  set  $\ADsetsmall(n)$
has a minimal element $1$ and generally several maximal elements, while the set $\ADsetlarge(n)$ has a maximal element $n$
but generally has several minimal elements. 

The set $\ADsetsmall(n)$ of ``small floor quotients'' coincides with the additive interval 
$\sA[1, \floor{\sqrt{n}}]$. 
The  floor quotient order relation on $\ADsetsmall(n)$
is  just the restriction of the floor quotient order
to the {\em additive} initial  interval  $\sA[1, \floor{\sqrt{n}}]$.

The main result of this subsection shows that when restricted to  the ``large floor quotients'' $\ADsetlarge(n)$, 
the floor quotient order is order-isomorphic to  
the dual of the divisor partial order restricted to the additive  interval $\sA[1, \floor{\sqrt{n}} ]$.
Recall that the {\em dual} $P^{\ast}$ of a partially ordered set $P$ is the same underlying set with the order relation reversed.
We let $(\ADsetlarge(n))^{\ast}$ denote the dual partially ordered set for $\ADsetlarge(n)$; 
it has a minimal element $n$ and in general many  maximal elements.

\begin{thm}
\label{thm:upper-interval-struct}  
The floor quotient partial order  restricted to  the domain
$
\ADsetlarge(n) = \{ \floor{ n/k } \,: \, 1 \le k \le \floor{ \sqrt{n}} \}
$
is anti-isomorphic to the divisor partial order 
restricted to the additive interval $\sA[1, \floor{ \sqrt{n} }] = \{1,2,\ldots, \floor{\sqrt{n}}\}$,
via the map $\floorfrac{n}{k} \mapsto k$.
That is: for $1 \leq i,j \leq \floor{\sqrt{n}}$ we have
\begin{equation}\label{eq:dual-divisor}
\Big\lfloor{\frac{n}{j}}\Big\rfloor \AD \floor{\frac{n}{i}}
\qquad\text{if and only if}\qquad
i \divides j.
\end{equation}
\end{thm}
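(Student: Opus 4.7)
My plan is to establish the biconditional by treating the two implications separately. The forward direction will be a clean application of Cardinal's commutativity identity \eqref{eq:floor-dilation-commute}, while the reverse direction requires identifying the correct structural lemma about cutting length sets.

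For the easy direction ($i \divides j \Rightarrow \floor{n/j} \AD \floor{n/i}$), I would write $j = im$ and apply the Cardinal identity to obtain
\[
\floor{\frac{n}{j}} = \floor{\frac{n}{im}} = \floor{\frac{1}{m}\floor{\frac{n}{i}}} = \floor{\frac{\floor{n/i}}{m}},
\]
which exhibits $\floor{n/j}$ as a floor quotient of $\floor{n/i}$ via cutting length $m$. This direction does not even require the restriction $i, j \leq \floor{\sqrt{n}}$.

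For the converse, assuming $\floor{n/j} \AD \floor{n/i}$, the definition of $\AD$ produces a positive integer $m$ with $\floor{n/j} = \floor{\floor{n/i}/m}$, and the Cardinal identity again collapses the right side to $\floor{n/(im)}$. Setting $d = \floor{n/j}$, both $j$ and $im$ lie in the cutting length set $\cutset{d}{n}$ of Definition~\ref{def:cut-length}. To conclude $i \divides j$ it suffices to force $im = j$, which in turn reduces to showing that $\cutset{d}{n}$ is a singleton.

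The key structural input will be the combination of Proposition~\ref{prop:interval-complement}(a) with Lemma~\ref{lem:gap-multiplicity}(c). The hypothesis $j \leq \floor{\sqrt{n}}$ places $j$ in $\ADsetsmall(n)$, so the bijection $\J_n: \ADsetsmall(n) \to \ADsetlarge(n)$ sends $j$ to $d = \J_n(j) \in \ADsetlarge(n)$. Lemma~\ref{lem:gap-multiplicity}(c) then gives cutting multiplicity $K(d,n) = 1$, i.e.\ $|\cutset{d}{n}| = 1$, so $j$ and $im$ must coincide, yielding $i \divides j$. The main obstacle to anticipate is that the weaker inequality $im \leq j$, which would follow directly from Lemma~\ref{lem:floor-reciprocal-cutting} (namely $j$ being the largest element of $\cutset{d}{n}$), does not by itself suffice to conclude divisibility; one must instead invoke the sharper singleton property of cutting length sets for floor quotients $d \in \ADsetlarge(n)$.
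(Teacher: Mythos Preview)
Your proposal is correct and follows essentially the same route as the paper: both directions hinge on the Cardinal identity \eqref{eq:floor-dilation-commute}, and the nontrivial implication is closed by observing that $j\le\floor{\sqrt{n}}$ forces $|\cutset{\floor{n/j}}{n}|=1$ via Lemma~\ref{lem:gap-multiplicity}(c), so that $im=j$. The only cosmetic difference is that the paper phrases the key step as ``$ik\in\cutset{\floor{n/j}}{n}=\{j\}$'' without separately invoking Proposition~\ref{prop:interval-complement}(a), but the logical content is identical.
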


\begin{proof}
 Recall that the cutting length set is defined as
$\cutset{d}{n} = \{ k \in \NNplus : \floor{n/k} = d\}$.
Observe that
\begin{align*}
\cutset{\floor{\frac{n}{j}}} {\floor{\frac{n}{i}}} 
= \{k \in \NNplus :  \floor{ \frac1k \floorfrac{n}{i} } = \floorfrac{n}{j} \}.
\end{align*}
The identity \eqref{eq:floor-dilation-commute} states that
$
\floor{ \frac{1}{k} \floorfrac{n}{i} } = \floorfrac{n}{i k} ,
$
for all positive integers $i, k, n$,  
cf,  \cite[Lemma 6]{Cardinal:10} or \cite[Theorem 1.1]{LMR:16}.
It yields  
\begin{equation*}
k \in \cutset{\floor{\frac{n}{j}}} {\floor{\frac{n}{i}}}
\qquad\Leftrightarrow\qquad
\floorfrac{n}{j}
= \floor{ \frac1k \floorfrac{n}{i} } = \floorfrac{n}{ik} 
\qquad\Leftrightarrow\qquad
ik \in \cutset{\floor{\frac{n}{j}}} {n}.
\end{equation*}

Now suppose that
$
\floor{n/j} \AD \floor{n/i}
$
where $i,j \leq \sqrt{n}$.
Since $j \leq \sqrt{n}$, the cutting length set $\cutset{\floor{n/j}}{n}$
has cardinality $1$ by Lemma~\ref{lem:gap-multiplicity} (3),
so it contains only $j$.
By hypothesis, $\cutset{\floor{n/j}}{ \floor{n/i}}$
is nonempty so choose any $k \in \cutset{\floor{n/j}}{ \floor{n/i}}$.
Hence we have
$$
ik \in \cutset{\floorfrac{n}{j}}{n}  = \{j\}.
$$
It follows that $j = ik$, so $i \divides j$.

Conversely, suppose $i \divides j$.
Then $j = ik$ for some integer $k$,
and it follows that
$$
\floorfrac{n}{j} = \floorfrac{n}{ik}
= \floor{ \frac{1}{k} \floorfrac{n}{i}}
\qquad\Rightarrow\qquad
\floorfrac{n}{j}  \AD \floorfrac{n}{i} .
$$
(For this direction of implication, we do not need $i,j \leq \sqrt{n}$.)
\end{proof}

We can now  determine the action of the $n$-floor reciprocal map on the floor quotient order.

\begin{cor}[Action of $\J_n$ on floor quotient order]
\label{cor:410} 
Let $n$ be a positive integer. Then
\begin{enumerate}[(a)]
\item
 If $d \AD e$ in $\ADsetlarge(n)$, then $\J_n(e) \AD \J_n(d)$ in $\ADsetsmall(n)$,
 so $\J_n: \ADsetlarge(n) \to \ADsetsmall(n)$ is order-reversing. 

\item
If $d \AD e$ in $\ADsetsmall(n)$ and $d \neq e$, then 
$\J_n(d) \notAD \J_n(e)$ in $\ADsetlarge(n)$,
so $\J_n: \ADsetsmall(n) \to \ADsetlarge(n)$
is never-order-preserving.
\end{enumerate}
\end{cor}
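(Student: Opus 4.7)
The plan is to reduce both claims to Theorem~\ref{thm:upper-interval-struct}, which identifies the floor quotient order on $\ADsetlarge(n)$ with the reverse of the divisor order on $\{1,2,\ldots,\floor{\sqrt{n}}\}$ via the bijection $k \mapsto \floor{n/k}$. Combined with the involution property of $\J_n$ on $\ADset[1,n]$ (Lemma~\ref{lem:reciprocal-involution}) and the fact from Proposition~\ref{prop:interval-complement} that $\J_n$ restricts to a bijection between $\ADsetsmall(n) = \{1,\ldots,s\}$ and $\ADsetlarge(n) = \{\floor{n/1},\ldots,\floor{n/s}\}$, where $s = \floor{\sqrt{n}}$, each assertion about the floor quotient order on images becomes a statement about divisibility among integers bounded by $\sqrt{n}$.

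For part (a), given $d \AD e$ in $\ADsetlarge(n)$, I would write $d = \floor{n/j}$ and $e = \floor{n/i}$ with $i,j \in \{1,\ldots,s\}$. By the involution property, $\J_n(d) = j$ and $\J_n(e) = i$. Theorem~\ref{thm:upper-interval-struct} then gives $\floor{n/j} \AD \floor{n/i}$ if and only if $i \divides j$. Since divisibility is refined by the floor quotient relation, $i \divides j$ implies $i \AD j$, that is, $\J_n(e) \AD \J_n(d)$, as required.

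For part (b), take $d \AD e$ in $\ADsetsmall(n) = \{1,\ldots,s\}$ with $d \neq e$. Because $\AD$ is refined by the additive order, we must have $d < e$. Both $d$ and $e$ lie in $\{1,\ldots,\floor{\sqrt{n}}\}$, so Theorem~\ref{thm:upper-interval-struct} applies to their images $\J_n(d) = \floor{n/d}$ and $\J_n(e) = \floor{n/e}$ in $\ADsetlarge(n)$, giving $\J_n(d) \AD \J_n(e)$ if and only if $e \divides d$. Since $0 < d < e$, divisibility $e \divides d$ is impossible, so $\J_n(d) \notAD \J_n(e)$, which is the never-order-preserving condition.

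There is no genuine obstacle: the argument is essentially a direct translation through Theorem~\ref{thm:upper-interval-struct}. The only bookkeeping required is to verify that the relevant indices $i,j$ (resp.\ $d,e$) lie in the range $\{1,\ldots,\floor{\sqrt{n}}\}$ so that the theorem applies, which is automatic from the characterizations of $\ADsetsmall(n)$ and $\ADsetlarge(n)$ in Proposition~\ref{prop:interval-complement}, and to handle the possible common element $s$ when $s^2 \le n < s(s+1)$, which is harmless since it only gives a fixed point of $\J_n$ and is excluded in (b) by the hypothesis $d \neq e$.
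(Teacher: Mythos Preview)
Your proof is correct and, for part (a), essentially identical to the paper's argument. For part (b) there is a small but worth-noting difference: the paper handles (b) more elementarily by observing that $\J_n$ is order-reversing for the \emph{additive} order on $\ADset[1,n]$ (and bijective there, being an involution), so $d \AD e$ with $d \ne e$ forces $d < e$, hence $\J_n(d) > \J_n(e)$, hence $\J_n(d) \notAD \J_n(e)$. You instead invoke Theorem~\ref{thm:upper-interval-struct} a second time to translate $\J_n(d) \AD \J_n(e)$ into the impossible divisibility $e \divides d$. Your route is more uniform with part (a); the paper's route is slightly more economical and does not need the full force of Theorem~\ref{thm:upper-interval-struct} for (b). Both are perfectly valid.
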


\begin{proof}
The map $\J_n: \ADset[1,n] \to \ADset[1,n]$ is strictly order-reversing for the additive total order,
i.e. $d \leq e$ implies $\J_n(d) \geq \J_n(e)$.
Since the additive order $\leq$ is a refinement of $\AD$,  if $d \AD e$ with $d \ne e$  then $\J_n(d)  \notAD \J_n(e)$ so the
action of $\J_n$ is never-order-preserving in both cases (a) and (b). 
Thus (b) is proved.

In case (a) given $d \AD e \in \ADsetlarge(n)$,
it remains to show that  $\J_n(e) \AD \J_n(d)$ in $\ADsetsmall(n)$.
Proposition~\ref{prop:interval-complement} implies that $d = \floor{n/j} = \J_n(j)$ and $e = \floor{n/k} = \J_n(k)$ for integers $j,k$ satisfying
$1 \le k \le j \le \floor{\sqrt{n}}$.
By Theorem \ref{thm:upper-interval-struct},
$\floor{n/j} \AD \floor{n/k}$  implies that $k \divides j$.
Since $\AD$ refines the divisor order
by Theorem \ref{thm:approx-order2}, we have $k \AD j$.
Finally, we observe that 
$\J_n(d) = \J_n^{\circ 2}(j) = j$ and
$\J_n(e) = \J_n^{\circ 2}(k) = k$ by Lemma~\ref{lem:reciprocal-involution} so
$\J_n(e) \AD \J_n (d)$, which proves (a).
\end{proof}

%
\begin{exa} 
\label{ex:411bb} 
In Figure \ref{fig:hasse-168} we  illustrate  the order structure of $\ADset[1, 168] = \ADsetsmall(168) \bigcup \ADsetlarge(168)$.

\begin{figure}[h]
\centering
\begin{tikzpicture}[scale=0.8]
	\node (168) at (0,12) {$168$};
	\node (84) at (-2,11) {$84$};
	\node (56) at (1,11) {$56$};
	\node (42) at (-4,10) {$42$};
	\node (33) at (3,10) {$33$};
	\node (28) at (0,9) {$28$};
	\node (24) at (5,9) {$24$};
	\node (21) at (-6,8) {$21$};
	\node (18) at (-3.5,8) {$18$};
	\node (16) at (2,8) {$16$};
	\node (15) at (7,8) {$15$};
	\node (14) at (-2,7) {$14$};
	\node (12) at (-2,5) {$12$};
	\node (11) at (7,4) {$11$};
	\node (10) at (2,4) {$10$};
	\node (9) at (-3.5,4) {$9$};
	\node (8) at (-6,4) {$8$};
	\node (7) at (5,3) {$7$};
	\node (6) at (0,3) {$6$};
	\node (5) at (3,2) {$5$};
	\node (4) at (-4,2) {$4$};
	\node (3) at (1,1) {$3$};
	\node (2) at (-2,1) {$2$};
	\node (1) at (0,0) {$1$};
	
	\foreach \a/\b in {
		56/11, 
		42/8, 
		28/9,
		21/10, 
		18/9, 18/6, 
		15/5} {
		\draw[color=lightgray] (\a) to (\b);
	}
	\draw[color=lightgray, out=-110, in=110] (84) to (12);
	\draw[color=lightgray, out=-140, in=60] (56) to (8);
	\draw[color=lightgray, out=-30, in=100] (42) to (6);
	\draw[color=lightgray, out=-50, in=110] (33) to (11);
	\draw[color=lightgray, out=-90, in=50] (33) to (6);
	\draw[color=lightgray, out=-90, in=140] (28) to (5);
	\draw[color=lightgray, out=-130, in=20] (24) to (12);
	\draw[color=lightgray, out=-180, in=50] (24) to (8);
	\draw[color=lightgray, out=-50, in=180] (21) to (7);
	\draw[color=lightgray, out=-80, in=120] (21) to (4);
	\draw[color=lightgray, out=-150, in=30] (16) to (8);
	\draw[color=lightgray, out=-70, in=90] (16) to (5);
	\draw[color=lightgray, out=-110, in=90] (16) to (3);
	\draw[color=lightgray, out=-90, in=50] (15) to (7);
	\draw[color=lightgray, out=-20, in=140] (14) to (7);
	\draw[color=lightgray, out=-140, in=100] (14) to (4);

	\draw (168) -- (84) -- (42) -- (21);
	\draw (168) -- (56) -- (28) -- (14);
	\draw[out=-20, in=120] (168) to (33);
	\draw[out=-15, in=120] (168) to (24);
	\draw[out=-10, in=120] (168) to (15);
	\draw (84) -- (28);
	\draw[out=-30, in=130] (84) to (16);
	\draw (56) to (18);
	\draw (42) -- (14);
	\draw (33) -- (16);
	
	\draw (1) -- (2) -- (4) -- (8);
	\draw (1) -- (3) -- (6) -- (12);
	\draw (2) -- (5) -- (10);
	\draw (2) -- (6) -- (12);
	\draw[out=30, in=-170] (2) to (7);
	\draw[out=20, in=-130] (3) to (7);
	\draw (3) to (9);
	\draw (3) to (10);
	\draw[out=10, in=-120] (3) to (11);
	\draw (4) to (9);
	\draw (4) to (12);
	\draw[out=10, in=-140] (5) to (11);
	
\end{tikzpicture}
\caption{Hasse diagram for the interval $\ADset[1,168]$.}
\label{fig:hasse-168}
\end{figure}
\begin{enumerate}[(i)] 
\item
The elements $\ADsetsmall (168)= \{1, 2, \ldots, 11, 12\}$ comprise the small floor quotients $\ADsetsmall(168)$.
The order relations among these elements shows the floor quotient relation $\AD$ on an additive interval $\{1, \ldots, 12\}$.

\item
The elements $\ADsetlarge(168) = \{14, 15, 16, 18, 21, \ldots, 84, 168\}$ comprise the large floor quotients $\ADsetlarge(168)$.
The order relations among these elements is dual (anti-isomorphic) to the divisibility relation $d \divides e$ on an additive interval $\{1, \ldots, 12\}$.

\item
The covering relations between $\ADsetsmall(168)$ and $\ADsetlarge(168)$ are denoted by lighter gray lines.
Theorem \ref{thm:intro-interval-incidence}(c) asserts that as $n \to \infty$ the incidences in this set comprise $3/4$ of the incidence
relations of the interval $\ADset[1,n]$ in the partial order.
\end{enumerate}
\end{exa} 

%
%
\subsection{Consecutive floor quotient intervals $\ADset[1,n-1]$ and $\ADset[1, n]$}
\label{subsec:consecutive-intervals}

We compare the  structure of consecutive floor quotient intervals $\ADset[1,n-1]$ and $\ADset[1,n]$,
first   as partial orders, then as sets of integers.

The following result treats the change
in partial order structures.

\begin{thm}
\label{thm:consecutive-interval-posets}
Let $n \ge 2$ and set $s= \floor{\sqrt{n}}$. 
\begin{enumerate}[(a)]
\item 
Let 
$\iota^-_n: \ADsetsmall(n-1) \to \ADsetsmall(n)$
be the identity map $\iota^-_n(d) = d$.
If $n \neq s^2$,  then $\iota_n^-$ is an isomorphism of posets,
\[
(\ADsetsmall(n-1), \AD) \xrightarrow{\sim} (\ADsetsmall(n), \AD).
\]
If $n = s^2$, then $\iota_n^-$ is injective and is an isomorphism onto its image, 
and $s \in \ADsetsmall(n)$ is the unique element not in the image of $\iota^-_n$.

\item
Let 
$\iota_n^+: \ADsetlarge(n-1) \to \ADsetlarge(n)$
be the map that sends $\iota^+_n(\floorfrac{n-1}{k}) = \floorfrac{n}{k}$.
If $n \neq s^2$, then 
$\iota_n^+$ is an isomorphism of posets,
\[
(\ADsetlarge(n-1), \AD) \xrightarrow{\sim} (\ADsetlarge(n), \AD).
\]
If $n = s^2$, 
then $\iota_n^+$ is injective and is an isomorphism onto its image,
and $s \in \ADsetlarge(n)$ is the unique element not in the image of $\iota^+_n$.
\end{enumerate}
\end{thm}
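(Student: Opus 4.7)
For part (a), the plan is to use Proposition~\ref{prop:interval-complement}(b) to identify $\ADsetsmall(m) = \{1, 2, \ldots, \floor{\sqrt m}\}$ for any $m \geq 1$. Since the floor quotient relation $\AD$ is a single partial order on $\NNplus$, the identity map $\iota_n^-$ automatically preserves and reflects $\AD$ on any subset, so the only remaining question is surjectivity. An elementary comparison of $\floor{\sqrt{n-1}}$ with $\floor{\sqrt n}$ shows these agree unless $n-1 < s^2 \leq n$, i.e., unless $n = s^2$; in the exceptional case the unique element of $\ADsetsmall(n)$ not in the image of $\iota_n^-$ is $s$.

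For part (b), the plan is to reduce the statement to the anti-isomorphism of Theorem~\ref{thm:upper-interval-struct}. First, one checks that $\iota_n^+$ is well-defined: by Lemma~\ref{lem:gap-multiplicity}(c), every $d \in \ADsetlarge(m)$ has cutting multiplicity $K(d,m) = 1$, so the representation $d = \floorfrac{m}{k}$ with $1 \leq k \leq \floor{\sqrt m}$ determines $k$ uniquely. Applying Theorem~\ref{thm:upper-interval-struct} to both $m = n-1$ and $m = n$, the bijections $k \mapsto \floorfrac{n-1}{k}$ and $k \mapsto \floorfrac{n}{k}$ identify $(\ADsetlarge(n-1), \AD)$ and $(\ADsetlarge(n), \AD)$ with the dual of the divisor poset on $\{1, \ldots, \floor{\sqrt{n-1}}\}$ and $\{1, \ldots, \floor{\sqrt n}\}$ respectively. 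Under this translation $\iota_n^+$ becomes the inclusion of index sets, which is trivially an order embedding for the divisor order; hence $\iota_n^+$ is an order isomorphism onto its image.

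Surjectivity for part (b) then follows from the same case analysis as in part (a): when $n \neq s^2$ the two index sets agree, and when $n = s^2$ the one missing index $k = s$ corresponds to the element $\floorfrac{n}{s} = s \in \ADsetlarge(n)$. The only slightly subtle point to verify with care is well-definedness of $\iota_n^+$, which rests on the uniqueness of the cutting length for elements of $\ADsetlarge(m)$; beyond that, the proof is essentially a bookkeeping translation through the already-established structure theorems together with a case analysis on whether $n$ is a perfect square, so I do not expect a genuine obstacle.
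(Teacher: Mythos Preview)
Your proposal is correct and follows essentially the same route as the paper: part (a) from the explicit description of $\ADsetsmall(m)$ in Proposition~\ref{prop:interval-complement}(b), and part (b) from the anti-isomorphism of Theorem~\ref{thm:upper-interval-struct}. You are more careful than the paper in making explicit the well-definedness of $\iota_n^+$ via the cutting-multiplicity-one fact (Lemma~\ref{lem:gap-multiplicity}(c)) and in spelling out the perfect-square case analysis, but these are elaborations of the same argument rather than a different approach.
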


\begin{proof}
\begin{enumerate}[(a)]
\item[(a)] 
This result follows from the characterization of $\ADsetsmall(n)$ in Proposition~\ref{prop:interval-complement} (b).
\item [(b)]
This result follows directly  from Theorem~\ref{thm:upper-interval-struct}, characterizing the poset structure of $\ADsetlarge(n)$.
\qedhere
\end{enumerate}
\end{proof}

We now  compare the consecutive intervals $\ADset[1,n-1]$ and $\ADset[1,n]$,
viewed  as sets.
The sets  $\ADset[1,{n-1}]$ and $\ADset[1,n]$ change between  $\ADsetlarge(n-1)$ and $\ADsetlarge(n)$,
adding new elements at each of the  divisors $e$ of $n$ with $e > \sqrt{n}$, and 
for such $e$, removing elements $e-1 \in \ADsetlarge(n-1)$.
These changes do not change the size of $\ADset[1,n-1]$ going to $\ADset[1,n]$. The increases by $1$ in size occur for
$n=s^2$ and $n=s(s+1)$, where   a  new element $s$ (resp. $s+1$)  is added, with no corresponding element removed.

These changes, summarized in the following lemma, 
underly  a recursion for computing the differenced floor quotient M\"{o}bius function
 appearing in Theorem \ref{thm:mobius-recursion}.

\begin{lem}
\label{lem:consecutive-interval-sets}
For any $n\geq 2$, the floor quotient interval $\ADset[1,n]$ satisfies
\[ 
\ADset[1,n] = \begin{cases}
	\ADset[1,n-1] \setminus \{d-1 : d \in \sD^+(n)\} \bigcup  \sD^+(n) \\
	\qquad\qquad\qquad\qquad\qquad\qquad  \text{if $n\neq s^2$ or $s(s+1)$ for any }s \geq 1, \\[1em]
	\ADset[1,n-1] \setminus \{d-1 : d \in \sD^+(n)\} \bigcup \sD^+(n) \bigcup \{s\} \\
	\qquad\qquad\qquad\qquad\qquad\qquad  \text{if } n = s^2 \text{ or }s(s+1), \\
\end{cases}
\]
where
$
\sD^+(n) = \{ d : d > \sqrt{n},\, d\divides n\}.
$
\end{lem}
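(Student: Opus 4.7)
The plan is to apply the explicit description from Proposition~\ref{prop:interval-complement}, namely $\ADset[1,m] = \{1,\ldots, \floor{\sqrt{m}}\} \cup \{\floor{m/k} : 1 \le k \le \floor{\sqrt{m}}\}$, to both $m = n$ and $m = n-1$, tracking how each piece changes. Set $s = \floor{\sqrt{n}}$ and $s' = \floor{\sqrt{n-1}}$. Then $s' = s$ unless $n = s^2$, in which case $s' = s-1$, so the small part $\{1,\ldots,s'\}$ either coincides with $\{1,\ldots, s\}$ or acquires the single new element $s$ (when $n$ is a perfect square).

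For the large part, the key tool is the elementary identity
\[
\floor{n/k} - \floor{(n-1)/k} \;=\;
\begin{cases} 1 & \text{if } k \divides n,\\ 0 & \text{otherwise,}\end{cases}
\]
valid for each positive integer $k$. Thus as $k$ ranges over $\{1, \ldots, s'\}$, the values $\floor{n/k}$ and $\floor{(n-1)/k}$ agree except at divisors $k$ of $n$, where $\floor{n/k} = n/k$ and $\floor{(n-1)/k} = n/k - 1$. Under the bijection $k \leftrightarrow d = n/k$, the divisors $k \le s'$ of $n$ correspond exactly to the divisors $d \in \sD^+(n)$: if $n \ne s^2$ this uses $k \le s \Leftrightarrow d \ge n/s > \sqrt{n}$ (since $s < \sqrt{n}$ strictly), while if $n = s^2$ only $k \le s-1$ is allowed, with the extra index $k = s$ contributing the additional value $\floor{n/s} = s$ in $\ADsetlarge(n)$. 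Hence $\ADsetlarge(n)$ gains exactly $\sD^+(n)$ over $\ADsetlarge(n-1)$ (plus the extra $s$ when $n = s^2$), while $\{d-1 : d \in \sD^+(n)\}$ leaves $\ADsetlarge(n-1)$.

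The main obstacle is the bookkeeping of the boundary element $s$ across the four sub-cases of Proposition~\ref{prop:interval-complement}(b). A short check shows that the only multiple of $s+1$ in $[s^2, (s+1)^2)$ is $s(s+1)$ itself, so $s+1 \in \sD^+(n)$ precisely when $n = s(s+1)$; in that case $s = (s+1)-1$ lies in the removal set, but $s$ still belongs to $\ADsetsmall(n) \subseteq \ADset[1,n]$, so the formula's $\{s\}$ term correctly re-inserts it. When $n = s^2$, the element $s$ is genuinely new to $\ADsetsmall$ and is not in the removal set (since $s+1 \nmid s^2$, as $s^2 \equiv 1 \pmod{s+1}$), so $\{s\}$ must again be added explicitly. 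In the remaining two sub-cases $s^2 < n < s(s+1)$ and $s(s+1) < n < (s+1)^2$, neither $s+1 \in \sD^+(n)$ nor does $\ADsetsmall$ change, so no $\{s\}$ adjustment is needed. Combining these observations yields the formula by verifying both set inclusions.
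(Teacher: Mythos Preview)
Your proposal is correct and follows essentially the same approach as the paper: decompose $\ADset[1,m]$ into $\ADsetsmall(m)$ and $\ADsetlarge(m)$ via Proposition~\ref{prop:interval-complement}, use the identity $\floor{n/k}-\floor{(n-1)/k}=[k\mid n]$ to track changes in the large part, and handle the boundary element $s$ separately in the cases $n=s^2$ and $n=s(s+1)$. Your bookkeeping is in fact slightly more explicit than the paper's (e.g., the observation that $s+1\in\sD^+(n)$ iff $n=s(s+1)$, and that $s+1\nmid s^2$), but the argument is the same.
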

\begin{proof}
We use  the decomposition $\ADset[1,n] = \ADsetsmall(n) \bigcup \ADsetlarge(n)$  in Theorem~\ref{prop:interval-complement}.
For any $n\geq 2$, the small floor quotients $\ADsetsmall(n)$ satisfy
\begin{equation}
\ADsetsmall(n) = \begin{cases}
	\ADsetsmall(n-1) &\text{if }n \neq s^2 \text{ for any }s, \\
	\ADsetsmall(n-1) \bigcup \{s\} &\text{if } n = s^2.
\end{cases}
\end{equation}
This follows directly from Proposition~\ref{prop:interval-complement} (b).

 Proposition~\ref{prop:interval-complement} (b) states that the large floor quotients $\ADsetlarge(n)$ satisfy
\[ 
  \ADsetlarge(n) = \{ \floorfrac{n}{k} : 1 \leq k \leq \floor{\sqrt{n}} \}.
\]
We use the fact 
\[
\floorfrac{n}{k} - \floorfrac{n-1}{k} = \begin{cases}
1 &\text{if } k \divides n,\\
0 &\text{otherwise}.
\end{cases}
\]
Therefore
\begin{equation}
\ADsetlarge(n) = \begin{cases}
	\ADsetlarge(n-1) \setminus \{d-1 : d \in \sD^+(n) \} \bigcup \sD^+(n) \\
	\qquad\qquad &\text{if } n\neq s^2 \text{ for any }s \geq 1,\\[1.0em]
	\ADsetlarge(n-1) \setminus \{d-1 : d \in \sD^+(n) \} \bigcup \sD^+(n) \bigcup \{s\} \\
	\qquad\qquad &\text{if } n = s^2.
\end{cases}
\end{equation}
Finally, 
Proposition~\ref{prop:interval-complement} (b)  states that
\[
 \ADsetsmall(n) \bigcap \ADsetlarge(n) = \begin{cases}
 \{ s\} &\text{if } s^2 \leq n < s(s+1) \\
 \emptyset &\text{if } s(s+1) \leq n < (s+1)^2,
 \end{cases}
\]
where $s = \floor{\sqrt{n}}$.
In particular, when $n = s(s+1)$
we have $s+1 \in \sD^+(n)$ and $s \in \{d -1 : d \in \sD^+(n)\}$; in this case we must add back $\{s\}$ to recover $\ADsetsmall(n)$.
\end{proof}

%
%
\begin{exa}
We treat a case where $n=s(s+1)$.
Consider $n = 12 = s(s+1)$ with $s = 3$.
Then 
\[
\ADsetsmall(11) = \{1,2,3\}
\qquad\text{and}\qquad
\ADsetlarge(11) = \{3,5,11\}
\]
while
\[
\ADsetsmall(12) = \{1,2,3\}
\qquad\text{and}\qquad
\ADsetlarge(12) = \{4,6,12\}.
\]
\end{exa}

%
%
\subsection{Counting incidences in floor quotient intervals}
\label{subsec:count-incidences}

The incidence algebra of a locally finite partially ordered set was formalized by Rota in \cite{Rota:64}. 
For a poset $(\sS,\preccurlyeq)$, it is the set of   functions $f: \sS \times \sS \to \QQ$
such that $f(x,y) = 0$ if $x \not\preccurlyeq y$.
It forms a vector space over $\QQ$. 
The algebra operations are pointwise sum and convolution, and
the M\"{o}bius function 
is the convolution inverse of the zeta function.

The {\em  zeta function} $\zeta(\cdot, \cdot)$ of a 
locally finite partially ordered set  $\sP= (\sS, \preccurlyeq) $ on domain $\sS$  is given by
\begin{equation}
\zeta(s, t) = \begin{cases} 
1 & \text{if} \, s \preccurlyeq t\\ 
0 & \text{otherwise}. 
\end{cases} 
\end{equation} 
The zeta function encodes information
 on the  number of incidence relations of $\preccurlyeq$ on subsets of
the domain $\sS$.
For subsets $\sS_1, \sS_2$ of $\sS$   we set
\begin{equation}\label{eq:cumulative12} 
\Z( \sS_1, \sS_2) \coloneqq \sum_{ s \in \sS_1}\sum_{ t \in \sS_2} \zeta(s, t).
\end{equation} 
If $\sS_1= \sS_2$ then we  write 
\begin{equation}\label{eq:cumulative11} 
\Z(\sS_1) \coloneqq \Z( \sS_1, \sS_1) = \sum_{ s, t \in \sS_1} \zeta(s, t).
\end{equation} 

The following result counts (non-strict)  incidences of the floor quotient order on
its initial intervals $\ADset[1,n]$,
as well as on $\ADsetsmall(n)$ and $\ADsetlarge(n)$.
The value $\Z(\ADset[1,n])$ is the vector space dimension of its incidence algebra.

\begin{thm}[Incidence counts in initial intervals]
\label{thm:interval-incidence-bound}
Given $n \ge 2$, let $\ADset[1,n]$ be an initial interval of the floor quotient order.
Then the following hold. 

\begin{enumerate}[(a)]
\item The total 
number of incidences in $\ADset[1,n]$  satisfies 
\begin{equation}\label{eq:total-edges}
\Z(\ADset[1,n]) = \frac{16}{3} n^{3/4} + O \left( {n}^{1/2} \right).
\end{equation} 

\item The sets $\ADsetsmall(n)$ and $\ADsetlarge(n)$ satisfy the bounds
\begin{align}
\Z(\ADsetsmall(n) ) &= \frac{4}{3} n^{3/4}  + O \big(n^{1/2}\big) \label{eq:small-edges} \\
\Z(\ADsetlarge(n) ) &= \frac{1}{2} n^{1/2}\log n  + (2 \gamma -1)n^{1/2} + O \big( n^{1/4} \big). \label{eq:large-edge}
\end{align} 

\item The incidences between $\ADsetsmall(n)$ and $\ADsetlarge(n)$ satisfy the bounds
\begin{equation}
 \label{eq:small-large-edges}
\Z( \ADsetsmall(n), \ADsetlarge(n) )= 4 n^{3/4} +O\left(  n^{1/2} \log n\right)
\end{equation}
and $\Z(\ADsetlarge(n), \ADsetsmall(n)) = 0  \text{ or } 1$.
 
\end{enumerate}
\end{thm}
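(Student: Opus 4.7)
The strategy is to establish part (b) first, then obtain (a) by a parallel direct calculation, and finally extract (c) by inclusion--exclusion exploiting the near-disjointness of $\ADsetsmall(n)$ and $\ADsetlarge(n)$.

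For $\Z(\ADsetsmall(n))$, Proposition~\ref{prop:interval-complement} identifies $\ADsetsmall(n) = \{1, 2, \ldots, s\}$ with $s = \floor{\sqrt{n}}$, and for each $e \le s$ the lower interval $\ADset[1,e]$ is automatically contained in $\{1,\ldots,e\} \subseteq \ADsetsmall(n)$. Hence $\Z(\ADsetsmall(n)) = \sum_{e=1}^{s} |\ADset[1,e]|$, and applying Corollary~\ref{cor:interval-size} (giving $|\ADset[1,e]| = 2\sqrt{e} + O(1)$) together with the standard asymptotic $\sum_{e \le s} \sqrt{e} = \tfrac{2}{3} s^{3/2} + O(\sqrt{s})$ yields \eqref{eq:small-edges}. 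For $\Z(\ADsetlarge(n))$, invoke Theorem~\ref{thm:upper-interval-struct}: the map $k \mapsto \floor{n/k}$ is an anti-isomorphism from $(\{1,\ldots,s\},\divides)$ to $(\ADsetlarge(n),\AD)$, and an anti-isomorphism preserves the total count of comparable pairs, so $\Z(\ADsetlarge(n)) = \sum_{j=1}^{s} \tau(j)$. Dirichlet's divisor formula $\sum_{j \le x} \tau(j) = x\log x + (2\gamma-1)x + O(\sqrt{x})$ applied at $x = s$, combined with the expansion $s\log s = \tfrac12 \sqrt{n}\, \log n + O(\log n)$, gives \eqref{eq:large-edge}.

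For part (a), expand $\Z(\ADset[1,n]) = \sum_{e \AD n} |\ADset[1,e]|$ and split the sum according to the decomposition $\ADset[1,n] = \ADsetsmall(n) \cup \ADsetlarge(n)$, whose overlap has at most one element by Proposition~\ref{prop:interval-complement}. The contribution from $\ADsetsmall(n)$ is $\tfrac{4}{3} n^{3/4} + O(\sqrt{n})$ exactly as above. For the $\ADsetlarge(n)$ contribution, parametrize $e = \floor{n/k}$ for $k = 1,\ldots, s$ (these values are pairwise distinct by the tipping-point property, Theorem~\ref{thm:equiv-properties}(5), since every $k \le s$ is itself a floor quotient of $n$) and combine $\sqrt{\floor{n/k}} = \sqrt{n/k} + O(\sqrt{k/n})$ with $\sum_{k=1}^{s} k^{-1/2} = 2\sqrt{s} + O(1)$ to conclude that $\sum_{k=1}^{s} \sqrt{\floor{n/k}} = 2 n^{3/4} + O(\sqrt{n})$. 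The $\ADsetlarge$ contribution is therefore $4 n^{3/4} + O(\sqrt{n})$, and adding the two parts gives \eqref{eq:total-edges}.

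For part (c), the bound $\Z(\ADsetlarge(n),\ADsetsmall(n)) \in \{0,1\}$ is immediate: since the floor quotient order is refined by the additive order, any pair $(d,e)$ with $d \AD e$, $d \in \ADsetlarge(n)$, and $e \in \ADsetsmall(n)$ must satisfy $d \le e$, while $d \ge \floor{n/s} \ge s \ge e$ forces $d = e = s$, which is possible only when $s \in \ADsetsmall(n) \cap \ADsetlarge(n)$. An inclusion--exclusion argument, using that $|\ADsetsmall(n) \cap \ADsetlarge(n)| \le 1$ and so the overcounting of pairs incident to the single possible common element is bounded by $2\,|\ADset[1,n]| = O(\sqrt{n})$, produces
\begin{equation*}
\Z(\ADset[1,n]) = \Z(\ADsetsmall(n)) + \Z(\ADsetlarge(n)) + \Z(\ADsetsmall(n),\ADsetlarge(n)) + \Z(\ADsetlarge(n),\ADsetsmall(n)) + O(\sqrt{n}).
\end{equation*}
Solving for $\Z(\ADsetsmall(n),\ADsetlarge(n))$ and substituting the estimates from (a) and (b) yields \eqref{eq:small-large-edges}; the $\log n$ factor in the error term arises solely from the absorption of $\Z(\ADsetlarge(n))$. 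The main bookkeeping task will be verifying that the boundary overcounts from the intersection $\{s\}$ stay within the claimed $O(\sqrt{n})$ budget, so that the main terms $\tfrac{16}{3} n^{3/4} - \tfrac{4}{3} n^{3/4} = 4 n^{3/4}$ subtract cleanly.
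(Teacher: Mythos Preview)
Your proof is correct and follows essentially the same route as the paper: both compute $\Z(\ADsetsmall(n))$ by summing $|\ADset[1,e]|\approx 2\sqrt{e}$ over $e\le s$, handle $\Z(\ADsetlarge(n))$ via the anti-isomorphism of Theorem~\ref{thm:upper-interval-struct} and Dirichlet's divisor sum, obtain (a) by splitting $\sum_{e\AD n}|\ADset[1,e]|$ over $\ADsetsmall$ and $\ADsetlarge$, and derive (c) by inclusion--exclusion. Your treatment of the overlap bookkeeping in (c) is in fact slightly more explicit than the paper's.
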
 
\begin{rmk} \label{remk:incidences} 
The bounds in Theorem \ref{thm:interval-incidence-bound}
count  all incidences including the trivial ones $x \AD x$.
 One can alternatively count strict incidences 
using the {\em incidence function} $n(x,y) = \zeta(x,y) - \delta(x,y)$
introduced by Rota \cite[Sec. 3]{Rota:64}, where $\delta(x,y)=1$ if $x=y$ and is $0$ otherwise.
Since $\ADset[1, n]$ has cardinality
$O( \sqrt{n})$,  estimates similar to (a)--(c) hold for counting strict incidences,
with the analogue of \eqref{eq:large-edge} requiring  a correction to the coefficient of the $n^{1/2}$-term.
\end{rmk} 

\begin{proof}[Proof of Theorem \ref{thm:interval-incidence-bound}]
By Corollary  \ref{cor:interval-size}, for each element $d \in \ADset[1,n]$ there are $2 \sqrt{d} + O(1)$ values $d'\in \ADset[1,n]$
having  $d' \AD d$.

Treating $\ADsetsmall(n)$,  since $d \in \ADsetsmall (n)$
are exactly $d \in \{1, \ldots, \floor{\sqrt{n}}\}$, we obtain
\[
	\Z(\ADsetsmall (n)) = \sum_{d=1}^{\floor{\sqrt{n}} } \left( 2 \sqrt{d} + O(1) \right).
\]
Replacing the summand by an integral 
on each unit subinterval $[d-1,d]$ 
introduces an error of $O \left( \frac{1}{\sqrt{d}} \right)$, 
yielding
\begin{align}\label{eq:small-bound} 
	\Z(\ADsetsmall (n)) &=  \int_{0}^{\floor{\sqrt{n}} } 2t^{1/2} \,dt + 
	O \left( \sum_{d=1}^{\floor{\sqrt{n}}}\frac{1}{\sqrt{d}}\right)
	+ O \left( \sqrt{n} \right)  \nonumber \\
	&=  \frac{4}{3}  n^{3/4} + O \left( \sqrt{n} \right),
\end{align}
after evaluating the integral. 
This verifies \eqref{eq:small-edges}.

Treating $\ADset[1,n]$, the elements in $\ADset[1,n] \setminus \ADsetsmall(n)$ are those of the form 
$d= \floor{n/k}$ for $1 \le k \le \sqrt{n}$
or $1 \leq k \leq \sqrt{n} - 1$. 
Each such $d$ has
$2 \sqrt{\floor{n/k}} +O(1)$ values $d' \in \ADset[1,n]$ having $d' \AD \floor{n/k}$.
These account for
all the remaining incidences in $\ADset[1,n]$, and their number $\Z^{\ast}$ satisfies
\begin{align*}
\Z^{\ast}   &= \sum_{k=1}^{\floor{\sqrt{n}}} \left( 2 \sqrt{\floorfrac{n}{k}} + O(1) \right)
=  \left(\sum_{k=1}^{\floor{\sqrt{n}}} 2\sqrt{\frac{n}{k}} \right)   + O(\sqrt{n} ),
\end{align*}
using $0\le \sqrt{x}- \sqrt{\floor{x}} \le 1$ for $x >0$. Approximating the sum by an integral,
we have
\begin{align*} 
 2 \sqrt{n} \left(\sum_{k=1}^{\floor{\sqrt{n}}} \frac{1}{\sqrt{k}} \right) = 2 \sqrt{n} \int_{t=0}^{\floor{\sqrt{n}}} \frac{dt}{\sqrt{t}} + O \left( \sqrt{n} \right),
\end{align*} 
where the  remainder term estimates the difference of   the $k$-th term of the sum with  the integral 
from $t=k-1$ and $t=k$   by $O ( k^{-3/2})$.
Evaluating the integral yields
\[
	\Z^{\ast} = 4 n^{3/4} + O \left(\sqrt{n} \right).
\]
Combining this bound with 
\eqref{eq:small-bound} verifies \eqref{eq:total-edges} for $\Z(\ADset[1,n])$, proving (a).

Treating $\ADsetlarge(n)$, according to  Theorem \ref{thm:upper-interval-struct} 
the order structure 
on $\ADsetlarge(n)$ under the map $\floor{n/k} \to k$ is
anti-isomorphic to the divisor order on the additive interval $k \in \{1, \ldots, \floor{\sqrt{n}}\}$. 
The total number of its incidences therefore matches  total number of incidences of the divisor order on 
the set $\{1, \ldots, \floor{\sqrt{n}}\}$.
Therefore 
\begin{equation} \label{eq:large-bound}
	\Z( \ADsetlarge (n)) = \sum_{k=1}^{\floor{\sqrt{n}}} \sigma_0(k), 
\end{equation} 
where $\sigma_0(k)$ counts the number of divisors of $k$. 
It was shown by
Dirichlet that
$$
\sum_{k=1}^{x} \, \sigma_0(k) = x \log x + (2 \gamma-1) x + O(x^{1/2}),
$$
where $\gamma \approx 0.57721$ is Euler's constant.
Substituting this bound in \eqref{eq:large-bound} 
establishes  the bound \eqref{eq:large-edge} for $\Z(\ADsetlarge(n))$,
completing the proof of (b).  

Treating $\Z( \ADsetsmall (n), \ADsetlarge(n))$, the
 bound in (c) is obtained using the  inclusion-exclusion formula
\[
	\Z( \ADsetsmall (n), \ADsetlarge(n) )= \Z(\ADset[1,n]) - \Z(\ADsetlarge(n)) - \Z(\ADsetsmall(n)) + O\left(n^{1/4}\right),
\]
with the remainder  term 
coming from the possible overlap of $\ADsetlarge(n)$ and $\ADsetsmall(n)$ in
the element $s= \floor{\sqrt{n}}$, which  has  $O( n^{1/4})$ lower incidences. 
The bounds in (a) and (b) yield the first estimate in (c). 
For the final estimate, there is no nonzero
contribution $\zeta(x,y)$ to $\Z( \ADsetlarge(n), \ADsetsmall(n) )$ except $\zeta(s,s)$ if $s$ belongs to
both $\ADsetlarge(n)$ and $\ADsetsmall(n)$.
\end{proof}

%
%
\subsection{Counting chains in intervals}
\label{subsec:count-chains}

We bound  the total  number of chains inside intervals of the floor quotient partial order,
treating the general interval $\ADset[d,n]$. 
The estimate has a sharp exponent  as $n \to \infty$, holding $d$ fixed.

We remark that the constant $\alpha_0 \approx 1.729$ in Theorem~\ref{thm:chain-upper-bound} 
appeared earlier in the work of Kalm\'{a}r~\cite{Kalmar:1931} and Klazar--Luca~\cite{KlazarL:07},
who studied the asymptotic growth of the number of chains in divisor order intervals $\sD[1,n]$, as $n \to \infty$.
%
%
\begin{defi} \label{defi:413}
A {\em  chain} $C$ between two elements $x,y$ of a poset $\sP$
with strict order $\prec$ is a set of elements $(a_0, a_1, \ldots, a_k)$ such that 
$x = a_0 \prec a_1 \prec \cdots \prec a_{k-1}\prec a_k= y$. 
Its {\em length} $\ell(C)= k$.
The set of all chains between $x,y$ is denoted $\sC_{\sP}(x,y)$,  or $\sC(x,y)$ if the
partial order is clear from context.
\end{defi} 

For any floor quotient interval $\ADset[d,n]$ we let $\TC(d,n)$ denote the total number of 
chains between $d$ and $n$.
In particular, $\TC(d,n)=0$ if $d \notAD n$, and 
 $\TC(n,n)=1$, as it counts a single chain of length $0$.
(We follow the conventions of Stanley \cite[Chap. 3]{Stanley:12}.)
%
%
\begin{thm}
\label{thm:chain-upper-bound}  
Let $\alpha_0 \approx 1.729$ denote the unique positive real value having 
$\zeta(\alpha_0) = \sum_{n\geq 1} n^{-\alpha_0} = 2$.
Then for all positive integers $d$ and $n$,
\begin{equation}\label{eq:chain-bound} 
\TC(d,n) \le \left( \frac{n}{d} \right)^{\alpha_0}.
\end{equation} 
\end{thm}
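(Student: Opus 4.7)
The plan is to prove the bound by strong induction on $n$, decomposing each chain from $d$ to $n$ according to its penultimate element. The bound is trivial when $d \notAD n$, since then $\TC(d,n) = 0$, so throughout I assume $d \AD n$.

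For $d \neq n$, every chain $d = a_0 \prec a_1 \prec \cdots \prec a_k = n$ has $k \ge 1$, and its penultimate element $a = a_{k-1}$ ranges over $\ADset[d,n]\setminus\{n\}$, with the initial segment $d \prec \cdots \prec a$ an arbitrary chain from $d$ to $a$. Partitioning by this $a$ yields the recursion
\[
\TC(d,n) = \sum_{\substack{a\in \ADset[d,n] \\ a \neq n}} \TC(d,a), \qquad \TC(d,d) = 1.
\]
Arguing by strong induction on $n$ for each fixed $d$ and applying the inductive hypothesis $\TC(d,a) \le (a/d)^{\alpha_0}$ to each summand (note that every $a$ appearing satisfies $a < n$ since $a \AD n$ and $a \neq n$), it suffices to prove the scale-invariant estimate
\[
\sum_{\substack{a \in \ADset[d,n] \\ a \neq n}} \left(\frac{a}{n}\right)^{\alpha_0} \le 1.
\]

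To establish this key inequality, I will use the reciprocal-duality property (Theorem~\ref{thm:equiv-properties} (6)): every $a \in \ADset[1,n]$ can be written as $a = \floor{n/k_a}$ with canonical cutting length $k_a \coloneqq \J_n(a) = \floor{n/a}$. Since $\J_n$ restricts to an involution on $\ADset[1,n]$ by Lemma~\ref{lem:reciprocal-involution}, the map $a \mapsto k_a$ is injective, so distinct floor quotients $a$ of $n$ yield distinct cutting lengths. The hypothesis $a \neq n$ forces $k_a \ge 2$ (otherwise $k_a = 1$ would give $a = \floor{n/1} = n$). From $a \le n/k_a$ we obtain $(a/n)^{\alpha_0} \le k_a^{-\alpha_0}$, and hence
\[
\sum_{\substack{a \in \ADset[d,n] \\ a \neq n}} \left(\frac{a}{n}\right)^{\alpha_0}
\le \sum_{k \in K} k^{-\alpha_0}
\le \sum_{k \ge 2} k^{-\alpha_0}
= \zeta(\alpha_0) - 1 = 1,
\]
where $K \subseteq \{2,3,4,\ldots\}$ denotes the image of $a \mapsto k_a$. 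The final equality uses the defining property $\zeta(\alpha_0) = 2$.

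The heart of the argument is this key inequality, and it rests on two floor-quotient-specific inputs: the involution property of $\J_n$, which ensures the cutting lengths $k_a$ are pairwise distinct as $a$ varies, and the precise calibration of $\alpha_0$ to make $\sum_{k \ge 2} k^{-\alpha_0} = 1$. Notice that the argument never invokes the constraint $d \AD a$ when bounding the sum; dropping it only enlarges the index set and strengthens the needed inequality. This uniformity is precisely why the final bound depends purely on the ratio $n/d$ rather than on the arithmetic structure of $d$ or $n$ separately.
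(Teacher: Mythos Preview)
Your proof is correct and follows essentially the same approach as the paper: induction on $n$ with $d$ fixed, decomposition of chains by their penultimate element, and the key estimate $\sum_{k\ge 2} k^{-\alpha_0} = \zeta(\alpha_0)-1 = 1$. The only cosmetic difference is that the paper sums directly over cutting lengths $k=2,\ldots,n$ (tolerating possible repetitions of the value $\lfloor n/k\rfloor$), whereas you sum over distinct floor quotients $a$ and then inject into $\{k\ge 2\}$ via the involution $\J_n$; both routes land on the same bound.
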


\begin{proof}
If $d$ is not a floor quotient of $n$, the left side on \eqref{eq:chain-bound} is zero, and the bound holds.
If $d = n$, then both sides of \eqref{eq:chain-bound} are equal to one.

Now suppose $d \AD n$ and $d\neq n$. 
We  write a chain $d = a_0 \AD a_1 \AD \cdots \AD a_{j-1} \AD a_j= n$, with $a_i \ne a_{i+1}$ for all $i$.
We count  the chains grouped  according to the value of $a_{j-1}$, which is strictly smaller than $n$.
Since $a_{j-1} \AD n$ and $a_{j-1} \neq n$, we have $a_{j-1}= \floorfrac{n}{k}$ for some integer $k \ge 2$.
There are $\TC(d, \floorfrac{n}{k})$
such chains with $a_{j-1} = \floorfrac{n}{k}$, 
so we obtain the upper bound
\begin{equation}\label{eq:upper-1}
\TC(d, n) \le \sum_{k=2}^n \TC\left(d, \floorfrac{n}{k}\right).
\end{equation}

We prove the result \eqref{eq:chain-bound} by fixing $d$ and applying induction on $n$. 
We have $\TC(d,d) = 1$ and $\TC(d, n) =0$ for $n < d$ and $d < n < 2d$, so the bound \eqref{eq:chain-bound}
holds for all $n < 2d$.
These are the base cases.
For the induction step, consider $n\geq 2d$ and assume
the induction  hypothesis holds for all $n' < n$.
We have
\begin{align*}
	\TC(d,n) &\le \sum_{k=2}^n \TC(d, \floorfrac{n}{k})\\
	&\leq \sum_{k=2}^n \left(\frac{1}{d} \floorfrac{n}{k} \right)^{\alpha_0} \\
	&\leq \sum_{k=2}^n \left( \frac{n}{kd} \right)^{\alpha_0} \\
	&\leq \left( \frac{n}{d} \right)^{\alpha_0} \sum_{k=2}^\infty \left( \frac{1}{k} \right)^{\alpha_0} 
	= \left( \frac{n}{d} \right)^{\alpha_0},
\end{align*}
where the last equality follows from our assumption that $\alpha_0$ satisfies $\displaystyle \sum_{k=1}^\infty \left( \frac{1}{k} \right)^{\alpha_0} = 2$.
This completes the induction step.
\end{proof} 

\begin{rmk}\label{rmk:312}
For fixed $d$ as $n \to \infty$, the exponent $\alpha_0$ in the upper bound \eqref{eq:chain-bound}
in Theorem \ref{thm:chain-upper-bound}
is sharp.  
If $d \AD n$ we have the lower bound
\begin{equation}\label{eq:lower-1}
\TC(d, n) \ge \sum_{k=2}^{\floor{\sqrt{n}}}  \TC(d, \floor{\frac{n}{k}}),
\end{equation}
since all chains counted on the right side are distinct.
Note that $d \AD n$ holds for all $n\geq d^2$, by Lemma \ref{lem:multiples-elem},
and the left side of \eqref{eq:lower-1} is then positive for $n \geq d^2$.
 One can  show by a straightforward induction argument,
that for fixed $d$ and any fixed small $\epsilon>0$, there is a constant $c_{d, \epsilon}>0$
depending on $d$ and $\epsilon$, such that
\[
	\TC(d,n) \ge c_{d, \epsilon} \left( \frac{n}{d} \right)^{\alpha_0 - \epsilon}
\]
holds for all $n \ge d^2$.
\end{rmk}

%
%
\section{Size of floor quotient intervals}
\label{sec:interval-size} 

In this section we prove bounds on the size of intervals in the floor quotient order.
We define the {\em  width}  as a measure of  the extent  of a floor quotient interval
  of a partial order on $\NNplus$ and compare it to 
  its cardinality.

\subsection{Width and size of floor quotient intervals } \label{subsec:51a}

\begin{defi} \label{def:width} 
 For a partial order $\sP$  on $\NNplus$ the {\em (multiplicative) width} $w(I) $ of a nonempty interval $I=\sP[m, n]$ is 
\begin{equation}
 w(\sP[m, n])  \coloneqq \frac{n}{m}.
\end{equation} 
It is a rational number, with  $w(\sP[m, n])  \ge 1$ for approximate divisor orders.
\end{defi}

The width $w(d,n)$  is invariant under scaling transformations: $(d, n) \to (ad, an)$.
Intervals of the floor quotient order $\sQ  = (\NNplus, \AD)$ are not scaling-invariant.
The set of intervals $\ADset [a, ak]$ having a fixed
integer multiplicative width  $w=k$ have sizes 
that vary with the scale parameter  $a \ge 1$.

\begin{exa}\label{exa:47}
The floor quotient intervals $\ADset [a,10a]$ for various $a$ are shown in the table below.
\begin{center}
\begin{tabular}{ccc}
 interval & elements & size \\ \hline \\[-0.8em]
$\ADset[1,10]$ & $\{ 1, 2, 3, 5, 10\}$ & 5 \\
$\ADset[2,20]$ & $\{ 2, 4, 5, 6, 10, 20\}$ & 6 \\
$\ADset[3,30]$ & $\{ 3, 6, 7, 10, 15, 30\}$ & 6 \\
$\ADset[4,40]$ & $\{ 4, 8, 13, 20, 40\}$ & 5 \\
$\ADset[9,90]$ & $\{ 9, 18, 45, 90\}$ & 4 \\
\end{tabular}

\end{center}
\end{exa}

\subsection{Upper bounds}\label{subsec:52a}

We  establish a general upper bound
on the size of  floor quotient order intervals $\ADset[d, n]$
as a function of their width $w \coloneqq {n}/{d}.$


\begin{thm}\label{thm:interval-bound} 
The floor quotient interval $\ADset[d,n]$ has size bounded by
\begin{equation}  \label{eqn:size-upper-bound} 
| \ADset[d,n]| \leq \frac{3}{2} \left( \frac{n}{d} \right)^{2/3}   .
\end{equation} 
In terms of  width,  all  floor quotient intervals $I$ satisfy 
the  bound
$
|I| \leq \frac{3}{2}w(I)^{2/3} .
$
\end{thm}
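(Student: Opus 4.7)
Write $w \coloneqq n/d$, so the bound to prove is $|\ADset[d,n]| \le \tfrac{3}{2} w^{2/3}$. The strategy is a threshold partition: for a positive integer $t$ chosen below, split
\[
  \ADset[d,n] = A_t \sqcup B_t, \qquad A_t \coloneqq \{e \in \ADset[d,n] : e \ge td\}, \quad B_t \coloneqq \{e \in \ADset[d,n] : e < td\}.
\]
I bound $|A_t|$ via the involution $\J_n$ of Lemma \ref{lem:reciprocal-involution}: the map $e \mapsto \floor{n/e}$ is injective on $\ADset[1,n]$, and the condition $e \ge td$ forces the image to lie in $\{1,\ldots,\floor{w/t}\}$, so $|A_t| \le \floor{w/t}$. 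I bound $|B_t|$ by counting floor multiples of $d$ in $[d,td)$, using the numerical semigroup structure of $\ADmult(d)$ from Theorem \ref{thm:floor-multiple-struct} and the explicit description in Lemma \ref{lem:multiples-elem}: such integers have the form $d\ell + j$ with $1 \le \ell \le t-1$ and $0 \le j < \min(d,\ell)$, which yields the piecewise estimate
\[
  |B_t| \le \begin{cases} \tfrac{1}{2}\, t(t-1) & \text{if } t \le d, \\[0.3em] td - \tfrac{1}{2}\, d(d+1) & \text{if } t > d. \end{cases}
\]

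I then optimize over $t$ in two regimes, separated by whether $w \le d^3$ or $w > d^3$. When $w \le d^3$, take $t = \lceil w^{1/3}\rceil$ (so $t \le d$): the real-valued minimum of $w/t + \tfrac{1}{2} t(t-1)$ is attained at $t^* = w^{1/3}$ with value $\tfrac{1}{2}(3 w^{2/3} - w^{1/3})$, which is strictly below $\tfrac{3}{2} w^{2/3}$. When $w > d^3$, take $t = \lceil \sqrt{w/d}\rceil$ (so $t > d$): the real-valued minimum of $w/t + td - \tfrac{1}{2} d(d+1)$ is attained at $t^* = \sqrt{w/d}$ with value $2\sqrt{wd} - \tfrac{1}{2} d(d+1)$. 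The inequality
\[
  2\sqrt{wd} - \tfrac{1}{2} d(d+1) \le \tfrac{3}{2} w^{2/3}
\]
reduces, after the substitution $\lambda = d/w^{1/3}$ and dividing by $w^{2/3}$, to $2\sqrt{\lambda} - \tfrac{1}{2}\lambda^2 - \tfrac{\lambda}{2 w^{1/3}} \le \tfrac{3}{2}$. Since the last summand on the left is nonnegative, it suffices to show $2\sqrt{\lambda} - \tfrac{1}{2}\lambda^2 \le \tfrac{3}{2}$; this is the elementary calculus fact that $g(\lambda) = 2\sqrt{\lambda} - \tfrac{1}{2}\lambda^2$ attains its global maximum $\tfrac{3}{2}$ at $\lambda = 1$.

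The main obstacle is the discreteness of $t$: both real-valued optima saturate $\tfrac{3}{2} w^{2/3}$ exactly, leaving no slack to absorb integer rounding. The resolution is that the integer bound $|A_t| \le \floor{w/t}$, rather than the weaker $w/t$, supplies the needed correction. Concretely, in Case 1 the inequality $\floor{w/t} + \tfrac{1}{2} t(t-1) \le \tfrac{3}{2} w^{2/3}$ for $t = \lceil w^{1/3}\rceil$ is verified by a short case analysis on the quotient $q = \floor{w/t}$ and remainder $r = w - qt$, using the pinching $(t-1)^3 < w \le t^3$ to constrain $q$; Case 2 is handled analogously using $(t-1)^2 d < w \le t^2 d$.
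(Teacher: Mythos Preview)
Your threshold decomposition $A_t \sqcup B_t$ and the two term bounds $|A_t| \le \lfloor w/t\rfloor$, $|B_t| \le \tfrac12 t(t-1)$ are exactly the paper's argument. The divergence is in the optimization step, and here the paper is cleaner and avoids the complications you introduce.

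First, your Case~2 is unnecessary. The bound $|B_t| \le \tfrac12 t(t-1)$ holds for \emph{all} positive integers $t$, not just $t \le d$: each block $[kd,(k+1)d)$ contains exactly $\min(k,d) \le k$ floor multiples of $d$, so $\sum_{k=1}^{t-1}\min(k,d) \le \sum_{k=1}^{t-1} k$. Thus you can use the single uniform inequality
\[
|\ADset[d,n]| \le \tfrac12 t^2 - \tfrac12 t + \frac{w}{t}
\qquad\text{for every integer } t \ge 1,
\]
regardless of whether $w \le d^3$ or $w > d^3$.

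Second, the paper dispatches the discreteness issue you flag as the ``main obstacle'' with a one-line convexity argument. The function $f(w) = \min_{t \in \NNplus}\{\tfrac12 t^2 - \tfrac12 t + w/t\}$ is piecewise linear and concave in $w$, with corner points at $(a^3+a^2,\; \tfrac32 a^2 + \tfrac12 a)$ for $a = 0,1,2,\ldots$. One checks directly that each corner lies on or below the concave curve $g(w) = \tfrac32 w^{2/3}$; concavity of $g$ then forces $f \le g$ everywhere. This replaces your promised ``short case analysis on the quotient $q$ and remainder $r$'' (which you do not actually carry out) and makes the integer-rounding gymnastics disappear.

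So your plan is sound in outline and would work if the case analyses were completed, but the paper's route via concavity at the corner points is both shorter and fully rigorous as written.
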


\begin{proof}
Recall that $\ADset[d,n]$ denotes the floor quotient interval
\[ 
\ADset[d,n] = \{ e \in \NNplus : d \AD e \AD n\}
\]
To obtain an upper bound for the size of $\ADset[d,n]$,
we use the set  inclusion
\begin{equation*} 
  \ADset[d,n] 
  \subset \{ e : d \AD e,\, d \leq e < \beta d \}  
  \bigcup \{ e : e \AD n,\, \beta d \leq e \leq n \},
\end{equation*}
 where $\beta \geq 1$ is an integer parameter to be suitably chosen later. 
 We obtain
\begin{equation}\label{eq:size-inclusion}
  |\ADset[d,n] | 
  \le  | \{ e : d \AD e,\, d \leq e < \beta d \} | 
  +  | \{ e :  e \AD n,\, \beta d \leq e \leq n \}|. 
\end{equation}
  We estimate the two terms on the right side of this bound  separately.
 \begin{enumerate}
 \item[(1)]  
In the ``small'' range,  $d \le e \le \beta d$, 
we use the bound from Lemma~\ref{lem:multiples-elem} that 
$d$ has at most $k$ floor multiples  in each subinterval  
$\{ kd \leq e < (k+1)d\}$.
We obtain 
\begin{align}\label{eqn:419} 
|\{e :  d\AD e,\, d \leq e < \beta d \}| 
&\leq 1 + 2 + \cdots + ({\beta} -1) 
= \frac{1}{2} {\beta}^2 - \frac12 {\beta}.
\end{align} 
(The second inequality is an equality if $\beta \leq d$.)
\item[(2)]
In the ``large'' range, $\beta d \le e \le n$
we have the equality of sets 
\[ 
 \{ e : e \AD n,\, \beta d \leq e \leq n \} 
 =  \{ \floor{\frac{n}{j}} : 1\leq j \leq \frac{n}{\beta d}\} 
 \]
(not counting  multiplicities on the right-hand side). 
 In consequence
 \begin{equation}\label{eqn:420} 
| \{ e : e \AD n,\, \beta d \leq e \leq n \} |\leq \floor{\frac{n}{\beta d}} 
\leq \frac1{\beta} \left( \frac{n}{d} \right).
\end{equation} 
\end{enumerate} 

The upper  bounds \eqref{eqn:419} and \eqref{eqn:420} 
 applied  to  \eqref{eq:size-inclusion} yield,   for each  
 $\beta \ge 1$, 
 \begin{equation} 
 \label{eq:ndiv-bound}
   |\ADset[d,n]| \leq  \frac{1}{2} {\beta}^2 - \frac12 {\beta} + \frac{1}{{\beta}} \left( \frac{n}{d} \right).
  \end{equation}
We now consider optimizing the choice of $\beta$.
 It suffices to show that for any real $x > 0$, we have
 \begin{equation}\label{eq:421}
 \min_{\beta \in \NNplus} \{ \frac12 \beta^2 - \frac{1}{2} \beta + \frac1\beta x \} 
 = \min\{ x, 1+\frac{1}{2 }x, 3 + \frac13 x, \ldots \}
 \leq \frac{3}{2} x^{2/3}.
 \end{equation}
 The expression
 $f(x) = \min_{\beta \in \NNplus} \{ \frac{1}{2} \beta^2 - \frac{1}{2 }\beta + \frac1\beta x \} $ 
 is a piecewise linear function of $x$,
 and its graph is the upper convex hull of the points
 $$
 \{ (0,0), (2,2), (12,7),\, (36,15),\, (80,26),\, \ldots \}
 = \{\textstyle (a^3+a^2 , \frac32 a^2 + \frac12 a) : a =0,1,2,\ldots \}.
 $$
 Each corner-point $(a^3+a^2, \frac{3}{2} a^2 + \frac{1}{2} a)$ of $f(x)$ lies below the graph of $g(x) = \frac{3}{2} x^{2/3}$.
 Since $g(x)$ is a concave function, this implies that the entire graph of $f(x)$ also lies below $g(x)$.
 This verifies the inequality \eqref{eq:421}.

 The desired result now follows from the bounds \eqref{eq:ndiv-bound} and \eqref{eq:421} with $x = \frac{n}{d}$.
 \end{proof}

The  upper bound of Theorem \ref{thm:interval-bound}  is asymptotically sharp for certain intervals 
$\ADset[d,n]$  having width $w = \frac{n}{d} \to \infty$.
We first consider an example, which we generalize in Proposition~\ref{prop:410}.

%
\begin{exa} \label{ex:410a} 
The  floor quotient interval $\ADset[10, 10000]$ corresponds to
 $d = 10$, $n=10000$, and width $w = 1000$.
It consists of the numbers
\begin{align*} 
\ADset[10, 10000] &= \{10\} \cup \{ 20, 21\} \cup\{30, 31, 32\} \cup \cdots \cup \{ 90, 91,\ldots , 98 \}\\
& \qquad  \bigcup  \{100= \floorfrac{10000}{100}, \floorfrac{10000}{99}, \ldots , \floor{\frac{10000}{2}}, \floorfrac{10000}{1}\} .
\end{align*} 
The size of this interval is 
\[ 
| \ADset[10,10000] |  = 1 + 2 + \cdots + 9 + 100 = 145 .\]
Theorem \ref{thm:interval-bound}  gives the upper bound
$
| \ADset[10, 10000]| \le \frac{3}{2} (1000)^{2/3} = 150 .
$

In  comparison, the size of the floor quotient interval
$\ADset[1,w] = \ADset[1,1000]$ is
\[ 
|\ADset[1,1000]|  =  2 \floor{ \sqrt{1000}} = 62.
\]
\end{exa}

%
%
\begin{prop}\label{prop:410} 
For any $d \geq 1$, 
the size of the  floor quotient order interval $\ADset[d,d^4]$ is
\[
|\ADset[d,d^4] | 
= \frac{3}{2}d^2 - \frac{1}{2}  d .
\]
In particular
$
|\ADset[d,n]| 
= \frac{3}{2} \left(\frac{n}{d}\right)^{2/3} - \frac12 \left(\frac{n}{d}\right)^{1/3}
$
when $n = d^4$.
\end{prop}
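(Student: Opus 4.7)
The plan is to split $\ADset[d, d^4]$ into its ``below $d^2$'' and ``at or above $d^2$'' pieces, count each exactly using Lemma~\ref{lem:multiples-elem} and Proposition~\ref{prop:interval-complement}, and add. Concretely, I would set
\[
	A = \{ e \in \ADset[d,d^4] : d \le e < d^2 \},
	\qquad
	B = \{ e \in \ADset[d,d^4] : d^2 \le e \le d^4 \},
\]
so $\ADset[d,d^4] = A \sqcup B$ (disjointly), and argue $|A| = \tfrac12 d(d-1)$ and $|B| = d^2$.

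For $A$, the condition $e \AD d^4$ is automatic. Indeed, by Proposition~\ref{prop:interval-complement}(b) applied to $n=d^4$ (so $s = \sfloor{\sqrt{d^4}} = d^2$), one has $\ADsetsmall(d^4) = \{1, 2, \ldots, d^2\}$, so every integer $e \le d^2$ is a floor quotient of $d^4$. Thus $A = \{ e : d \AD e,\ d \le e < d^2\}$, the set of floor multiples of $d$ in $[d, d^2)$. Using Lemma~\ref{lem:multiples-elem}(b), in each window $kd \le e < (k+1)d$ with $1 \le k \le d-1$ there are exactly $k$ floor multiples of $d$ (namely $e = kd + r$ for $0 \le r < k$). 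Summing gives
\[
	|A| = 1 + 2 + \cdots + (d-1) = \tfrac12 d(d-1).
\]

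For $B$, the condition $d \AD e$ is automatic. Indeed, by Lemma~\ref{lem:multiples-elem}(a), every $e \geq d^2$ satisfies $d \AD e$. So $B = \{ e : e \AD d^4,\ e \ge d^2 \}$, which is precisely $\ADsetlarge(d^4)$ as defined in Proposition~\ref{prop:interval-complement}. Since $n = d^4$ and $s = d^2$, part (b) of that proposition gives
\[
	\ADsetlarge(d^4) = \{d^4, \sfloor{d^4/2}, \ldots, \sfloor{d^4/d^2}\},
\]
and by part (a) the map $\J_{d^4}$ bijects $\ADsetsmall(d^4) = \{1, \ldots, d^2\}$ onto $\ADsetlarge(d^4)$, so these $d^2$ values are distinct. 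Hence $|B| = d^2$.

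Combining, $|\ADset[d, d^4]| = |A| + |B| = \tfrac12 d(d-1) + d^2 = \tfrac32 d^2 - \tfrac12 d$. Substituting $n/d = d^3$ gives the reformulation $\tfrac32 (n/d)^{2/3} - \tfrac12 (n/d)^{1/3}$. There is no serious obstacle here; the only thing to verify carefully is that the $e = d^2$ boundary falls into $B$ and that the two simultaneous conditions $d \AD e$ and $e \AD d^4$ decouple on the two pieces, which is exactly what Lemma~\ref{lem:multiples-elem}(a) and Proposition~\ref{prop:interval-complement}(b) deliver.
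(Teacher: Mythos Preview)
Your proof is correct and takes essentially the same approach as the paper. The paper's one-line proof simply says ``check that the bound \eqref{eq:size-inclusion} with $\beta = d$ is an equality in this case,'' and your argument is precisely that verification spelled out: the split at $e = d^2$ is the choice $\beta d = d^2$, your count $|A| = \tfrac12 d(d-1)$ via Lemma~\ref{lem:multiples-elem}(b) is the equality case of \eqref{eqn:419}, and your count $|B| = d^2$ via Proposition~\ref{prop:interval-complement} is the equality case of \eqref{eqn:420}.
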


\begin{proof}
It suffices to  check that the bound 
\eqref{eq:size-inclusion} with $\beta = d$ 
is an equality in this case.
\end{proof} 

\subsection{Lower bounds}\label{subsec:53a} 

We obtain  lower bounds for the size of a  floor quotient interval  $\ADset[d,n]$ 
in terms of the parameters $n$ and $d$ separately. We allow empty intervals.

%
%
\begin{thm}\label{thm:interval-lower-bounds} 
 {\em (Floor quotient interval size lower  bounds)} 

\begin{enumerate}[(a)]
    \item
     If $1 \leq d \leq n^{1/4}$, then
    \[
    |\ADset[d,n] | \geq \frac{3}{2} n^{1/2} - \frac12 n^{1/4} - 1.\]
    \item
    If $n^{1/4} \leq d \leq n^{1/2}$, then 
    \[ 
    |\ADset[d,n]| \geq \frac{3}{2}\left( \frac{n}{d^2} \right) - \frac32 \left( \frac{n}{d^2} \right)^{1/2}.
    \]
    
    \item
    If $n^{1/2} \leq d \leq n$, then 
    \[
    |\ADset[d,n]| = \begin{cases}
    \sigma_0(k) & \text{if $d = \floor{ {n}/{k} }$
    for some } 1 \le k \le \sqrt{n} \\
    0 & \text{otherwise},
    \end{cases}
    \] 
    where $\sigma_0(k)= \sum_{d |k} 1 $ denotes the number of  divisors of $k$.
\end{enumerate}
\end{thm}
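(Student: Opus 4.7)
The plan is to treat the three size ranges of $d$ separately. Throughout, let $s = \floor{\sqrt{n}}$ and write $M(d)$ for the set of floor multiples of $d$, so that $\ADset[d,n] = M(d) \cap \ADset[1,n]$. Part (c), where $d \geq \sqrt{n}$, is immediate from Theorem \ref{thm:upper-interval-struct}: any $e$ with $d \AD e \AD n$ has $e \geq d \geq \sqrt{n}$, so both $d$ and $e$ lie in $\ADsetlarge(n)$; writing $d = \floor{n/k}$ and $e = \floor{n/j}$ with $j,k \leq \sqrt{n}$, the theorem gives $d \AD e$ iff $j \mid k$, yielding $|\ADset[d,n]| = \sigma_0(k)$ (and $0$ if no such $k$ exists). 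For parts (a) and (b), the decomposition $\ADset[1,n] = \ADsetsmall(n) \cup \ADsetlarge(n)$ from Proposition \ref{prop:interval-complement} (with at most one common element) lets me count the small and large contributions separately using Lemma \ref{lem:multiples-elem}.

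For part (a), where $d \leq n^{1/4}$, we have $d^2 \leq s$, so every element of $\ADsetlarge(n)$ is $\geq s \geq d^2$ and automatically lies in $M(d)$ by Lemma \ref{lem:multiples-elem}(a), contributing $s$ elements. For the small part, Lemma \ref{lem:multiples-elem}(b) gives $k$ floor multiples in each slab $[kd,(k+1)d)$ with $k < d$, and every integer of $[d^2, s]$ is a floor multiple, so $|M(d) \cap \ADsetsmall(n)| = \frac{d(d-1)}{2} + (s - d^2 + 1)$. Subtracting the overlap of at most one gives $|\ADset[d,n]| \geq 2s - \frac{d(d+1)}{2}$. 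To extract the sharp constant $-1$, I will split the two Proposition \ref{prop:interval-complement}(b) subcases: when $s^2 \leq n < s(s+1)$ one has the strict bound $s > \sqrt{n} - \tfrac12$ (from $n < (s+\tfrac12)^2$), and when $s(s+1) \leq n$ the overlap vanishes so one gains an extra $+1$. Combined with $d(d+1) \leq \sqrt{n} + n^{1/4}$, both subcases deliver $|\ADset[d,n]| \geq \tfrac{3}{2}\sqrt{n} - \tfrac{1}{2}n^{1/4} - 1$.

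For part (b), set $m = \floor{s/d}$ and $p = \floor{n/d^2}$. Lemma \ref{lem:multiples-elem}(b) produces $\sum_{k=1}^{m-1} k = \frac{m(m-1)}{2}$ floor multiples of $d$ in $[d,md)\subset [1,s]$ plus at least the one element $md$ in the residual interval $[md, s]$, and the $p$ elements $\floor{n/k}$ for $k \leq p$ are all $\geq d^2$, hence lie in $M(d) \cap \ADsetlarge(n)$; after subtracting the overlap of at most one, $|\ADset[d,n]| \geq \frac{m(m-1)}{2} + p$. The key observation is the clean bound $m \geq \floor{\sqrt{n}/d}$, which holds because $d \cdot \floor{\sqrt{n}/d}$ is a multiple of $d$ not exceeding $\sqrt{n}$ and therefore not exceeding $s$. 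Setting $z = \sqrt{n}/d$, this gives $m \geq \floor{z} \geq z - 1$, and so $m(m-1) \geq (z-1)(z-2)$ (trivially when $z \leq 2$ since the right side is nonpositive, and by componentwise comparison $m \geq z-1 \geq 0$ and $m-1 \geq z-2 \geq 0$ when $z > 2$). Combined with $p \geq z^2 - 1$, the sum simplifies algebraically to $\tfrac{3}{2}z^2 - \tfrac{3}{2}z = \tfrac{3}{2}(n/d^2) - \tfrac{3}{2}(n/d^2)^{1/2}$.

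The main obstacle is really the combinatorial bookkeeping: counting correctly in the residual slab $[md,s]$, tracking the at-most-one overlap between $\ADsetsmall(n)$ and $\ADsetlarge(n)$, and in part (a) separating the two Proposition \ref{prop:interval-complement}(b) subcases carefully enough to land on the constant $-1$ rather than the $-2$ one would get from naive estimates. No individual step is deep.
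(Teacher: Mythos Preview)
Your proof is correct and follows essentially the same strategy as the paper: part (c) via Theorem~\ref{thm:upper-interval-struct}, and parts (a)--(b) by combining the floor-multiple structure of Lemma~\ref{lem:multiples-elem} with the decomposition $\ADset[1,n]=\ADsetsmall(n)\cup\ADsetlarge(n)$, leading to the same intermediate bound $\tfrac12\lfloor\sqrt{n}/d\rfloor(\lfloor\sqrt{n}/d\rfloor-1)+\lfloor n/d^2\rfloor$ in (b) and the same $2\sqrt{n}-1-\tfrac12 d(d+1)$ in (a). The only organizational difference is that in (a) the paper shortcuts your explicit small/large count and case split by writing $|\ADset[d,n]|=|\ADset[1,n]|-|\{m:d\notAD m\}|$ and quoting Corollary~\ref{cor:interval-size} for $|\ADset[1,n]|>2\sqrt{n}-2$ (which already encodes your two subcases); your route is slightly longer but equally valid and arguably more self-contained.
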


\begin{proof}
(a) The set of  floor multiples of $d$ contains all integers $m \geq d^2$,
and  the set of floor quotients  of $n$ contains all integers $m \leq \sqrt{n}$.
If $d \leq {n}^{1/4}$, then we have
\begin{equation*}
    \{ m : d \notAD m \}
    \subset \{ m : m \leq d^2\}
    \subset \{m : m\leq \sqrt{n} \}
    \subset \{ m  :  m \AD n \} .
\end{equation*}
This inclusion implies 
\begin{align*}
|\ADset[d,n]| =  
  | \{m : d \AD m \AD n \} |
 &=
 | \Big( \{ m  :  m \AD n\} \smallsetminus \{ m : d \notAD m \} \Big) |\\
 &\geq 
 |\{ m  :  m \AD n\}| - | \{ m : d \notAD m \} |.
\end{align*}
Using 
Corollary \ref{cor:interval-size}
 we have $ |\{ m  :  m \AD n\}| > 2 \sqrt{n} -2.$
By  Lemma~\ref{lem:multiples-elem} (b)    we have 
$| \{ m : d \notAD m \} | = \frac{1}{2} (d-1)(d+2).$
Substituting these estimates in the previous equation yields 
\begin{align*}
|\ADset[d,n]| 
 &> 
 \left( 2 \sqrt{n} -2\right) -  \frac{1}{2} (d-1)(d+2)  \\
 &= 2{n}^{1/2} - \frac{1}{2 }d^2 - \frac{1}{2} d - 1 .
\end{align*}
By the assumption that $d \leq n^{1/4}$,
this bound implies
$
|\ADset [d,n]| > \frac{3}{2} n^{1/2} - \frac{1}{2} n^{1/4} - 1.
$

(b) 
If ${n}^{1/4} < d \leq n^{1/2}$, then
\[
 \ADset[d,n] = \{m : d \AD m \AD n \} 
\supset \{m :  m \leq \sqrt{n},\, d \AD m \} 
\sqcup \{m :   d^2 \leq m ,\, m \AD n \} .
\]
The first set contains
\[ 
\{ m : m \leq \sqrt{n} ,\, d \AD m \} \supset \bigcup_{1 \leq k \leq \sqrt{n}/d-1} \{ kd, kd + 1, kd+2, \ldots, kd+k-1 \}
 \]
and the second set contains
\[
 \{ m : d^2 \leq m ,\, m \AD n \} = \{ \floor{\frac{n}{k}} : 1 \leq k \leq \frac{n}{d^2} \}.
\]
This implies
\begin{align*}
   | \ADset[d,n] | 
 &\geq |\{ m \leq \sqrt{n} : d \AD m \}|  + 
 |\{ d^2 \leq m : m \AD n \} |  \\
 &\geq \left(\frac{1}{2}  \floor{\frac{\sqrt{n}}{d}} ( \floor{\frac{\sqrt{n}}{d}} - 1) \right) + \left( \floor{\frac{n}{d^2}} \right) \\
 &\geq \left( \frac{1}{2} \left( \frac{\sqrt{n}}{d} - 2 \right)\left( \frac{\sqrt{n}}{d} - 1\right)\right) 
 + \left( \frac{n}{d^2}-1\right) \\
 &= \frac{3}{2}\left( \frac{n}{d^2} -  \frac{\sqrt{n}}{d}\right). 
\end{align*}

(c) If $d > \sqrt{n}$, 
then  Theorem~\ref{thm:upper-interval-struct} implies that 
the floor quotient interval $\ADset[d,n]$ is isomorphic to the
interval  $\sD[1, k]$ in the divisor order,
if $d = \floor{ \frac{n}{k}}$. 
Otherwise if $d \ne \floor{ \frac{n}{k}}$ for any $k$,
then $\ADset[d,n]$ is empty.
\end{proof}

\begin{rmk}\label{rmk:412} 
For intervals $\ADset[a, aw]$, holding the width $w$ fixed and varying $a \ge 1$,
 in qualitative terms it appears  that the size $|\ADset[a, aw]|$
 first increases and then decreases as $a \to \infty$.
\begin{enumerate}[(i)]
\item
 For $a=1$ the interval has size
$| \ADset[1, w] | = 2 {w}^{1/2} +O(1)$.  
\item
For $a = \floor{w^{1/3}}$,
the argument of Proposition \ref{prop:410} applies to give the size estimate
\[
| \ADset[a, aw] |
 = \frac{3}{2} w^{2/3} +O (w^{1/3}).
\]
 \item
For  $a \ge w$, $|\ADset[a,aw]| = \sigma_0(w) \ll w^{\epsilon}$
for any $\epsilon > 0$,
as $w \to \infty.$
\end{enumerate} 
For fixed $w$ we do not know whether the sequence of values $|\ADset[a, aw]|$ is unimodal as a function of $a \ge 1$. 
\end{rmk}

 %
 %
 \section{M\"{o}bius function of floor quotient order}
 \label{sec:mobius}  
 
 We review the classical M\"{o}bius function on the divisor partial order $(\NNplus,\divides)$ and then present results on
 the floor quotient M\"{o}bius function.    
 
The notion of the  two-variable M\"{o}bius function
of a locally finite partial order  was formalized  by   Rota \cite{Rota:64} in 1964, 
see also Zaslavsky~\cite{Zaslavsky:87} and Stanley~\cite[Chapter 3]{Stanley:12}.
M\"{o}bius functions for lattices and associated M\"{o}bius inversion formulae
have a history going back to the 1930's and earlier, 
see  Weisner \cite{Weisner:35}, Hall \cite{Hall:36}, and Ore \cite[pp. 181--207]{Ore:62}. 
Earlier work of Eric Temple Bell \cite{Bell:1915,Bell:1923} is relevant.

%
%
\subsection{Preliminaries on M\"{o}bius functions }\label{subsec:61}

 Given a partial order $\sP= (\NNplus, \preccurlyeq_\sP) $, 
 the M\"{o}bius function 
 of $\sP$ is the convolution inverse of the zeta function $\zeta_{\sP}(s,t)$
 of the partial order. 
That is, $\mu_\sP$ is the function in the incidence algebra 
 such that for all $d, n \in \NNplus$ we have
\[
  \sum_{{m \in \NNplus}} \mu_{\sP} (d, m)\zeta_{\sP}(m, n) = \begin{cases} 
  1  & \mbox{if} \,\, d= n \\ 
  0 & \mbox{if}\,\,  d \neq n
  \end{cases}. 
\]
 In other words, $\mu_\sP$ is the unique function $\mu_\sP : \NNplus \times \NNplus \to \ZZ$ satisfying
 the initial conditions 
  \begin{align}
  \mu_{\sP}(n,n) &=1 \qquad\text{for all }n \in \NNplus, 
  \\
 \mu_\sP(d,n) &= 0 \qquad\text{if } d \not\preccurlyeq_\sP n,  
  \end{align}
and the recursion  
\begin{equation}
\label{eqn:Mob1} 
 \sum_{m : \, d \,\preccurlyeq_{\sP}\, m \,\preccurlyeq_{\sP}\, n} \mu_{\sP} (d, m) =0
 \qquad \text{for all $d \neq n$ in }\NNplus.
\end{equation}

 The dual partial order $\sP^{\ast} = (\NNplus, \preccurlyeq_{\sP^\ast})$ to $\sP$ 
 has $d \preccurlyeq_{\sP} e \, \Leftrightarrow  \, e \preccurlyeq_{\sPdual} d$.
 The {\em dual M\"{o}bius function} satisfies  $\mu_{\sP^{\ast}} (n,d) = \mu_{\sP}(d,n).$
 In particular the relation \eqref{eqn:Mob1} for $\mu_{\sP^{\ast}}$ yields a dual recursion
 for $\mu_{\sP}$: 
 \begin{equation}\label{eqn:Mob2} 
 \sum_{ m:\, d \,\preccurlyeq_{\sP}\, m \,\preccurlyeq_{\sP}\, n} \mu_{\sP} (m,n) = 0
 \qquad \text{for all $d \neq n$ in }\NNplus.
 \end{equation}

For comparative purposes we recall the 
M\"{o}bius function of  the divisor order  $\sD = (\NNplus, \divides )$.
The classical
 M\"{o}bius function $\mu(n)$ is given by
 $$   
\mu(n) = \begin{cases} 
(-1)^k  & \,\mbox{if} \,\, n= p_1 p_2 \cdots p_k \,\, \mbox{is squarefree, where $p_i$ are prime} \\
0 & \,\mbox{if $n$ is not squarefree}.
\end{cases}
$$
For any $m, n$ the two-variable poset M\"{o}bius
function $\mu_\sD(m,n)$ reduces to a one-variable classical M\"{o}bius function: 
if $m \divides n$ then
$$
\mu_\sD(m, n) = \mu_\sD(1, \frac{n}{m})= \mu(\frac{n}{m}).
$$
This equality is a consequence of the scaling-invariance  property of divisor intervals.

%
%

\subsection{Floor quotient  M\"{o}bius function}\label{subsec:62}

The floor quotient M\"{o}bius function $\muAD(d, n)$ 
genuinely depends on both arguments. 
We show that for ``half'' of its inputs,
namely when
$d > \sqrt{n}$,
the value $\muAD(d, n)$ directly relates to the classical M\"{o}bius function.

We first  remark that many M\"{o}bius function
calculations can be made using  the basic two-variable M\"{o}bius function recursion
\begin{align}
\label{eqn:mu-recursion0}
 \muAD(d,n) &= - \sum_{\substack{d \AD e \AD n  \\ e \neq n}} \muAD(d,e)
 \qquad\text{when}\, \, n\neq d,
\end{align} 
 where the right side sums over the floor quotients $e \in \ADset[d,n] \setminus \{n\}$.

\begin{thm}
\label{thm:upper-interval-mobius} 
Fix a positive integer $n$. For any $k, \ell$ with $1 \leq \ell \le k  \leq \floor{\sqrt{n}}$,
the floor quotient M\"{o}bius function  satisfies
$$
\muAD (\floorfrac{n}{k}, \floorfrac{n}{\ell})  = \begin{cases} 
\mu(\frac{k}{\ell} ), & \mbox{if} \quad \ell \mid k\\
0 & \mbox{if} \quad \ell \nmid k,
\end{cases} 
$$
where $\mu(\cdot)$ is the classical M\"{o}bius function.
In particular, 
$$
\left| \muAD(\floor{\frac{n}{k}}, \floorfrac{n}{\ell}) \right| \le 1
\quad \mbox{if}  \quad 1 \le \ell \le k \leq \floor{ \sqrt{n}}.
$$
\end{thm}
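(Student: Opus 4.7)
The plan is to transport the computation to a divisor-order M\"obius function via the anti-isomorphism of Theorem~\ref{thm:upper-interval-struct}, which describes the restriction of the floor quotient order to the set $\ADsetlarge(n)$.

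If $\ell \nmid k$, Theorem~\ref{thm:upper-interval-struct} gives $\floor{n/k} \notAD \floor{n/\ell}$, so by definition of the two-variable M\"obius function $\muAD(\floor{n/k}, \floor{n/\ell}) = 0$, matching the claim.

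Now suppose $\ell \mid k$. The key observation is that the floor quotient interval $\ADset[\floor{n/k}, \floor{n/\ell}]$ is contained in $\ADsetlarge(n)$. Granting this, Theorem~\ref{thm:upper-interval-struct} restricts to a poset anti-isomorphism
\begin{equation*}
  \ADset[\floor{n/k},\, \floor{n/\ell}] \;\xrightarrow{\sim}\; \sD[\ell, k]^{\ast}, \qquad \floor{n/j} \longmapsto j,
\end{equation*}
sending the minimum $\floor{n/k}$ to $k$ (the maximum of $\sD[\ell,k]$) and $\floor{n/\ell}$ to $\ell$. Since the two-variable M\"obius function is determined by the induced order on the interval and satisfies $\mu_{P^{\ast}}(y, x) = \mu_P(x, y)$ for any locally finite poset $P$, I obtain
\begin{equation*}
  \muAD\!\left( \floorfrac{n}{k},\; \floorfrac{n}{\ell} \right)  \;=\;  \mu_{\sD}(\ell, k)  \;=\;  \mu\!\left( \frac{k}{\ell} \right),
\end{equation*}
the last equality being the standard formula for the M\"obius function of the divisor poset.

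The main obstacle is verifying the containment $\ADset[\floor{n/k},\floor{n/\ell}] \subseteq \ADsetlarge(n)$. Each $e$ in this interval lies in $\ADset[1,n]$ by transitivity and satisfies $e \geq \floor{n/k}$ since $\AD$ refines the additive order. Writing $s = \floor{\sqrt{n}}$, the bounds $k \leq s$ and $n \geq s^2$ yield $\floor{n/k} \geq n/s - 1 \geq s - 1$, and a short computation upgrades this to $\floor{n/k} \geq s$, with equality possible only when $k = s$ and $s^2 \leq n < s(s+1)$. If $e \geq s+1$, then $e \notin \ADsetsmall(n) = \{1,\ldots,s\}$ by Proposition~\ref{prop:interval-complement}(b), so $e \in \ADsetlarge(n)$. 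The sole borderline case $e = s$ only arises in the regime $s^2 \le n < s(s+1)$, where Proposition~\ref{prop:interval-complement}(b) places $s \in \ADsetlarge(n)$ as well. The final absolute-value bound $|\muAD(\floor{n/k},\floor{n/\ell})| \leq 1$ is then immediate from $|\mu(m)| \leq 1$ for the classical M\"obius function.
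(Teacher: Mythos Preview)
Your proof is correct and follows essentially the same route as the paper: both use the anti-isomorphism of Theorem~\ref{thm:upper-interval-struct} together with the general duality $\mu_{P^\ast}(y,x)=\mu_P(x,y)$ to reduce to the classical divisor M\"obius function. The one substantive difference is that you explicitly verify the containment $\ADset[\floor{n/k},\floor{n/\ell}]\subseteq\ADsetlarge(n)$, which is needed for the anti-isomorphism on $\ADsetlarge(n)$ to transport the M\"obius value; the paper's proof uses this containment implicitly without justification, so your version is in fact more complete on this point.
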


\begin{proof}
The result follows from Theorem \ref{thm:upper-interval-struct}  and its proof. 
It states that
$\ADsetlarge(n)= \{ \floor{ \frac{n}{k} } \,: \, \, 1 \le k \le \floor{\sqrt{n}} \}$
is order anti-isomorphic to the divisor order
restricted to the initial additive segment $( \{1, \ldots, \floor{\sqrt{n}} \}, \divides~)$.
Specifically, for $1 \leq \ell \le k  \leq \floor{\sqrt{n}}$ we have
\begin{equation}\label{eq:reverse-divisor-order1} 
\floor{\frac{n}{k}} \AD \floor{\frac{n}{\ell}}
\qquad\text{if and only if}\qquad
\ell \divides k.
\end{equation} 
We have two cases: $\ell \mid k$ and $\ell \nmid k$.
\begin{enumerate}[(i)]
\item
$\ell \mid k$. 
In this case $\floor{\frac{n}{k}} \AD \floor{\frac{n}{\ell}}$.
 It is a general fact
for finite poset intervals that their M\"{o}bius values at interval endpoints agree  with that
of their duals. Thus
\[
\mu_1( \floorfrac{n}{k}, \floorfrac{n}{\ell}) = \mu_1^{\ast}( \floorfrac{n}{\ell}, \floorfrac{n}{k}),
\]
see for example Stanley \cite[Proposition 3.8.6 ff]{Stanley:12}. 
By the order-isomorphism 
\[
(\ADsetlarge(n))^{\ast}= (\ADsetlarge(n), {\,\preccurlyeq_{1}^{\ast}\,} )  \simeq  ( \sA[1, \floor{ \sqrt{n} }], \divides),
\]
we have 
\[
 \mu_1^{\ast}( \floorfrac{n}{\ell}, \floorfrac{n}{k})= \mu_{\sD}( \ell, k) = \mu(\frac{k}{\ell}).
\]
Combining these steps gives $\mu_1( \floorfrac{n}{k}, \floorfrac{n}{\ell}) = \mu({k}/{\ell})$.

\item  
$\ell \nmid k$. In this case $\floor{\frac{n}{k}} \notAD \floor{\frac{n}{\ell}}$.
By convention 
$
 \mu_1( \floorfrac{n}{k}, \floorfrac{n}{\ell}) = 0.
$ \qedhere
\end{enumerate} 
\end{proof} 


\begin{rmk}\label{rmk:72}
From these formulas, the floor quotient M\"{o}bius function values  $\muAD(d, n)$,
for $d < \sqrt{n}$, 
can in principle be recursively expressed as complicated (weighted) averages of classical M\"{o}bius function values.
Numerical results given in Section \ref{subsec:66} indicate the values of such $\muAD(d,n)$ can be very large. 
\end{rmk}

%
%
\begin{cor}\label{cor:63} 
For any $d\geq \sqrt{n}$,
the floor quotient M\"{o}bius function $\muAD$ satisfies
\begin{equation}
\muAD \left(d, n \right) = \begin{cases}
 \displaystyle \mu(\floorfrac{n}{d}) & \text{if } d \AD n ,\\
 0 &\text{otherwise},
 \end{cases}
\end{equation}
where $\mu(\cdot)$ is the classical M\"{o}bius function.
In particular, 
\begin{equation}
|\muAD (d, n)| \leq 1
\quad \mbox{if}  \quad d \geq \sqrt{n}.
\end{equation}
\end{cor}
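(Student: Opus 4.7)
The plan is to obtain Corollary 6.3 as a direct specialization of Theorem 6.2 combined with the structural description of $\ADsetlarge(n)$ provided by Proposition 4.2. The key observation is that when $d \geq \sqrt{n}$ and $d \AD n$, then $d$ automatically lies in $\ADsetlarge(n)$ and can therefore be written as $\floor{n/k}$ for a uniquely determined integer $k$ with $1 \le k \le \floor{\sqrt{n}}$. Moreover $n$ itself equals $\floor{n/1}$, which places the pair $(d,n)$ exactly in the range where Theorem 6.2 evaluates the M\"{o}bius function.

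First I would dispose of the case $d \notAD n$: by the standing convention on the two-variable M\"{o}bius function we have $\muAD(d,n) = 0$, which already matches the stated formula. Next, assume $d \ge \sqrt{n}$ and $d \AD n$. By Proposition 4.2(a) we have $d \in \ADsetlarge(n)$, and by the involution-duality property of Lemma 4.1 we can write $d = \floor{n/k}$ where $k = \J_n(d) = \floorfrac{n}{d}$. The hypothesis $d \geq \sqrt{n}$ forces $k \le \floor{\sqrt{n}}$. Now apply Theorem 6.2 with this $k$ and with $\ell = 1$: the inequality $1 \le \ell \le k \le \floor{\sqrt{n}}$ is satisfied, $\ell = 1$ trivially divides $k$, and $\floor{n/1} = n$, so
\begin{equation*}
\muAD(d,n) \;=\; \muAD\!\left(\floorfrac{n}{k},\,\floorfrac{n}{1}\right) \;=\; \mu(k/1) \;=\; \mu(k) \;=\; \mu\!\left(\floorfrac{n}{d}\right).
\end{equation*}
The universal bound $|\muAD(d,n)| \le 1$ for $d \ge \sqrt{n}$ then follows at once from the classical bound $|\mu(m)| \le 1$.

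I do not expect any substantive obstacle: the corollary is essentially a repackaging of Theorem 6.2 at the distinguished parameter $\ell = 1$, together with the identification of large floor quotients afforded by Proposition 4.2. The one minor point that warrants explicit mention is that the $k$ certifying $d \in \ADsetlarge(n)$ is precisely the integer $k = \floorfrac{n}{d}$ satisfying $\floor{n/k} = d$; this is exactly the content of the involution property of $\J_n$ on $\ADset[1,n]$, so no further work is required.
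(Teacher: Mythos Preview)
Your proposal is correct and follows essentially the same approach as the paper: the paper's proof simply reads ``apply Theorem~\ref{thm:upper-interval-mobius} with $\ell=1$ and $k = \floorfrac{n}{d}$,'' and you have spelled out the supporting details (identifying $d \in \ADsetlarge(n)$ via Proposition~\ref{prop:interval-complement} and the involution property so that $k = \floorfrac{n}{d}$ satisfies $1 \le k \le \floor{\sqrt{n}}$). No further comment is needed.
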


\begin{proof}
If $d \geq \sqrt{n}$ and $d \AD n$,
 apply Theorem~\ref{thm:upper-interval-mobius} with $\ell=1$ and $k = \floorfrac{n}{d}$.
If $d\notAD n$, then $\muAD(d,n) =0$ by definition.
\end{proof}

%
%

\subsection{Upper bounds}\label{subsec:mobius-bounds}

We show that for the floor quotient poset, the  value of $\mu_1(d,n)$ is bounded above by a 
 function of polynomial growth in the width statistic $w(d,n)  = {n}/{d}$. 

A basic result of Philip Hall, treated by Rota \cite[Sec.3, Prop. 6]{Rota:64},
gives another way to compute M\"{o}bius function values. It 
is formulated in Stanley \cite[Prop. 3.8.5]{Stanley:12} as follows.

\begin{prop}[Philip Hall's theorem] 
\label{prop:PH} 
Let $\widehat P$ be a finite poset 
with smallest and largest elements
$\hat{0}$ and $\hat{1}$. 
Let $c_i$ be the number of chains
$\hat{0} = t_0 < t_1 < \cdots <t_{i-1} < t_i = \hat{1}$ of length $i$ between $\hat{0}$ and $\hat{1}$.
Then 
\begin{equation}\label{eq:PH} 
\mu_{\widehat P}(\hat{0}, \hat{1}) = c_0 - c_1+ c_2 - c_3 + \cdots.
\end{equation}
(If $\hat{0} \neq \hat{1}$ in $\widehat P$, then $c_0 = 0$ and $c_1 = 1$.) 
\end{prop}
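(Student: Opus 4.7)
The plan is to prove Philip Hall's theorem by working inside the incidence algebra $\sI(\widehat P)$ and expanding $\mu = \zeta^{-1}$ as a geometric series. First, I would introduce the ``strict zeta'' function $\eta \in \sI(\widehat P)$ defined by $\eta(x,y) = 1$ if $x < y$ and $\eta(x,y) = 0$ otherwise, so that $\zeta = \delta + \eta$, where $\delta$ is the Kronecker identity of the incidence algebra. The key combinatorial identity to establish by induction on $k$ is that, for any $k \ge 1$,
\begin{equation*}
\eta^{\ast k}(x,y) = \#\{\text{strict chains } x = t_0 < t_1 < \cdots < t_k = y\},
\end{equation*}
where $\ast$ denotes convolution in $\sI(\widehat P)$. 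The inductive step follows directly from the definition $\eta^{\ast k}(x,y) = \sum_{z} \eta^{\ast(k-1)}(x,z)\,\eta(z,y)$, which splits a chain of length $k$ from $x$ to $y$ at its penultimate element $z = t_{k-1} < y$. In particular $\eta^{\ast k}(\hat{0}, \hat{1}) = c_k$.

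Next I would exploit the finiteness of $\widehat P$. If $N$ denotes the length of the longest chain in $\widehat P$, then $\eta^{\ast k}(x,y) = 0$ for all $k > N$ and all $x,y$, so $\eta$ is nilpotent in $\sI(\widehat P)$. Consequently the formal identity
\begin{equation*}
(\delta + \eta)\ast\bigl(\delta - \eta + \eta^{\ast 2} - \eta^{\ast 3} + \cdots\bigr) = \delta
\end{equation*}
is a genuine (finite) equality in $\sI(\widehat P)$, so by uniqueness of the convolution inverse of $\zeta$ we obtain
\begin{equation*}
\mu_{\widehat P} \;=\; \zeta^{-1} \;=\; \delta - \eta + \eta^{\ast 2} - \eta^{\ast 3} + \cdots.
\end{equation*}

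Finally, evaluating both sides at the pair $(\hat{0}, \hat{1})$ and substituting the chain-counting interpretation of $\eta^{\ast k}$ yields the desired identity $\mu_{\widehat P}(\hat{0},\hat{1}) = c_0 - c_1 + c_2 - c_3 + \cdots$, with the boundary convention $c_0 = \delta(\hat{0},\hat{1})$ accounting for the case $\hat{0} = \hat{1}$. The main (and really the only) obstacle is making the combinatorial reading of convolution powers of $\eta$ rigorous; once that is in place, the geometric series manipulation is automatic because it terminates in finitely many terms.
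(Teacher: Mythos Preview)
Your argument is correct and is essentially the standard proof of Philip Hall's theorem: write $\zeta = \delta + \eta$, invert via the terminating geometric series (nilpotence of $\eta$), and read off $\eta^{\ast k}(\hat 0,\hat 1) = c_k$ by induction. There is nothing to compare against, however, because the paper does not prove this proposition; it is quoted as a classical result, with references to Rota \cite[Sec.~3, Prop.~6]{Rota:64} and Stanley \cite[Prop.~3.8.5]{Stanley:12}, and then applied to bound $|\muAD(d,n)|$ by the total chain count. Your proof is in fact the one given in Stanley's text.
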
 
We may rewrite \eqref{eq:PH} as a sum over all chains,
\begin{equation}\label{eq:PH2}
\mu_{\widehat{P}}(\hat{0}, \hat{1})  = \sum_{ C \in \sC_{\widehat{P}}(\hat{0}, \hat{1}) } (-1)^{\ell(C)} .
\end{equation} 

We now establish an upper bound on M\"{o}bius function absolute values, valid for all $(d,n)$,
stated as Theorem \ref{thm:intro-mobius-bound}. We recall its statement for convenience.  

\begin{thm}
\label{thm:intro-mobius-bound2} 
Let $\muAD: \NNplus \times \NNplus \to \ZZ$ 
denote the M\"{o}bius function of the  floor quotient order. 
Let $\alpha_0 \approx 1.729$ denote the unique positive real given by $\zeta(\alpha_0)=2$,
where $\zeta(s)$ is the Riemann zeta function.
Then 
the following upper bound holds:
\begin{equation}
\label{eq:intro-mobius-bound2}
  |\muAD(d,n)| \leq  \left(\frac{n}{d}\right)^{\alpha_0} 
  \qquad \text{for all } d,n\in \NNplus .
\end{equation}
\end{thm}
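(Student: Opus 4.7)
The plan is to combine Philip Hall's theorem (Proposition \ref{prop:PH}) with the chain-counting bound already established in Theorem \ref{thm:chain-upper-bound}. Philip Hall's theorem expresses the M\"obius value as an alternating sum
\[
\muAD(d,n) = \sum_{C \in \sC_{\ADset}(d,n)} (-1)^{\ell(C)}
\]
over all chains from $d$ to $n$ in the floor quotient poset. Applying the triangle inequality to this sum immediately yields
\[
|\muAD(d,n)| \le \sum_{C \in \sC_{\ADset}(d,n)} 1 = \TC(d,n),
\]
where $\TC(d,n)$ is the total chain count introduced in Section \ref{subsec:count-chains}.

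Then Theorem \ref{thm:chain-upper-bound} supplies exactly the required estimate $\TC(d,n) \le (n/d)^{\alpha_0}$, which gives the bound \eqref{eq:intro-mobius-bound2} without further work. Since both ingredients are already in hand, the proof reduces to a two-line argument and there is no real obstacle.

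The only subtlety worth flagging is conceptual rather than technical: the exponent $\alpha_0$ arises from the recursion bounding $\TC(d,n)$ by $\sum_{k \ge 2} \TC(d, \floor{n/k})$, which forces the constraint $\sum_{k \ge 2} k^{-\alpha_0} = 1$, i.e. $\zeta(\alpha_0) = 2$. Thus the polynomial exponent in the M\"obius bound is inherited directly from the chain-counting recursion, and any sharpening of \eqref{eq:intro-mobius-bound2} via this method would require exploiting cancellation between chains of even and odd length in Hall's formula, which the triangle inequality discards.
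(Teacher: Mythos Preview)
Your proposal is correct and matches the paper's own proof essentially line for line: apply Philip Hall's theorem to bound $|\muAD(d,n)|$ by the total chain count $\TC(d,n)$, then invoke Theorem \ref{thm:chain-upper-bound}. Your added commentary on the origin of $\alpha_0$ and the loss from the triangle inequality is also consistent with the paper's remark following the proof.
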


\begin{proof}
We are to show
\[
  |\muAD(d, n)| \le \left( \frac{n}{d} \right)^{\alpha_0},
\]
where $\zeta(\alpha_0)=2$.
We may assume $d \AD n$, otherwise the bound trivially holds.

 Philip Hall's theorem implies the  absolute value bound
\begin{equation}
\label{eq:absolute-chain-bound} 
| \mu_{\widehat{P}}(\hat{0}, \hat{1})| \le c_0+c_1 + c_2 + \cdots = \TC_{\widehat{P}}( \hat{0}, \hat{1}),
\end{equation} 
for any poset interval $\widehat P$.
If $d \AD n$ we may choose the interval $\widehat{P} = \ADset[d, n]$ with $\hat{0}= d$ and $\hat{1}=n$ to obtain,
using \eqref{eq:absolute-chain-bound}, 
$$ | \muAD(d, n)| \le \TC(d, n).$$
 Theorem \ref{thm:chain-upper-bound} gives
$\TC(d,n) \le ({n}/{d})^{\alpha_0}$, completing the proof. 
\end{proof} 

\begin{rmk}
The bound of Theorem \ref{thm:intro-mobius-bound} makes no use of  sign cancellations of the terms
in the M\"{o}bius function. 
Numerical calculations in Section \ref{subsec:mobius-data} suggest polynomial growth of M\"{o}bius values 
for some value $n^{\beta_0}$. 
\end{rmk}

%
%

\subsection{Sign changes for the M\"{o}bius function}\label{subsec:64}

The following result addresses sign changes in the  M\"{o}bius values $\muAD(1,n)$  
as $n \geq 1$ varies, and shows they occur.

%
%
\begin{thm}
\label{thm:mobius-sign-change} 
There exists an infinite sequence of integers $\ell_1=2< \ell_2< \ell_3 < \cdots$ 
such that 
\begin{equation*}
(-1)^j \mu(1, \ell_j) > 0 \qquad\text{for each }j,
\end{equation*}
with $\ell_j$ satisfying the bounds 
\[
\ell_{j+1} \le 2\ell_j^2 - 2.
\]
Consequently the  floor quotient M\"{o}bius function $\muAD(n)= \muAD(1,n)$ has  infinitely many sign changes.
\end{thm}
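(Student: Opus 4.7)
The plan is to construct the sequence inductively. For the base case, $\ADset[1,2]=\{1,2\}$ gives $\muAD(1,2)=-1$, so $\ell_1=2$ satisfies $(-1)^1\muAD(1,\ell_1)>0$. The inductive step, given $\ell_j$ with $(-1)^j\muAD(1,\ell_j)>0$, will locate $\ell_{j+1}$ in the window $(\ell_j,2\ell_j^2-2]$ with $(-1)^{j+1}\muAD(1,\ell_{j+1})>0$.

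The key tool (Proposition~\ref{prop:mu-recursion0}) is the identity
\[
\sum_{d \,\in\, \ADset[1,2n]\,\smallsetminus\,\ADset[1,n]} \muAD(1,d) \;=\; 0
\qquad(n\ge 2),
\]
which follows from subtracting $\sum_{d\AD n}\muAD(1,d)=0$ from $\sum_{d\AD 2n}\muAD(1,d)=0$, together with the containment $\ADset[1,n]\subseteq\ADset[1,2n]$ arising from the cutting identity $\floor{n/k}=\floor{2n/(2k)}$. I will apply this at $n=\ell_j^2-1$, so that $2n=2\ell_j^2-2$ matches the target window. Two properties of the summation set then need verification: (a)~every $d\in\ADset[1,2n]\smallsetminus\ADset[1,n]$ lies in $[\ell_j,2\ell_j^2-2]$, and (b)~$\ell_j$ itself is one of these $d$. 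For (a): if $d\AD 2n$ with $d>n$, then $\floor{2n/k}>n$ forces $k=1$ and hence $d=2n$; any $d\le\ell_j-1\le\sqrt{n}$ instead lies in $\ADsetsmall(n)\subseteq\ADset[1,n]$ by Proposition~\ref{prop:interval-complement}. For (b): Lemma~\ref{lem:multiples-elem}(a) gives $\ell_j\AD 2\ell_j^2-2$ since $\ell_j^2\le 2\ell_j^2-2$, while the division $\ell_j^2-1=\ell_j(\ell_j-1)+(\ell_j-1)$ together with Lemma~\ref{lem:multiples-elem}(b) shows $\ell_j\notAD\ell_j^2-1$.

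With these ingredients in hand, argue by contradiction: if no $m\in(\ell_j,2\ell_j^2-2]$ satisfied $(-1)^{j+1}\muAD(1,m)>0$, then $(-1)^j\muAD(1,m)\ge 0$ for every such $m$; combined with $(-1)^j\muAD(1,\ell_j)>0$, multiplying the displayed identity by $(-1)^j$ makes every summand nonnegative with the $d=\ell_j$ summand strictly positive, contradicting the vanishing of the sum. Taking the smallest admissible $m$ as $\ell_{j+1}$ closes the induction, and the infinitely many sign changes of $\muAD(1,n)$ follow immediately from the alternation. The main obstacle is the calibration of $n$: the value $n=\ell_j^2-1$ is the largest integer below $\ell_j^2$ for which $\ell_j\notAD n$ (Lemma~\ref{lem:multiples-elem}(b) requires $r\ge k$ in $n=\ell_j k+r$, which forces $r=k=\ell_j-1$), and this is precisely what yields the quadratic bound $2\ell_j^2-2$; any larger $n$ would place $\ell_j$ inside $\ADset[1,n]$ and eliminate the forced-sign term, while any smaller $n$ gives a weaker bound.
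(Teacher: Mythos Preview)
Your proof is correct and follows essentially the same approach as the paper: both apply the cancellation identity of Proposition~\ref{prop:mu-recursion0} at $n=\ell_j^2-1$ (the paper via Proposition~\ref{prop:mu-recursion1}(2)), verify that $\ell_j$ lies in $\ADset[1,2n]\smallsetminus\ADset[1,n]$ while all other elements of this set lie in $(\ell_j,2\ell_j^2-2]$, and conclude by a sign argument that some term must oppose the sign of $\muAD(1,\ell_j)$. Your final commentary about smaller $n$ giving a ``weaker bound'' is slightly off (a smaller admissible $n$ would in fact tighten the bound, though one must re-verify the side conditions), but this does not affect the proof itself.
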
 

To prove this result we use recursions for the M\"{o}bius function 
given in the following two propositions.
Recall that if $d < n$, the basic recursion \eqref{eqn:mu-recursion0} gives
\begin{equation*}
\muAD(d,n) = - \sum_{\substack{\ell \in \ADset[d,n] \\ \ell < n}} \muAD(d, \ell).
\end{equation*}

%
%
\begin{prop}
\label{prop:mu-recursion0} 
If $d \AD n$ and $d < n$, then we have
\begin{equation}\label{eq:mu-cancel-recursion0}
\muAD(d,2n) = - \sum_{\substack{\ell \in \ADset[d,2n] \\ \ell \not\in \ADset[d, n] \\ \ell < 2n}} \muAD(d, \ell) .
\end{equation}
\end{prop}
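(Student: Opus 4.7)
The plan is to reduce the claim to the basic two-variable M\"obius recursion \eqref{eqn:mu-recursion0} applied at $(d, 2n)$, and then exhibit a cancellation using the same recursion applied at the smaller endpoint $(d, n)$. The key combinatorial fact I would establish first is the interval inclusion
\[
\ADset[d, n] \;\subseteq\; \ADset[d, 2n].
\]
This holds because $n = \floorfrac{2n}{2}$ certifies $n \AD 2n$, so the transitivity of the floor quotient partial order (proved in Theorem~\ref{thm:approx-order2}) gives $\ell \AD 2n$ for every $\ell \AD n$. Moreover every $\ell \in \ADset[d, n]$ satisfies $\ell \le n < 2n$, so the strict inequality $\ell < 2n$ holds automatically on this sub-interval.

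Next I would apply the basic M\"obius recursion to $\muAD(d, 2n)$ and split the summation set $\{\ell \in \ADset[d, 2n] : \ell < 2n\}$ into the two disjoint pieces $\ADset[d, n]$ and $\ADset[d, 2n] \setminus \ADset[d, n]$ (both intersected with $\{\ell < 2n\}$, but by the previous paragraph this intersection is trivial on $\ADset[d, n]$). This yields
\[
\muAD(d, 2n) \;=\; -\sum_{\ell \in \ADset[d, n]} \muAD(d, \ell) \;-\; \sum_{\substack{\ell \in \ADset[d, 2n] \\ \ell \notin \ADset[d, n] \\ \ell < 2n}} \muAD(d, \ell).
\]

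Finally, the cancellation: since $d \AD n$ and $d < n$, the defining M\"obius recursion \eqref{eqn:Mob1} applied to the pair $(d, n)$ gives
\[
\sum_{\ell \in \ADset[d, n]} \muAD(d, \ell) \;=\; 0.
\]
Substituting this identity into the displayed equation above yields \eqref{eq:mu-cancel-recursion0} at once.

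I do not anticipate any real obstacle here; the proof is essentially an accounting argument that relies on two ingredients: the universal relation $n \AD 2n$, which makes $\ADset[d,n]$ sit inside $\ADset[d,2n]$ as a closed sub-interval, and the vanishing of $\sum_{\ell \in \ADset[d,n]} \muAD(d,\ell)$ that is built into the definition of $\muAD$. The one point worth emphasizing is that this cancellation is specific to the doubling $n \mapsto 2n$: for a generic multiple $an$ with $a \ge 3$, the integer $n$ need not be a floor quotient of $an$, so the analogous interval inclusion would fail and there would be no clean recursion of this form.
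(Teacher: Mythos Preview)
Your proof is correct and essentially identical to the paper's: both establish the inclusion $\ADset[d,n]\subseteq\ADset[d,2n]$, split the basic M\"obius recursion for $\muAD(d,2n)$ accordingly, and cancel the sub-sum over $\ADset[d,n]$ using the defining recursion at $(d,n)$.

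One correction to your closing commentary: the cancellation is \emph{not} specific to doubling. For any integer $a\ge 2$ one has $n=\floorfrac{an}{a}$, so $n\AD an$ always holds, and hence $\ADset[d,n]\subseteq\ADset[d,an]$ whenever $d\AD n$. The identical argument therefore gives $\muAD(d,an)=-\sum_{\ell\in\ADset[d,an]\setminus\ADset[d,n],\,\ell<an}\muAD(d,\ell)$ for every $a\ge 2$. The paper singles out $a=2$ only because that is what is needed downstream for the sign-change argument.
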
 

\begin{proof}
Since $d \AD n$ we have $\ADset[d, n] \subset \ADset[d,2n]$.
In the recursion for the M\"{o}bius value $\muAD(d,2n)$ we may separate the terms $\muAD(d,m)$ 
depending on whether or not $m \in \ADset[d,n]$: 
\begin{align*}
\muAD(d,2n) &= - \sum_{\substack{m < 2n \\ m \in \ADset[d,2n]}} \muAD(d,m) \\
&= - \sum_{\substack{m < 2n \\ m \in \ADset[d,2n] \smallsetminus \ADset[d,n]}}
 - \sum_{\substack{m \in \ADset[d,n]}} \muAD(d,m) \\
&= - \sum_{\substack{m < 2n \\ m \in \ADset[d,2n] \smallsetminus \ADset[d,n]} }
 \muAD(d,m) .
\end{align*}
In the middle line, the second summation sums to zero due to the defining M\"{o}bius recursion applied to the interval $\ADset[d,n]$;
this interval is nontrivial since $d \AD n$ and $d < n$.
\end{proof} 

We specialize Proposition \ref{prop:mu-recursion0} to initial intervals $\ADset[1,2n]$.
When $n = t^2-1$, the sum in \eqref{eq:mu-cancel-recursion0} then has the useful property
that all its nonzero terms have indices inside the interval $\ell \in [t, 2t^2 - 2]$.

%
%
\begin{prop}
\label{prop:mu-recursion1} 
 Let  $\muAD(n) \coloneqq\muAD(1,n)$.
 Then 
  the following hold.
\begin{enumerate}
\item[(1)]
For $n\geq 2$ we have
\begin{equation}
\label{eq:mu-cancel-recursion01}
\muAD(1, 2n) = - \sum_{\substack{\sqrt{n}<m < 2n \\ m \AD 2n \\ m \notAD n}} \muAD(1, m) .
\end{equation}

\item[(2)]
For $t \ge 2$ we have
\begin{equation}
\label{eq:mu-cancel-recursion1}
\muAD(1, 2t^2-2) = - \muAD(1, t) - \sum_{\substack{t < m < 2t^2-2 \\ m \AD 2t^2-2 \\  m \notAD t^2-1}}  \muAD(1, m).
\end{equation} 
\end{enumerate}
\end{prop}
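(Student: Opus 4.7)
The plan is to derive both parts as direct specializations of Proposition \ref{prop:mu-recursion0}. For part (1), I would take $d = 1$ in that proposition; since $1 \AD n$ and $1 < n$ for $n \ge 2$, it gives
\[
\muAD(1, 2n) = - \sum_{\substack{m < 2n \\ m \AD 2n \\ m \notAD n}} \muAD(1, m).
\]
The only work left is to check that appending the lower range bound $\sqrt{n} < m$ does not drop any summand. This is immediate from Proposition \ref{prop:interval-complement}(b), which says $\ADsetsmall(n) = \{1, 2, \ldots, \floor{\sqrt{n}}\}$: every positive integer $m \le \sqrt{n}$ is automatically a floor quotient of $n$. Contrapositively, $m \notAD n$ forces $m > \sqrt{n}$, so the constraint is vacuous and part (1) follows.

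For part (2), I would substitute $n = t^2 - 1$ into part (1) (valid for $t \ge 2$, as then $n \ge 3$) and isolate the contribution of $m = t$. Since $t - 1 < \sqrt{t^2 - 1} < t$ for $t \ge 2$, the smallest integer $m$ with $m > \sqrt{t^2 - 1}$ is $m = t$, so I must verify that this $m$ actually belongs to the summation set. First, $t \AD 2t^2 - 2$: writing $2t^2 - 2 = (2t-1)\,t + (t-2)$, the remainder $t - 2$ satisfies $0 \le t - 2 < t = \min(t, 2t-1)$ for $t \ge 2$, so by the strong remainder property (Theorem~\ref{thm:equiv-properties}(4)), $t = \floor{(2t^2-2)/(2t-1)}$. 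Second, $t \notAD t^2 - 1$: one checks $\floor{(t^2-1)/(t-1)} = t + 1$ and $\floor{(t^2-1)/t} = t - 1$, so the tipping-point property (Theorem~\ref{thm:equiv-properties}(5)) fails at $d = t$, and no cutting length $k$ produces $\floor{(t^2 - 1)/k} = t$. With $m = t$ confirmed to lie in the summation set, extracting this single term from the sum of part (1) yields the claimed formula for part (2).

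I expect no genuine obstacle in carrying this out; the proof is bookkeeping once one recognizes that the refinement from Proposition \ref{prop:mu-recursion0} to part (1) is just the observation that small integers are always floor quotients, and that part (2) is obtained by peeling off the single boundary term $m = t$, whose membership in the summation set is verified by the explicit remainder and tipping-point calculations above.
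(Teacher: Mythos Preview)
Your proposal is correct and follows essentially the same approach as the paper: part (1) is Proposition~\ref{prop:mu-recursion0} at $d=1$ together with the observation (from Proposition~\ref{prop:interval-complement}) that $m \notAD n$ forces $m > \sqrt{n}$, and part (2) specializes to $n = t^2-1$ and peels off the $m=t$ term after checking $t \AD 2t^2-2$ and $t \notAD t^2-1$. Your argument is in fact more detailed than the paper's, which asserts these two floor-quotient facts without calculation; one small quibble is that the computation you give for $t \notAD t^2-1$ (evaluating $\floor{(t^2-1)/(t-1)}$ and $\floor{(t^2-1)/t}$) is not literally the tipping-point inequality at $d=t$, but it still correctly shows the cutting-length set $\cutset{t}{t^2-1}$ is empty.
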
 

\begin{proof} 
(1) This is Proposition~\ref{prop:mu-recursion0} with $d = 1$, with the observation that $m \notAD n$ implies
 $m > \sqrt{n}$
by Proposition~\ref{prop:interval-complement} (2).

(2) Apply (1) with $n = t^2-1$.
We observe that 
$t \AD 2t^2-2$ and 
$t \notAD t^2-1$,
so the term $\muAD(t,t)$ appears in the right-hand sum of \eqref{eq:mu-cancel-recursion01}.
Moreover all $m$ in the rightmost sum satisfy $m \geq t$, because 
of the bound $m > \sqrt{t^2 - 1}$ restricting the summation.
\end{proof}

The following  example illustrates one case of the  recursion \eqref{eq:mu-cancel-recursion1}.
\begin{exa}\label{exa:65a} 
 We abbreviate $\muAD(n)\coloneqq \muAD(1,n)$. 
 For $t=5$ we have $2t^2 - 2 = 48$. 
 The M\"{o}bius recurrence \eqref{eqn:Mob1} becomes
 \[
 \muAD(48) = - \muAD(24) -\muAD(16) - \muAD(12)- \muAD(9) - \muAD(8) - \sum_{d=1}^{6} \muAD(d).
 \]
 For $t^2 - 1 = 24$, the recurrence becomes
 \[
 -\muAD(24) = \muAD(12)+ \muAD(8) + \muAD(6) + \sum_{d=1}^4 \muAD(d). 
 \]
 Substituting this formula in the one above yields 
  $$
 \muAD(48) = - \muAD(16) - \muAD(9) - \muAD(5)
 \qquad\text{or}\qquad
 \muAD(5)= - \muAD(48) - \muAD(16) - \muAD(9) .
 $$
\end{exa} 

%
%
\begin{proof}[Proof of Theorem~\ref{thm:mobius-sign-change}]
We can write equation~\eqref{eq:mu-cancel-recursion1} in Proposition \ref{prop:mu-recursion1} as
\begin{equation}
\label{eq:mu-cancel-recursion2}
\muAD(1, t)  = - \sum_{\substack{t < m \leq 2t^2-2 \\ m \AD 2t^2-2 \\ m \notAD t^2-1}}  \muAD(1, m),
\end{equation} 
an  essential point being that all terms on the right-hand side are $\muAD(1,\cdot)$-values on inputs 
between $t+1$ and $2t^2-2$. 

To prove the theorem we
proceed by induction on the index $j$ in $\ell_j$.
For the base case, let $\ell_1 = 2$ so that  $\muAD(1,\ell_1) = \muAD(1,2) = -1$.
For the induction step, suppose $\ell_j$ is given with $(-1)^j \muAD(1,\ell_j) > 0$; 
we must then find some $\ell_{j+1}$ satisfying
\[ 
	\ell_j < \ell_{j+1} \leq 2\ell_j^2 - 2
	\qquad\text{and}\qquad
	\muAD(1, \ell_j) \muAD(1, \ell_{j+1}) < 0.
\]
Take \eqref{eq:mu-cancel-recursion2} with $t = \ell_j$ and multiply both sides by $\muAD(1, \ell_j)$,
\[
\muAD(1, \ell_j)^2  = - \sum_{\substack{t < m \leq 2\ell_j^2-2 \\ m \AD 2\ell_j^2-2 \\ m \notAD \ell_j^2-1}}  \muAD(1,\ell_j) \muAD(1, m).
\]
The left-hand side is (strictly) positive by assumption that $\muAD(1,\ell_j) \neq 0$, so there must be some term in the right-hand side sum which is (strictly) negative. 
Let $\ell_{j+1}$ be the index $m$ of any such term.
This proves the induction hypothesis.
\end{proof}

\begin{rmk}
\label{rmk:constant-sign}
Numerical data given in Section \ref{subsec:mobius-data}
exhibits $\muAD(1,n)$  having a  constant sign over very long intervals.
Perhaps the length of these intervals can grow like 
  $n^c$ for some $c>0$.
\end{rmk}

%
%

\subsection{Differenced M\"{o}bius function}
\label{subsec:diff-mobius} 

 The intervals  $\ADset[1,n]$
 and $\ADset[1, n-1]$ in the floor quotient order have many common elements  and their difference
 set is small; this property suggests study of the differenced 
 floor quotient M\"{o}bius function, fixing  $d=1$.

  In what follows we use the abbreviation
 $$\mu_1(n) : = \mu_{1}(1, n) .$$ 
 We show that $\mu_1(n)$  satisfies linear recursions involving 
 its first forward-difference.

\begin{defi} \label{defi:62} 
The {\em differenced floor quotient M\"{o}bius function} for $n\geq 1$ is
\begin{equation} 
\Delta \mu_{1}(n) \coloneqq \muAD( n) - \muAD(n-1),
\end{equation}
where $\muAD(n) \coloneqq \muAD(1, n)$. 
We adopt the convention that $\muAD(0) \coloneqq 0$.
\end{defi}

The difference operator $\Delta$ gives the basic recursion
\begin{equation}\label{eq:mu-recursion} 
\muAD(n) =  \sum_{m=1}^n \Delta \mu_{1} (m). 
\end{equation}

The next result shows that $\Delta\muAD(n)$ satisfies an interesting recursion in terms of
a subset of the proper  divisors of $n$. 
At special values
$n= s^2$ and $n=s(s+1)$ 
the recursion involves undifferenced values $\muAD(s)$.

\begin{thm}\label{thm:mobius-recursion}
For $n\geq 3$, the differenced M\"{o}bius function satisfies the recursion 
\begin{equation} \label{eqn:diff-recursion0} 
\Delta \muAD(n) = \begin{cases}
\displaystyle
- \sum_{\substack{ d \divides n\\  \sqrt{n} < d < {n} }} \Delta \muAD(d) & \text{if }n \ne s^2\,\, \mbox{or}\,\, s(s+1),\\[3em]
\displaystyle
- \Bigg( \sum_{\substack{d \divides n \\ \sqrt{n} < d < n }} \Delta \muAD(d) \Bigg)  - \muAD(s) & \text{if }n = s^2 \text{ or } s(s+1) .
 \end{cases}
\end{equation}
Note that $s = \floor{\sqrt{n}}$ when $n=s^2$ or $s(s+1)$.
\end{thm}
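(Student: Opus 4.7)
The plan is to deduce the recursion from the defining two-variable M\"{o}bius identity applied to two consecutive initial intervals. By definition,
\[
\sum_{e \in \ADset[1,m]} \muAD(1,e) \;=\; 0 \qquad \text{for all } m \geq 2.
\]
Applying this to $m = n$ and $m = n-1$ (the latter valid because $n \geq 3$) and subtracting yields
\[
\sum_{e \in \ADset[1,n] \setminus \ADset[1,n-1]} \muAD(1,e) \;=\; \sum_{e \in \ADset[1,n-1] \setminus \ADset[1,n]} \muAD(1,e),
\]
so the entire task reduces to identifying the two symmetric difference sets on either side.

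Second, I would invoke Lemma~\ref{lem:consecutive-interval-sets} for this identification. It shows that the elements newly entering $\ADset[1,n]$ are precisely the members of $\sD^+(n) = \{d : d \divides n, \; d > \sqrt{n}\}$, together with an additional element $\{s\}$ in the exceptional cases $n = s^2$ or $n = s(s+1)$, while the elements disappearing from $\ADset[1,n-1]$ are exactly $\{d - 1 : d \in \sD^+(n)\}$. Substituting these descriptions and pairing terms collapses the difference into
\[
\sum_{d \in \sD^+(n)} \bigl( \muAD(1,d) - \muAD(1, d-1) \bigr) \;+\; \varepsilon_n \, \muAD(1, s) \;=\; 0,
\]
where $\varepsilon_n = 1$ when $n = s^2$ or $s(s+1)$ and $\varepsilon_n = 0$ otherwise. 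Recognizing the parenthesized expressions as the differences $\Delta\muAD(d)$ and observing that $n \in \sD^+(n)$ for $n \geq 3$, I would then isolate the $d = n$ summand to recover precisely the claimed recursion.

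The one point that warrants care is the special case $n = s(s+1)$. Here $s + 1 \in \sD^+(n)$, so $s = (s+1) - 1$ appears in the removed set $\{d-1 : d \in \sD^+(n)\}$, yet Lemma~\ref{lem:consecutive-interval-sets} also restores $\{s\}$ via its explicit $\cup\,\{s\}$ clause. I would verify by direct bookkeeping that the $\muAD(1, s)$ contribution nevertheless enters the collapsed identity with coefficient $+1$, matching the $n = s^2$ case (where $s \notin \sD^+(n)$ and $\{s\}$ is genuinely new). Aside from this sign check and a routine verification that the added and removed sets in the generic case are disjoint (so that no terms are double-counted, using that two consecutive integers cannot both divide $n$ and both exceed $\sqrt{n}$), the argument is entirely mechanical once Lemma~\ref{lem:consecutive-interval-sets} is in hand.
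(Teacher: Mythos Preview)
Your approach is correct and is essentially the paper's own argument, just packaged differently. Both proofs difference the defining M\"{o}bius identity $\sum_{e \AD m}\muAD(1,e)=0$ at $m=n$ and $m=n-1$ and then track which elements of the initial interval change. You invoke Lemma~\ref{lem:consecutive-interval-sets} to read off the added and removed sets directly, whereas the paper starts from the $\ADsetsmall/\ADsetlarge$ decomposition of Proposition~\ref{prop:interval-complement}, writes $\muAD(n)$ explicitly as in \eqref{eqn:mu-recursion1}, and then differences term by term using $\floor{n/k}-\floor{(n-1)/k}=[k\mid n]$; this amounts to re-deriving the content of Lemma~\ref{lem:consecutive-interval-sets} inline. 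One small caveat: the disjointness you actually need for the collapsed identity is that the added set $\sD^{+}(n)$ is disjoint from $\ADset[1,n-1]$ and that the removed set $\{d-1:d\in\sD^{+}(n)\}$ is contained in $\ADset[1,n-1]$; your stated check about consecutive divisors exceeding $\sqrt{n}$ handles $A\cap R=\emptyset$ but not these, though both follow immediately from the proof of Lemma~\ref{lem:consecutive-interval-sets}.
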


\begin{proof}[Proof of Theorem~\ref{thm:mobius-recursion}] 
We split the recursion \eqref{eqn:mu-recursion0} into two parts as
 \begin{align} \label{eqn:mu-recursion}
 \muAD(n)  &=   - \sum_{\substack{d \in \ADsetlarge(n) \\ d \neq n}} \muAD(d)
  - \sum_{\substack{d \in \ADsetsmall(n) \\ d \not\in \ADsetlarge}} \muAD(d),
\end{align}
using the abbreviated notation $\muAD(n) \coloneqq \muAD(1,n)$.

Set $s = \floor{ \sqrt{n} }$, so $s^2 \le n < (s+1)^2$.
 Using Proposition~\ref{prop:interval-complement} the  equation \eqref{eqn:mu-recursion} becomes
 \begin{align} \label{eqn:mu-recursion1}
 \muAD(n)  &=  \begin{cases}
 \displaystyle
  - \sum_{\substack{2\le k \le s}} \muAD(\floor{ \frac{n}{k} })
  - \sum_{\substack{1\leq d \leq s-1}} \muAD(d)
  &\text{if } s^2 \leq n < s(s+1), \\[2em]
  \displaystyle
  - \sum_{\substack{2\le k \le s}} \muAD(\floor{ \frac{n}{k} })
  - \sum_{\substack{1\leq d \leq s}} \muAD(d)
  &\text{if } s(s+1) \leq n < (s+1)^2.
  \end{cases}
\end{align}

We will difference the  equations above  at $n$ and $n-1$. 
We have three cases:   the generic case where $n \ne s^2, s(s+1)$  
and  two exceptional cases $n = s^2$ and $n = s(s+1)$.
 Lemma~\ref{lem:consecutive-interval-sets} describes  the overlap of the intervals $\ADset[1,n]$ and $\ADset[1,n-1]$,
 as sets.

For the generic case 
we obtain,  from  \eqref{eqn:mu-recursion1},
that 
\begin{align}\label{eqn:mu-recursion-3a} 
\Delta \muAD(n) &= - \sum_{2 \le k \le s} \left( \muAD(\floorfrac{n}{k} )
 - \muAD(\floorfrac{n-1}{k} )\right) 
\end{align}
because  the second sums over $d$ in \eqref{eqn:mu-recursion1} cancel for $n$ and $n-1$.
To simplify the summand, we have
\[
\floorfrac{n}{k} - \floorfrac{n-1}{k} = \begin{cases}
1 &\text{if } k \divides n, \\
0 &\text{otherwise},
\end{cases}
\]
so \eqref{eqn:mu-recursion-3a} becomes
\begin{align}\label{eqn:mu-recursion-4}
\Delta \muAD(n) 
 &= - \sum_{\substack{k\divides n \\2\le k \le s}} \Delta \muAD(\floorfrac{n}{k} )
 = - \sum_{\substack{k\divides n \\2\le k \le s}} \Delta \muAD\left( \frac{n}{k} \right).
\end{align} 
The sum in \eqref{eqn:mu-recursion-4} 
agrees with  the right side of \eqref{eqn:diff-recursion0}, after substituting $d = n/k$.

We next treat the case $n= s^2$. 
In this case $\floor{ \sqrt{n-1}}= s-1$.
Comparing $\muAD(n)$ and $\muAD(n-1)$ in \eqref{eqn:mu-recursion1} the second sums over $d$ cancel, but the first sum over $k$
for $\muAD(n-1)$ has one fewer term, hence the term
$\muAD(\floor{ {n}/{s} }) = \muAD(s)$ for $n$ remains uncancelled. 
We obtain 
\begin{align*}
\Delta \muAD( n) 
&= - \Bigg( \sum_{2\le k \le s-1} \muAD(\floorfrac{n}{k} ) - \muAD(\floorfrac{n-1}{k} )\Bigg)  - \muAD(s) \\
&= - \Bigg( \sum_{\substack{k\divides n \\2\le k \le s-1}}\Delta  \muAD(\floorfrac{n}{k} ) \Bigg) - \muAD(s)
= - \Bigg( \sum_{\substack{d \divides n \\ \sqrt{n} < d < n}} \Delta\muAD(d) \Bigg) - \muAD(s).
\end{align*} 
The latter sum agrees with \eqref{eqn:diff-recursion0}
in the case $n=s^2$. 

Finally we treat the case  $n= s(s+1) $. 
In this case $\sqrt{n-1} = s$ 
but in \eqref{eqn:mu-recursion1} $\muAD(n)$ falls in the lower case while $\muAD(n-1)$ falls in the upper case.
Hence the second sum over $d$ for $\muAD(n-1)$ is one element shorter  than for $\muAD(n)$, 
and we again get
\begin{align}
\Delta \muAD(n) &= - \left( \sum_{2\le k \le s} \muAD(\floorfrac{n}{k} ) - \muAD(\floorfrac{n-1}{k} )\right)  - \muAD( s) \nonumber \\
&= - \Bigg( \sum_{\substack{k\divides n \\2\le k \le s}}\Delta  \muAD(\floorfrac{n}{k} )\Bigg) - \muAD(s)
= - \Bigg( \sum_{\substack{d \divides n \\ \sqrt{n} < d < n}} \Delta\muAD(d) \Bigg) - \muAD(s). \nonumber
\end{align} 
The result agrees with \eqref{eqn:diff-recursion0}
in the case $n=s(s+1)$.
\end{proof}

We present  examples of the recursion in Theorem~\ref{thm:mobius-recursion}. 

 \begin{exa}\label{exa:66} 
The case  $n=15$ is a generic example where  $n \ne s^2$and  $n \ne s(s+1)$. 
We have 
\begin{align*}
\Delta\mu_{1}(15) 
&= \mu_{1}(15) - \mu_{1}(14) \\
 &=  - \left(\mu_{1}(7) + \mu_{1}(5) + \mu_{1}(3) + \mu_{1}(2) + \mu_{1}(1)\right) \\
 & \quad + \left(\mu_{1}(7) + \mu_{1}(4) + \mu_{1}(3) + \mu_{1}(2) + \mu_{1}(1)\right) \\
 &= -\Delta\mu_{1}(5)  .
 \end{align*}
\end{exa} 

\begin{exa}\label{exa:67} 
In the special cases  $n= s^2$ and  $n=s(s+1)$ the recursions contain extra terms involving
 the  {\em undifferenced} floor quotient M\"{o}bius function
An example with $n=s(s+1)$ is  $n=12= 3\cdot 4$, where $s = \floor{ \sqrt{12} }= 3$.
\begin{align*}
\Delta\muAD(12) 
&= \muAD(12) - \muAD(11) \\
 &=  - \left(\muAD(6) + \muAD(4) + \muAD(3) + \muAD(2) + \muAD(1)\right) \\
 & \quad + \left(\muAD(5)  + \muAD(3) \qquad\qquad+ \muAD(2) + \muAD(1)\right) \\
 &= -\Delta\muAD(6) - \Delta\muAD(4) - \muAD(3) .
 \end{align*}
 An example with $n=s^2$ is  $n=16 = 4^2$, where $s=\floor{ \sqrt{n} } = 4$,
\begin{align*}
\Delta\muAD(16) 
&= \muAD(16) - \muAD(15) \\
 &=  - \left(\muAD(8) + \muAD(5) + \muAD(4) + \muAD(3) + \muAD(2) + \muAD(1)\right)  \\
 & \quad + \left(\muAD(7) + \muAD(5) \qquad\qquad + \muAD(3) + \muAD(2) + \muAD(1)\right) \\
 &= -\Delta\muAD(8)\qquad\quad - \muAD(4)  .
 \end{align*}
\end{exa}

%
%

\subsection{Support of differenced M\"{o}bius function}\label{subsec:66} 

The differenced floor quotient M\"{o}bius function vanishes for many $n$.
The following result gives several conditions on $n$ under which $\Delta \mu_1(n)=0$. 
Condition (b) implies  that $\Delta \mu_1(n)=0$ for a positive fraction of integers $n$; 
the set of odd squarefree integers is known to have natural density ${4}/{\pi^2} \approx 0.405$.


\begin{prop}\label{th:65} 
\hfill
\begin{enumerate}[(a)]
\item
If $p\geq 3$ is a prime number, then $\Delta\muAD(p) = 0$.
\item
If $n$ is odd and square-free, then $\Delta\muAD(n) = 0$.
\item
If $n$ has a prime divisor $p$ such that $p \geq \sqrt{n}+1$,
then $\Delta\muAD(n) = 0$.
\end{enumerate}
\end{prop}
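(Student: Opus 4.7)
The plan is to apply the recursion of Theorem~\ref{thm:mobius-recursion} by strong induction on $n$ in each part. The unified strategy: for $n$ satisfying the hypothesis, first rule out the exceptional cases $n = s^2$ and $n = s(s+1)$ so that only the generic case of the recursion applies, giving
\[
  \Delta\muAD(n) \;=\; -\!\!\sum_{\substack{d \divides n\\ \sqrt{n}<d<n}}\Delta\muAD(d);
\]
then show that every divisor $d$ appearing in this sum again satisfies the hypothesis at its own size, so the inductive hypothesis forces each term to vanish.

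For part~(a), a prime $p \ge 3$ has no divisors at all in $(\sqrt{p},p)$, and $p$ cannot equal $s^2$ or the even number $s(s+1)$, so the recursion reads $\Delta\muAD(p) = 0$ from an empty sum. Part~(b) is then a strong induction on odd squarefree $n \ge 3$ with (a) as the base case. An odd squarefree $n \ge 3$ is not $s^2$ (squarefree with $s \ge 2$ is impossible) nor $s(s+1)$ (always even), so only the generic recursion applies. Every divisor $d$ of $n$ with $\sqrt{n} < d < n$ is again odd and squarefree, and exceeds $\sqrt{n} \ge \sqrt{3}$, so $d \ge 3$ and the inductive hypothesis applies.

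For part~(c) the two facts to verify are: (i) $n$ is neither $s^2$ nor $s(s+1)$ under the hypothesis; and (ii) every divisor $d \in (\sqrt{n},n)$ of $n$ inherits the hypothesis. For (i): if $n = s^2$, then $p \divides s^2$ forces the prime $p \divides s$, giving $p \le s = \sqrt{n}$, which contradicts $p \ge \sqrt{n}+1$; if $n = s(s+1)$ then $p$ divides $s$ or $s+1$, giving $p \le s+1$, while the elementary inequality $\sqrt{s(s+1)} > s$ yields $\sqrt{n}+1 > s+1 \ge p$, again a contradiction. For (ii): write $d = n/e$ with $e < \sqrt{n}$; since $p$ is prime and $p > \sqrt{n} > e$ we have $\gcd(p,e) = 1$, so $p \divides n = de$ forces $p \divides d$, and $p \ge \sqrt{n}+1 \ge \sqrt{d}+1$ because $d \le n$.

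The main (mild) obstacle is the case analysis in part~(c) needed to rule out $n = s^2$ and $n = s(s+1)$; both reductions rest on the elementary bounds $s < \sqrt{s(s+1)} < s+1$, together with the observation that a prime divisor of $s^2$ lies in $\{p : p \divides s\}$ and a prime divisor of $s(s+1)$ lies in $\{p : p \le s+1\}$.
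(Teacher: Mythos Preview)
Your proof is correct and follows essentially the same approach as the paper: apply the generic case of the recursion in Theorem~\ref{thm:mobius-recursion} after ruling out the exceptional cases $n=s^2$ and $n=s(s+1)$, then use induction to kill every term in the remaining sum. The paper inducts on the number of prime factors of $n$ rather than on $n$ itself, but this is equivalent here; your treatment of part~(c) is in fact more complete, since you spell out the case analysis showing $n\neq s^2$ and $n\neq s(s+1)$, which the paper simply asserts.
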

\begin{proof}
(a) We use the recursion \eqref{eqn:diff-recursion0} with $n=p$.
 We have $p \ne s^2$ or $ s(s+1)$,
so  using the first equation, the sum over divisors $2\leq d < \sqrt{p}$ is empty, 
so $\Delta\muAD(1,p) = 0$.

(b) 
We apply induction on the number of prime factors of 
$n$.
The base case is established in (a). 
We use the recursion \eqref{eqn:diff-recursion0}.  
Since $n$ is odd and squarefree, we have $n \ne s^2$ or $s(s+1)$. 
In this case all terms in the sum over  $2\leq d < \sqrt{n}$ are 
$\Delta \mu_{1}(\cdot)$ evaluated at  odd squarefree 
numbers having a fewer number of prime factors, 
so all terms are zero by the induction hypothesis.
This completes the induction step.

(c)  The result follows by induction on the number of prime factors of $n$. 
The base case is established by (a), since $p \ge 3$. 
We use the recursion \eqref{eqn:diff-recursion0}. 
 If $p\ge  \sqrt{n}+1$ divides $n$, then $n \ne s^2$ or  $s(s+1)$.
Now each of the  terms $n/d$ in the sum over $2\leq d < \sqrt{n}$ separately  is divisible by $p$,
 has $p> \sqrt{n}{d} +1$, and has fewer prime factors than $n$. By the induction hypothesis
 all such terms are $0$, completing the induction step.
\end{proof}

%
%

\subsection{Numerical computation of $\mu_{1}(1,n)$}\label{subsec:mobius-data}

The results above give two methods to compute $\muAD(1,n)$.
\begin{enumerate}
\item
The values  $\muAD(1,n)$ may be computed recursively from values $\muAD(1, k)$ for
smaller $k$ using the M\"{o}bius recursion \eqref{eqn:mu-recursion0}.
\item
Alternatively one may  compute $\muAD(1,n)$ 
using the recursion \eqref{eqn:diff-recursion0} 
 for the differenced M\"{o}bius function
combined with the  summation recursion \eqref{eq:mu-recursion}. 
\end{enumerate}
 
The sums in method (2) using the differenced recursion in Theorem \ref{thm:mobius-recursion}  have
many fewer terms than those in method (1).

We present computational results below. 
Figure~\ref{fig:mobius-10e4} presents data on $\muAD(1,n)$ for $n$ up to $10,000$;
the graph seems to stay within the bound $O(n^{1/2})$.

\begin{figure}[h]
\includegraphics[scale=0.7]{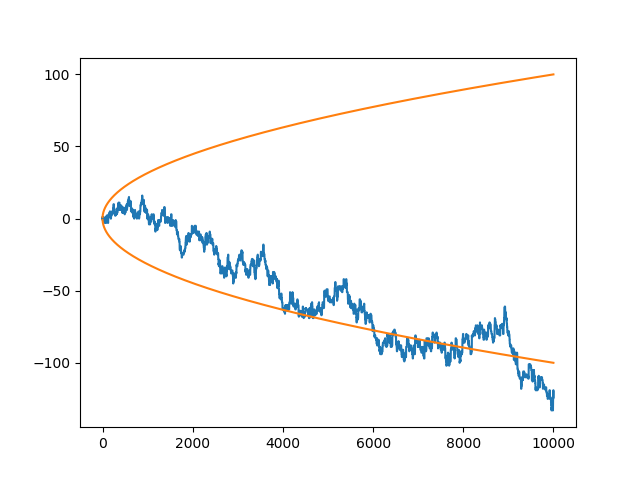}
\caption{M\"{o}bius function $\muAD(1,n)$ up to $n=10,\!000$.
The smooth curve shows $y= \pm \sqrt{x}$. }
\label{fig:mobius-10e4}
\end{figure} 

On a larger scale, Figure \ref{fig:mobius-10e6} graphs  $\muAD(1,n)$ for $n$ up to $1,000,000$.
Compared to Figure \ref{fig:mobius-10e4} the values 
reverse  direction of growth and also grows far outside the square-root bound.
Empirically one estimates $|\muAD(1,n)| = \Omega( n^{0.63})$ on this range. 


\begin{figure}[h]
\includegraphics[scale=0.7]{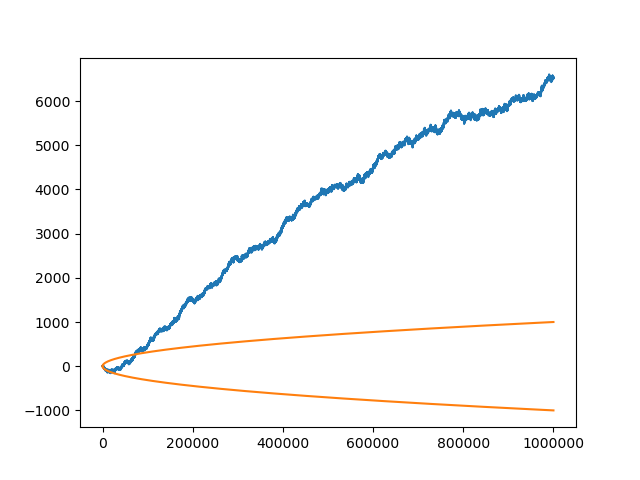}
\caption{Floor quotient M\"{o}bius function $\muAD(1,n)$ up to $n=1,\!000,\!000$. The smooth curve shows
$y=\pm {x}^{1/2}$.}
\label{fig:mobius-10e6}
\end{figure}

%
%
\section{Concluding remarks}\label{sec:concluding} 

We raise four 
directions for further investigation.

\subsection{Behavior of  floor quotient M\"{o}bius function} \label{subsec:81}

 Very  simple questions about the floor quotient M\"{o}bius function remain  unanswered.

(1) Does the function $h(n) \coloneqq \muAD(1,n)$ grow arbitrarily large?
This is suggested by the empirical data.

(2) What are the best asymptotic  growth bounds on the  M\"{o}bius function?
 It seems possible  that 
 $|\muAD(1,n) | \ll n^{1+\epsilon}$ may hold.
 
 The differenced M\"{o}bius function recurrences presented in Section~\ref{subsec:diff-mobius} for initial
 floor quotient intervals give another approach  which may
 lead to improved upper bounds on the growth of $|\muAD(1,n) |$.
  Analysis of this  recurrence 
 may be a challenging problem.


\subsection{Incidence algebras of floor quotient intervals}
\label{subsec:82}

The zeta function and M\"{o}bius function of a partial order are special members of the incidence algebra attached
to the partial order.  
The incidence algebra  of a poset $(\NNplus, \preccurlyeq)$ is the set of $\ZZ$-valued functions on the set of pairs 
$(d,n) \text{ such that } d \preccurlyeq_1 n$,
 see Smith \cite{Smith:67,Smith:69a,Smith:69b}. 
 For general structures of incidence algebras over a ring $R$, 
 which are in general non-commutative associative algebras, 
 see Stanley \cite{Stanley:70} and Spiegel and O'Donnell \cite{SpiegelO:97}.

The floor quotient incidence algebra  $\sI_{n}(\ZZ)= \sI(\ADset[1,n])$ of an initial interval 
 can be realized as a 
(generally noncommutative) ring of integer matrices with rows and columns of size $|\ADset[1,n]| \approx 2\floor{\sqrt{n}}$.
Allowing $\QQ$-coefficients Theorem \ref{thm:intro-interval-incidence} 
implies that the  incidence algebra of $\ADset[1, n]$
has $\dim_{\QQ}( \sI_n (\QQ)) \sim \frac{16}{3} n^{3/4}$, as $n \to \infty$.

Cardinal's algebra  $\mathbf{\sA}_n$, regarded as a $\ZZ$-algebra,  can 
 be identified with  a commutative  $\ZZ$-subalgebra
of the incidence algebra of the initial interval $\ADset[1, n]$ of the floor quotient poset.
Further study of the relations of these two $\ZZ$-algebras may shed light on relations between the zeta function and M\"{o}bius function
of the  floor quotient poset and that of the Cardinal algebra $\mathbf{\sA}_n$. 

\subsection{Generalizations of floor quotient orders: $a$-floor quotient orders} \label{subsec:83}

  One can generalize the floor quotient  partial order into a family of partial orders, 
   whose existence as a non-trivial family is a consequence of the failure of scaling-invariance
  for the floor quotient order.
  Define for each $a \ge 1$
  the {\em $a$-floor quotient relation}  $\ADa$  
  by $d \ADa n$ if and only if $ad \AD an$.  
   One can show these relations define a family of approximate divisor orders
 $\sQ_a = (\NNplus, \ADa)$ for  $a \ge 1$.  
 The case $a=1$ is the floor quotient relation.
 We will  treat their properties in \cite{LagR:22b}. 
 
   A remarkable property of the family of $a$-floor quotient relations is that as $a \to \infty$
  these partial orders converge monotonically  
  to the divisor order $\sD= (\NNplus, \divides)$.
  That is, some ordered pairs $(d, n)$ are removed from the relation $d \ADa n$ as $a$ increases, but none are ever added. 
  Thus these partial orders interpolate between the floor quotient order and the divisor order. 
 The local rate of convergence can be analyzed. When restricted to intervals $\ADset_a[d, n]$,
 one can study the convergence of the $a$-floor quotient M\"{o}bius function $\muADa$ as $a$ varies
 to the divisor order M\"{o}bius function. The convergence is not monotone.

\subsection{Approximate divisor orderings interpolating towards the additive total order} 
\label{subsec:84}

The floor quotient order is in some sense ``halfway between'' the multiplicative divisor order and the additive total order on $\NNplus$. 
The $a$-floor quotient orders interpolate between the floor quotient order and the divisor order.  
Are there any interesting families of approximate divisor orders on $\NNplus$ that interpolate in the opposite direction, 
between the floor quotient order and the additive total order?

\bibliographystyle{amsplain}

\bibliography{floor-quotients-ref}

\providecommand{\bysame}{\leavevmode\hbox to3em{\hrulefill}\thinspace}
\providecommand{\MR}{\relax\ifhmode\unskip\space\fi MR }
\providecommand{\MRhref}[2]{%
  \href{http://www.ams.org/mathscinet-getitem?mr=#1}{#2}
}
\providecommand{\href}[2]{#2}
\begin{thebibliography}{10}

\bibitem{AssiG:16}
A.~Assi and P.A. Garc\'{\i}a-S\'{a}nchez, \emph{Numerical semigroups and
  applications}, RSME Springer Series, vol.~1, Springer, [Cham], 2016.
  \MR{3558713}

\bibitem{Bell:1915}
E.T. Bell, \emph{An arithmetical theory of certain numerical functions},
  University of Washington, Seattle, 1915.

\bibitem{Bell:1923}
\bysame, \emph{Euler algebra}, Trans. Amer. Math. Soc. \textbf{25} (1923),
  no.~1, 135--154. \MR{1501234}

\bibitem{Cardinal:10}
J.-P. Cardinal, \emph{Symmetric matrices related to the {M}ertens function},
  Linear Algebra Appl. \textbf{432} (2010), no.~1, 161--172. \MR{2566467}

\bibitem{CardinalO:20}
J.-P. Cardinal and M.~Overholt, \emph{A variant of {M}\"{o}bius inversion},
  Exp. Math. \textbf{29} (2020), no.~3, 247--252. \MR{4134825}

\bibitem{Dirichlet:1849}
G.L. Dirichlet, \emph{\"{U}ber die {B}estimmung der {M}ittleren {W}ert in der
  {Z}ahlentheories}, Abhandl. Akad. Wiss. (1849), 69--83.

\bibitem{Hall:36}
P.~Hall, \emph{The {E}ulerian functions of a group}, Q. J. Math. \textbf{1}
  (1936), 134--151.

\bibitem{Heyman:19}
R.~Heyman, \emph{Cardinality of a floor function set}, Integers \textbf{19}
  (2019), Paper No. A67, 7. \MR{4188747}

\bibitem{Kalmar:1931}
L.~Kalm\'{a}r, \emph{A ``factorisatio numerorum'' probl\'{e}m\'{a}j\'{a}rol},
  Mat. Fiz. Lapok \textbf{38} (1931), 1--15.

\bibitem{KlazarL:07}
M.~Klazar and F.~Luca, \emph{On the maximal order of numbers in the
  ``factorisatio numerorum'' problem}, J. Number Theory \textbf{124} (2007),
  no.~2, 470--490. \MR{2321375}

\bibitem{Koren:02}
I.~Koren, \emph{Computer arithmetic algorithms}, Prentice Hall, Inc., Englewood
  Cliffs, NJ, 1993. \MR{1206231}

\bibitem{LMR:16}
J.C. Lagarias, T.~Murayama, and D.H. Richman, \emph{Dilated floor functions
  that commute}, Amer. Math. Monthly \textbf{123} (2016), no.~10, 1033--1038.
  \MR{3593644}

\bibitem{LagR:22b}
J.C. Lagarias and D.H. Richman, \emph{The family of $a$-floor quotient partial
  orders}, in preparation.

\bibitem{Ore:62}
O.~Ore, \emph{Theory of graphs}, American Mathematical Society Colloquium
  Publications, Vol. XXXVIII, American Mathematical Society, Providence, R.I.,
  1962. \MR{0150753}

\bibitem{RamirezA:05}
J.L. Ram\'{\i}rez~Alfons\'{\i}n, \emph{The {D}iophantine {F}robenius problem},
  Oxford Lecture Series in Mathematics and its Applications, vol.~30, Oxford
  University Press, Oxford, 2005. \MR{2260521}

\bibitem{Rota:64}
G.-C. Rota, \emph{On the foundations of combinatorial theory. {I}. {T}heory of
  {M}\"{o}bius functions}, Z. Wahrscheinlichkeitstheorie und Verw. Gebiete
  \textbf{2} (1964), 340--368 (1964). \MR{174487}

\bibitem{Smith:67}
D.A. Smith, \emph{Incidence functions as generalized arithmetic functions.
  {I}}, Duke Math. J. \textbf{34} (1967), 617--633. \MR{219465}

\bibitem{Smith:69a}
\bysame, \emph{Incidence functions as generalized arithmetic functions. {II}},
  Duke Math. J. \textbf{36} (1969), 15--30. \MR{242757}

\bibitem{Smith:69b}
\bysame, \emph{Incidence functions as generalized arithmetic functions. {III}},
  Duke Math. J. \textbf{36} (1969), 353--367. \MR{242758}

\bibitem{SpiegelO:97}
E.~Spiegel and C.J. O'Donnell, \emph{Incidence algebras}, Monographs and
  Textbooks in Pure and Applied Mathematics, vol. 206, Marcel Dekker, Inc., New
  York, 1997. \MR{1445562}

\bibitem{Stanley:70}
R.P. Stanley, \emph{Structure of incidence algebras and their automorphism
  groups}, Bull. Amer. Math. Soc. \textbf{76} (1970), 1236--1239. \MR{263718}

\bibitem{Stanley:12}
\bysame, \emph{Enumerative combinatorics. {V}olume 1}, second ed., Cambridge
  Studies in Advanced Mathematics, vol.~49, Cambridge University Press,
  Cambridge, 2012. \MR{2868112}

\bibitem{Weisner:35}
L.~Weisner, \emph{Abstract theory of inversion of finite series}, Trans. Amer.
  Math. Soc. \textbf{38} (1935), no.~3, 474--484. \MR{1501822}

\bibitem{Zaslavsky:87}
T.~Zaslavsky, \emph{The {M}\"{o}bius function and the characteristic
  polynomial}, Combinatorial geometries, Encyclopedia Math. Appl., vol.~29,
  Cambridge Univ. Press, Cambridge, 1987, pp.~114--138. \MR{921071}

\end{thebibliography}

\end{document}